\let\OLDthebibliography\thebibliography
\renewcommand\thebibliography[1]{
  \OLDthebibliography{#1}
  \setlength{\parskip}{0pt}
  \setlength{\itemsep}{0pt plus 0.3ex}
}
\title[Computing cobordism maps in link Floer homology]
{Computing cobordism maps in link Floer homology and the reduced Khovanov TQFT}
\author{Andr\'as Juh\'asz}%
\address{Mathematical Institute, University of Oxford, Andrew Wiles Building,
Radcliffe Observatory Quarter, Woodstock Road, Oxford, OX2 6GG, UK}%
\email{juhasza@maths.ox.ac.uk}%
\author{Marco Marengon}%
\address{University of California, Los Angeles, Department of Mathematics,
520 Portola Plaza, Box 951555,
Los Angeles, CA 90095-1555 }%
\email{marengon@math.ucla.edu}%
\subjclass[2010]{57M27; 57R58}%
\keywords{Knot cobordism; Heegaard Floer homology; Khovanov homology; spectral sequence; TQFT}
\date{}
\newtheorem{proposition}{Proposition}[section]
\newtheorem{theorem}[proposition]{Theorem}
\newtheorem{lemma}[proposition]{Lemma}
\newtheorem{corollary}[proposition]{Corollary}
\newtheorem{conjecture}[proposition]{Conjecture}
\newtheorem{prop}[proposition]{Proposition}
\newtheorem{lem}[proposition]{Lemma}
\theoremstyle{definition}
\newtheorem{definition}[proposition]{Definition}
\newtheorem{defi}[proposition]{Definition}
\newtheorem{question}[proposition]{Question}
\newtheorem*{acknowledgements}{Acknowledgements}
\newtheorem*{organisation}{Organisation}
\theoremstyle{remark}
\newtheorem{remarkpro}[proposition]{Remark}
\newtheorem{remark}[proposition]{Remark}
\newtheorem{example}[proposition]{Example}
\newtheorem{rem}{Remark}
\begin{document}

\begin{abstract}
We study the maps induced on link Floer homology
by elementary decorated link cobordisms. We compute these for
births, deaths, stabilizations, and destabilizations,
and show that saddle cobordisms can be computed in terms of
maps in a decorated skein exact triangle that extends the oriented skein
exact triangle in knot Floer homology.
In particular, we completely determine the Alexander and Maslov grading shifts.

As a corollary, we compute the maps induced by elementary cobordisms between
unlinks. We show that
these give rise to a $(1+1)$-dimensional TQFT that coincides
with the reduced Khovanov TQFT.
Hence, when applied to the cube of resolutions of a marked link
diagram, it gives the complex defining the reduced
Khovanov homology of the knot.
Finally, we define a spectral sequence from (reduced)
Khovanov homology using these cobordism maps, and we
prove that it is an invariant of the (marked) link.
\end{abstract}

\maketitle
\thispagestyle{empty}

\section{Introduction}
This paper is devoted to developing tools for computing maps induced on link
Floer homology by decorated link cobordisms. We also exhibit a connection between
these maps and the reduced Khovanov TQFT.

Knot Floer homology, denoted by $\HFKh$, and defined independently by
Ozsv\'ath-Szab\'o~\cite{HFK1} and Rasmussen~\cite{HFK2},
is a knot invariant that categorifies the Alexander polynomial.
Khovanov~\cite{kh, redkh} introduced
a categorification of the Jones polynomial, called
\emph{Khovanov homology}, and a version of it for marked links,
called \emph{reduced Khovanov homology}.
Rasmussen~\cite{rasmussen2005knot} conjectured that there exists
a spectral sequence from reduced Khovanov homology
to knot Floer homology.

Link Floer homology, denoted by $\HFLh$,
and defined by Ozsv\'ath and Szab\'o~\cite{HFL},
is an extension of $\hat\HFK$ to links that
categorifies the multivariable Alexander polynomial.
The invariant $\HFLh$ is only functorial for decorated links
according to Dylan Thurston and the first author~\cite{naturality},
and moving the decorations around a link often induces a nontrivial automorphism of~$\HFLh$ by
the work of Sarkar~\cite{sarkar2011moving}; see also Zemke~\cite{quasistab}.
Hence, when considering link cobordisms, it is necessary to keep track of the decorations.
We say that $(X,F)$ is a link cobordism from the link $(Y_0,L_0)$ to $(Y_1,L_1)$
if $X$ is a cobordism from $Y_0$ to $Y_1$ and $F \subset X$ is a properly embedded
oriented surface with $\partial F = -L_0 \cup L_1$.
Loosely speaking, the decoration consists of a properly embedded 1-manifold $\sigma$
in the surface~$F$ that divides it into subsurfaces $R_+(\sigma)$ and~$R_-(\sigma)$.
For the precise definition, see Section~\ref{sec:decorated}.
Decorated link cobordisms functorially induce homomorphisms on link Floer homology
according to the work of the first author~\cite{cobordisms}.
In this paper we consider link cobordisms only in $X = Y \times I$ for a closed oriented
3-manifold~$Y$.

Via Morse theory, we can write every link cobordism in $Y \times I$ as a composition of
elementary link cobordisms. We define these elementary cobordisms in Section~\ref{sec:elementary},
and prove that they generate the category of decorated links in cylindrical 4-manifolds
in Proposition~\ref{prop:generation}.
In an elementary cobordism $(F,\sigma)$, the height function $h \colon Y \times I \to I$
has at most one generic critical point when restricted to $F$ and to $\sigma$,
and $\text{Crit}(h|_F) \subseteq \text{Crit}(h|_\sigma)$.
In a \emph{birth} cobordism, $h|_F$ has a local minimum.
In a \emph{saddle}, $h|_F$ has a critical point of index one and $h|_\sigma$ has a local maximum.
We distinguish \emph{merge} and \emph{split}
saddles depending on whether the number of link components decreases or increases by one, respectively.
In a \emph{death} cobordism, $h|_F$ has a local maximum.
A \emph{stabilisation} is an elementary cobordism where $h|_F$ has no critical points,
and $h|_\sigma$ has a local minimum, and a \emph{destabilisation} is when $h|_\sigma$ has a local
maximum. A (de)stabilisation is \emph{positive} (resp.~\emph{negative}) if the bigon component
of $F \setminus \sigma$ lies in $R_+(\sigma)$ (resp.~in $R_-(\sigma))$.
Finally, an \emph{isotopy} is a cobordism where $h|_F$ and $h|_\sigma$ have no critical points.
Our aim is to compute the maps induced by these elementary link cobordisms.

In Section~\ref{sec:morse}, we discuss elementary link cobordisms in full generality.
Let  $V = \F_2\langle B, T \rangle$ be the 2-dimensional bigraded vector space generated
by two homogeneous vectors $B$ (bottom-graded) and $T$ (top-graded),
where $B$ lives in Maslov grading~$-1/2$ and Alexander grading~$0$,
while $T$ lives in Maslov grading~$1/2$ and Alexander grading~$0$.
In Theorem~\ref{thm:births}, we show that if $\mc B$ is a birth cobordism
from $(L_0,P_0)$ to $(L_1,P_1)$, then there is an isomorphism
\[
\HFLh(L_1, P_1) \cong \HFLh(L_0,P_0) \otimes V
\]
such that
$F_{\mc B}(x) = x \otimes T$ for every $x \in \HFLh(L_0,P_0)$.

For a death cobordism $\mc D$ from $(L_0,P_0)$ to $(L_1,P_1)$,
we will show in Theorem~\ref{thm:deaths}
that there is an isomorphism
\[
\HFLh(L_0,P_0) \cong \HFLh(L_1,P_1) \otimes V
\]
such that $F_{\mc D}(x \otimes T) = 0$ and $F_{\mc D}(x \otimes B) = x$
for every $x \in \HFLh(L_1,P_1)$.

Now let $V' = \F_2\langle b, t \rangle$ be the 2-dimensional bigraded vector space generated
by two homogeneous vectors $b$ (bottom-graded) and $t$ (top-graded),
where $b$ lives in Maslov grading~$-1/2$ and Alexander grading~$-1/2$,
while $t$ lives in Maslov grading~$1/2$ and Alexander grading~$1/2$.
If $\mc X$ is a stabilisation from $(L,P_0)$ to $(L,P_1)$, then we will prove in Proposition~\ref{prop:stab}
that there is an isomorphism
\[
\HFLh(L,P_1) \cong \HFLh(L,P_0) \otimes V'
\]
such that $F_{\mc X}(x) = x \otimes t$ if $\mc X$ is positive and
$F_{\mc X}(x) = x \otimes b$ if $\mc X$ is negative for every
$x \in \HFLh(L,P_0)$.

We shall see in Proposition~\ref{prop:destab}
that if $\mc X$ is a destabilisation from $(L,P_0)$ to $(L,P_1)$, then there is
an isomorphism
\[
\HFLh(L,P_0) \cong \HFLh(L,P_1) \otimes V'
\]
such that $F_{\mc X}(x \otimes t) = 0$ and $F_{\mc X}(x \otimes b) = x$
if $\mc X$ is positive, and
$F_{\mc X}(x \otimes t) = x$ and $F_{\mc X}(x \otimes b) = 0$ if $\mc X$
is negative.

The maps associated to saddle link cobordisms are more complicated.
These maps consist of a contact gluing map and a special cobordism map.
We show that the special cobordism map consists of a single 4-dimensional
2-handle attachment, and prove that it coincides
with the appropriate map in the skein exact triangle of Ozsv\'ath and Szab\'o~\cite{HFK1},
or, more precisely, a decorated version of it for decorated links stated in Theorem~\ref{thm:skein}.
However, the definition of the map still involves the count of holomorphic triangles.
More precisely, we prove the following in Theorem~\ref{thm:exacttriangle}:
Suppose that we have a saddle cobordism from $(L_0,P_0)$ to $(L_+,P)$, and let
the link $(L,P)$ be as in Figure~\ref{fig:surgeryet}; i.e., obtained by adding a full right-handed twist
to~$L_+$ along the crossing disk. Then, by Theorem~\ref{thm:skein}, there is an exact triangle
\[
\dots \longrightarrow \HFLh(L,P) \stackrel{e}{\longrightarrow}
\HFLh(L_0,P_0) \stackrel{f}{\longrightarrow} \HFLh(L_+,P) \longrightarrow \dots,
\]
and the map~$f$ coincides with the cobordism map given by the saddle cobordism from~$(L_0,P_0)$ to~$(L_+,P)$.
Note that Theorem~\ref{thm:skein} is a decorated, and hence natural extension of the skein exact triangles~(7) and~(8)
of Ozsv\'ath and Szab\'o~\cite{HFK1}, and there is no need to distinguish between two cases depending on whether
the two strands of~$L_+$ intersecting the crossing disc belong to the same component or not.
Furthermore, it also applies to unoriented saddles and to multi-pointed links.
In Section~\ref{sec:OSzconstruction}, we also give a Heegaard diagrammatic description of the 2-handle map
when we have a saddle cobordism in~$S^3 \times I$.

Finally, the map induced by an \emph{isotopy} agrees with a diffeomorphism map
according to Subsection~\ref{sec:isotopy}.

In particular, in Theorem~\ref{thm:grading},
we compute the Alexander and Maslov grading shifts of any
decorated link cobordism $\mc X = (Y \times I, F,\sigma)$, when the gradings are defined.
The map $F_{\mc X}$ shifts the Maslov grading by $\chi(F)/2 + (\chi(R_+(\sigma)) - \chi(R_-(\sigma)))/2$,
and the Alexander grading by $(\chi(R_+(\sigma)) - \chi(R_-(\sigma)))/2$.
In particular, the shift in the $\delta$-grading, which is $-\chi(F)/2$,
only depends on the topology of $F$ and is independent of the decorations.
A key technical tool used for computing the grading shifts is Proposition~\ref{prop:pa},
which shows that the contact gluing map along a product annulus in a link complement
preserves both the Alexander and the Maslov gradings, when they are defined.

Zemke also defined maps induced by decorated link cobordisms in \cite{zemkefunctoriality}.
The grading shift of his maps, computed in~\cite{zemkegradings},
seem to agree with what we have obtained.

In the special case when we have an elementary cobordism between unlinks in~$S^3 \times I$,
we completely determine the induced maps on link Floer homology.
For a summary of our results, see Figure~\ref{fig:defcobordisms}.

We give a formula for the distant disjoint union of two decorated link cobordisms.
We actually develop a much more general formula that holds for connected sums of sutured
manifold cobordisms, see Theorem~\ref{thm:cs}. Using this, we can compute
maps induced by distant disjoint unions of the link cobordisms in Figure~\ref{fig:defcobordisms}.
By taking compositions and forgetting about the embedding of the link cobordisms into $S^3 \times I$,
we obtain a (1+1)-dimensional TQFT that,
surprisingly, coincides with the reduced Khovanov TQFT.
We explain this connection between link Floer homology and Khovanov homology next.

Khovanov's construction works as follows.
Given a link diagram, consider the corresponding
\emph{cube of resolutions}: Its vertices are \emph{resolutions} of~$K$,
obtained by smoothing all the crossings of the diagram,
and its edges correspond to pair-of-pants cobordisms between
resolutions that differ only at a single crossing.

Let $V$ be a $2$-dimensional vector space generated by two
vectors $v_+$ and $v_-$. (We are using Bar-Natan's
notation from \cite{bar2002khovanov}. Use the identification $v_- \mapsto X$
and $v_+ \mapsto \mathbf1$ to recover Khovanov's notation.) Khovanov associates
to every resolution of~$K$ the vector space $V^{\otimes n}$,
where $n$ is the number of components of the resolution,
and to every elementary cobordism a linear map. Khovanov's cobordism maps
are of two kinds: There is a map for the pair-of-pants
cobordism that merges two components, and a map for the
pair-of-pants cobordism that splits a component into two.
Such cobordism maps are given by a $(1+1)$-dimensional
TQFT, and they turn the vector
space obtained by direct summing the vector spaces
associated to all resolutions into a chain complex.
The homology of this complex is called
Khovanov homology.

The reduced version of Khovanov homology is obtained by
putting a basepoint on the knot. Consequently,
all the resolutions have a marked component.
By quotienting each vector space $V^{\otimes n}$ of the
complex by $\langle v_- \rangle \otimes V^{\otimes (n-1)}$,
where the first factor $V$ is associated to the
marked component of the resolution, one obtains a new complex
that defines \emph{reduced Khovanov homology}, which is
again an invariant of the knot.

We build a $(1+1)$-dimensional TQFT as follows:
We associate to each unlink $U_n$ the $\F_2$ vector space
$\HFLh(U_n)$, and to every cobordism the cobordism map in link
Floer homology. Our main result linking $\HFLh$ and Khovanov homology
is that the $(1+1)$-dimensional TQFT induced by $\HFLh$ is the same as
Khovanov's reduced TQFT; see Theorem~\ref{thm:triangle}.
Consequently, the homology of the complex obtained from
a cube of resolutions for a knot $K$ by applying the
functor $\HFLh$ is the reduced Khovanov homology of $K$ with $\F_2$
coefficients.

To prove Theorem~\ref{thm:triangle}, we need to overcome the following obstacle.
The cobordism maps obtained from the disjoint union formula
(i.e.; Theorem~\ref{thm:cs}) are expressed in some inconvenient basis of the
link Floer homologies.
Therefore, we actually need to fix a canonical basis of~$\HFLh(U_n)$,
which is achieved by marking a component of $U_n$.
We then express the cobordism maps that we have computed in
this basis, and we find that they are exactly the same as
Khovanov's maps induced by the same cobordisms.
This concludes the proof of Theorem~\ref{thm:triangle}.

The last section of the paper is devoted to a side topic:
We define a spectral sequence from Khovanov homology
and an analogous one from reduced Khovanov homology that
are obtained by putting a filtration on the Khovanov and
reduced Khovanov complexes, and by endowing them with higher differentials
coming from the cobordism maps. We call these filtered complexes
the \emph{Khovanov filtered complex} and the \emph{reduced Khovanov filtered
complex} associated to a link diagram $D$. By adjusting Bar-Natan's proof on the invariance of
Khovanov homology~\cite{bar2002khovanov}, we prove that the
spectral sequence defined by the (reduced) Khovanov filtered complex
is an invariant of the (marked) link up to isomorphism; see Theorem~\ref{thm:ss}.

This, in particular, implies that all the pages, which are bigraded $\F_2$ vector
spaces $\Kh_{p,q}^r$ (or $\rKh_{p,q}^r$), are (marked) link invariants.
We are aware that Baldwin, Hedden, and Lobb have also studied these spectral sequences.
It is an instance of a Khovanov-Floer theory, and hence natural and
functorial under link cobordisms.

The limits of the spectral sequences above are unknown.
Baldwin and Lobb made some computations to
understand the behaviour of the limit, but could not find any
non-trivial examples. This led them to conjecture that the
spectral sequence always collapses on the second page.

\begin{conjecture}[{Baldwin-Lobb}]
\label{conj}
The limit $\Kh_{p,q}^\infty$ of the spectral sequence is the
ordinary Khovanov homology $\Kh_{p,q}=\Kh_{p,q}^2$.
\end{conjecture}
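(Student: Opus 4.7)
The plan is to show that the spectral sequence collapses at the $E_2$ page by proving that every higher differential $d_r$ for $r \geq 2$ vanishes. By Theorem~\ref{thm:main} the $E_1$ differential recovers Khovanov's differential, so $E_2$ is already ordinary Khovanov homology, and what must be killed are the higher-order corrections coming from iterated link-Floer cobordism maps encoded in the filtered complex.

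First I would analyse each $d_r$ as an explicit natural transformation. The Khovanov-Floer structure guarantees that $d_r$ on $E_r$ is functorial under link cobordisms and is assembled from compositions of the elementary cobordism maps, which are completely determined by the formulas for births, deaths, (de)stabilisations in Propositions~\ref{prop:birth}, \ref{prop:death}, \ref{prop:stab}, \ref{prop:destab}, and for saddles via the decorated skein triangle of Theorem~\ref{thm:exacttriangle}. Writing $d_r$ as such a composition should reduce the problem to a finite combinatorial identity per cube of resolutions.

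Next I would bring in the grading constraints of Theorem~\ref{thm:grading}. The $\delta$-grading shift of any decorated cobordism map is $-\chi(F)/2$, depending only on the topology of $F$ and not on the decorations; combined with the bigrading of the spectral sequence, this should force the targets of the higher differentials to lie in bidegrees that are empty for grading reasons in many cases. One natural path is to bootstrap from thin knots, where the diagonal support of $\Kh$ kills the higher differentials by grading alone, to arbitrary knots by invoking functoriality under saddle cobordisms and the fact that thin knots generate (under cobordism) a rich enough class.

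The hard part, and presumably the reason Baldwin and Lobb themselves have not settled the conjecture despite extensive computations, is the thick case: for knots whose $\Kh$ occupies a wide range of $\delta$-gradings, grading alone does not constrain $d_r$ to zero, and a genuine algebraic input is required. I expect a full proof would rest on a formality statement for the (reduced) Khovanov filtered complex, asserting that as an $A_\infty$- or dg-structure over the natural cobordism operad built from the maps of this paper it is quasi-isomorphic to its cohomology. Establishing such formality link-Floer-theoretically is well beyond what the present paper provides, and is the principal obstacle I would anticipate.
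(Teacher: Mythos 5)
There is no proof of this statement to compare against: Conjecture~\ref{conj} is stated in the paper as an \emph{open} conjecture of Baldwin and Lobb, supported only by their computer experiments and by the elementary observation that the spectral sequence collapses for $\Kh$-thin links, where the $E^2$ page is supported on two diagonals and higher differentials vanish for grading reasons. The paper proves invariance of the spectral sequence (Theorem~\ref{thm:ss}) and identifies its $E^2$ page with (reduced) Khovanov homology via Theorem~\ref{thm:main}, but it makes no claim to settle the collapse. So your task here was not to reconstruct a proof the paper contains, and your proposal should not present itself as a route to one.

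Your proposal also does not constitute a proof, and you acknowledge as much, but the gaps are worth naming precisely. First, the grading input you invoke is misapplied: Theorem~\ref{thm:grading} computes the $\delta$-grading shift of a single cobordism map on $\hat\HFL$, whereas the higher differentials $d_r$ shift the cube (filtration) grading by $r$ while preserving the quantum grading; this is exactly the constraint that kills $d_r$ for thin links and it gives nothing new beyond that case. Second, the proposed bootstrap from thin knots to arbitrary knots via functoriality under saddle cobordisms is not an argument: a cobordism induces a morphism of spectral sequences, which constrains nothing about the vanishing of differentials on the target unless the induced maps are injective or surjective in the relevant bidegrees, and there is no sense established (here or elsewhere) in which thin knots ``generate'' all knots so as to control higher differentials. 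Third, the decisive step you defer to --- a formality statement for the filtered complex over a cobordism-map operad --- is precisely the content of the conjecture in disguise; deferring it means the proposal reduces the conjecture to an equivalent (or stronger) unproved statement rather than proving it. As it stands, the statement remains open, and your outline should be read as a description of the obstructions rather than a proof strategy with a plausible completion.
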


\begin{organisation}
In Section~\ref{sec:preliminaries}, we recall the definitions of
sutured Floer homology and of the cobordism maps associated to
decorated link cobordisms.
In Section~\ref{sec:morse-pa}, we prove that the contact gluing map along
a product annulus in a link complement is an isomorphism (Lemma~\ref{lem:PhiAiso}),
and that it preserves the Alexander and Maslov gradings when they are defined (Proposition~\ref{lem:PhiAiso}).
In Section~\ref{sec:skein}, we prove the decorated skein exact triangle of Theorem~\ref{thm:skein}.
In Section~\ref{sec:morse}, we study the maps induced on $\HFLh$
by elementary link cobordisms.
In Section~\ref{sec:computations}, we compute the maps induced on~$\HFLh$
by the pair-of-pants and birth and death cobordisms in Figure~\ref{fig:defcobordisms}; see Theorem~\ref{thm:unlinkcob}.
In Section~\ref{sec:disjointunionformula}, we define the disjoint union
of two decorated link cobordisms, and we study the behaviour of the
cobordism maps under disjoint union. In Theorem~\ref{thm:cs},
we prove a formula for a more general case of connected sums of sutured manifold cobordisms.
In Section~\ref{sec:TQFT}, we define a canonical basis for the link
Floer homology of a marked unlink in~$S^3$. The rest of the section
is devoted to expressing in this canonical basis the TQFT that arises
by applying the functor~$\HFLh$ to cobordisms between unlinks
that are disjoint unions of identity cobordisms and cobordisms from
Figure~\ref{fig:defcobordisms}. By comparing the $(1+1)$-dimensional
TQFT that we obtain with Khovanov's TQFT, we prove Theorem~\ref{thm:triangle}.
Lastly, Section~\ref{sec:ss} is devoted to the definition and the proof
of invariance of the spectral sequence from Khovanov homology defined
using the cobordism maps on $\HFLh$; see Theorem~\ref{thm:ss}.
We conclude the section with some remarks
about the behaviour of the spectral sequence.
\end{organisation}

\begin{acknowledgements}
We thank John Baldwin, Matt Hedden, Tom Hockenhull, Joan Licata, Andrew Lobb,
Ciprian Manolescu, Tom Mrowka, Jacob Rasmussen, Ian Zemke,
and the anonymous referee for their comments and suggestions.

This project has received funding from the European Research Council (ERC) under the European
Union's Horizon 2020 research and innovation programme (grant agreement No.~674978).
The first author was supported by a Royal Society Research Fellowship.
The second author was supported by an EPSRC Doctoral Training Award
and LMS grant PMG 16-17 07.
The first author would also like to thank the Isaac Newton Institute for its hospitality.
\end{acknowledgements} 

\section{Preliminaries}
\label{sec:preliminaries}
Throughout this paper, $\F_2$ denotes the field with two
elements. We will make extensive use of sutured Floer homology,
so we give a short introduction in the following subsection.
As it generalises the hat flavour of link Floer homology,
it provides us with suitable tools for studying link cobordisms.


\subsection{Sutured Floer homology and cobordism maps}

Sutured Floer homology \cite{SFH} is a module over~$\mb F_2$ associated to a
balanced sutured manifold. Thus, we first review what a (balanced) sutured manifold is.

\begin{defi}[{\cite[Definition 2.6]{suturedmanifolds}}]
A \emph{sutured manifold} is a compact oriented $3$-manifold $M$ with boundary,
together with a set $\gamma \subseteq \de M$ of pairwise disjoint
annuli~$A(\gamma)$ and tori~$T(\gamma)$. Furthermore, the interior of each component
of $A(\gamma)$ contains a homologically non-trivial oriented simple closed curve,
called a \emph{suture}. The union of the sutures is denoted by $s(\gamma)$.

Finally, every component of $R(\gamma)=\de M \setminus \Int(\gamma)$ is oriented
in such a way that $\de R(\gamma)$ is coherent with the sutures. Let $R_+(\gamma)$
(resp.~$R_-(\gamma)$) denote the components of $R(\gamma)$ where the normal vector
points outwards (resp.~inwards).
\end{defi}

In this paper, we will only consider sutured manifolds $(M,\gamma)$ where $T(\gamma) = \emptyset$.

\begin{defi}[{\cite[Definition 2.2]{SFH}}]
We say that a sutured manifold $(M,\gamma)$ is \emph{balanced} if
$M$ has no closed components, $\chi(R_+(\gamma))=\chi(R_-(\gamma))$,
and the map $\pi_0(A(\gamma))\to\pi_0(\de M)$ is surjective.
\end{defi}

We will sometimes view $\gamma$ as a ``thickened'' oriented 1-manifold,
so often we do not distinguish between $\gamma$ and $s(\gamma)$.
It shall be clear from the context which one we mean.

The sutured Floer homology~\cite{SFH} of a balanced sutured manifold $(M, \gamma)$
is a finite dimensional $\mb F_2$ vector space $\SFH(M,\gamma)$.
Similarly to Heegaard Floer homology, sutured Floer homology admits a splitting along
relative $\SpinC$ structures on $M$. These are defined in
\cite[Section 4]{SFH}, and form an affine space $\SpinC(M,\gamma)$ over $\H^2(M,\de M)$.
For each $\mf s \in \SpinC(M,\gamma)$, we have an invariant $\SFH(M,\gamma,\mf s)$,
such that
\[\SFH(M,\gamma) = \bigoplus_{\mf s \in \SpinC(M,\gamma)} \SFH(M,\gamma, \mf s).\]

We briefly recall the definition of relative $\SpinC$ structures.

\begin{definition}[{\cite[Definition 3.1]{cobordisms}}]
\label{def:spincMg}
Given a sutured manifold $(M, \gamma)$, we say that a vector field $v$ defined on a subset of $M$ containing $\de M$ is \emph{admissible} if it is nowhere vanishing, it points into $M$ along $R_-(\gamma)$, it points out of $M$ along $R_+(\gamma)$, and $v|_\gamma$ is tangent to $\de M$ and either points into $R_+(\gamma)$ or is positively tangent to $\gamma$ (we think of $\de M$ as a smooth surface, and of $\gamma$ as a 1-manifold).

Let $v$ and $w$ be admissible vector fields on $M$. We say that $v$ and $w$ are \emph{homologous}, and we write $v \sim w$, if there is a collection of balls $B \subseteq M$, one in each component of $M$, such that $v$ and $w$ are homotopic on $M \setminus B$ through admissible vector fields.
Then the set of \emph{relative $\SpinC$ structures}, denoted by $\SpinC(M, \gamma)$, is the set of homology classes of admissible vector fields on $M$.
\end{definition}

If $(M,\gamma)$ is balanced, $\SpinC(M,\gamma)$ is an affine space over $H^2(M,\de M)$. We will denote relative $\SpinC$ structures by $\s$, to distinguish them from ordinary $\SpinC$ structures on oriented $3$-manifolds,
which we will usually denote by~$\mf t$.

\begin{rem} \label{rem:v0}
Let $v_0$ be a fixed vector field on $\de M$ arising as $v|_{\de M}$ for some admissible vector field~$v$ on~$M$. We define $\text{Spin}^c_{v_0}(M,\gamma)$ as the set of nowhere vanishing vector fields on $M$ that restrict to $v_0$ on $\de M$, up to isotopy in the complement of a collection of balls through such vector fields.
Since the space of all possible $v_0$ is contractible, $\text{Spin}^c_{v_0}(M,\gamma)$ can be canonically identified with $\SpinC(M,\gamma)$. This was the approach taken in \cite{SFH}.
\end{rem}

In \cite{cobordisms}, the first author defined a map induced on $\SFH$
by a cobordism of balanced sutured manifolds. We briefly recall
the definition.

\begin{definition}[{\cite[Definition 2.3]{cobordisms}}]
Let $(M, \gamma)$ be a sutured manifold, and suppose that $\xi_0$
and $\xi_1$ are contact structures on $M$ such that $\de M$ is a
convex surface with dividing set $\gamma$ with respect to both
$\xi_0$ and $\xi_1$. Then we say that $\xi_0$ and $\xi_1$ are
\emph{equivalent} if there is a one-parameter family
$\left\{\,\xi_t \,\colon\,\, t \in I \,\right\}$ of contact structures
such that $\de M$ is convex with dividing set $\gamma$ with respect
to $\xi_t$ for every $t \in I$. In this case, we write $\xi_0 \sim \xi_1$,
and we denote by $[\xi]$ the equivalence class of a contact structure $\xi$.
\end{definition}

\begin{definition}[{\cite[Definition 2.4]{cobordisms}}]
Let $(M_0, \gamma_0)$ and $(M_1, \gamma_1)$ be sutured manifolds.
A \emph{cobordism} from $(M_0,\gamma_0)$ to $(M_1, \gamma_1)$ is
a triple $\mc W=(W, Z, [\xi])$, where
\begin{itemize}
\item{$W$ is a compact oriented $4$-manifold with boundary,}
\item{$Z \subseteq \de W$ is a compact, codimension-$0$ submanifold with boundary
of $\de W$ such that $\de W \setminus \Int(Z) = -M_0 \sqcup M_1$,}
\item{$\xi$ is a positive contact structure on $Z$ such that $\de Z$ is
a convex surface with dividing set $\gamma_i$ on $\de M_i$ for $i \in \{0,1\}$.}
\end{itemize}
A cobordism is called \emph{balanced} if both $(M_0, \gamma_0)$ and $(M_1, \gamma_1)$
are balanced.
\end{definition}

\begin{definition}[{\cite[Definition 2.7]{cobordisms}}]
Two cobordisms $\mc W=(W, Z, [\xi])$ and $\mc W'=(W', Z', [\xi'])$ from $(M_0, \gamma_0)$
to $(M_1, \gamma_1)$ are called \emph{equivalent} if there is an
orientation preserving diffeomorphism $\vphi: W \to W'$ such that $d(Z)=Z'$,
$d_*(\xi)=\xi'$, and $d|_{M_0 \cup M_1}=\id$.
\end{definition}

\begin{definition}[{\cite[Definition 10.4]{cobordisms}}]
A cobordism $\mc W = (W, Z, [\xi])$ from $(M_0, \gamma_0)$ to
$(N, \gamma_1)$ is called a \emph{boundary cobordism} if
\begin{itemize}
\item $\mc W$ is balanced,
\item $N$ is parallel to $M_0 \cup (-Z)$,
\item we are given a deformation retraction $r \colon W \times [0,1] \to M_0 \cup (-Z)$
such that $r_0|_W = \id_W$ and $r_1|_N$ is an orientation preserving
diffeomorphism from $N$ to $M_0 \cup (-Z)$.
\end{itemize}
\end{definition}

\begin{definition}[{\cite[Definition 5.1]{cobordisms}}]
We say that a cobordism $\mc W = (W, Z, [\xi])$ from $(M_0, \gamma_0)$
to $(M_1, \gamma_1)$ is \emph{special} if
\begin{itemize}
\item $\mc W$ is balanced,
\item $\de M_0 = \de M_1$, and $Z = \de M_0 \times I$ is the trivial
cobordism between them,
\item $\xi$ is an $I$-invariant contact structure on $Z$ such that each
$\de M_0 \times \left\{t\right\}$ is a convex surface with dividing set
$\gamma_0 \times \left\{t\right\}$ for every $t \in I$ with respect to
the contact vector field $\de/\de t$.
\end{itemize}
In particular, it follows from the last condition that $\gamma_0 = \gamma_1$.
\end{definition}

\begin{remarkpro}
\label{rem:decomposition}
According to \cite[Definition 10.1]{cobordisms}, every sutured manifold cobordism
can be seen as the composition of a boundary cobordism and a special cobordism.
Let $\mc W = (W,Z,[\xi])$ be a balanced cobordism from $(M_0, \gamma_0)$
to $(M_1, \gamma_1)$. Let $(N, \gamma_1)$ be the sutured manifold
$(M_0 \cup (-Z), \gamma_1)$. Then we can think of the cobordism $\mc W$
as a composition $\Ws \circ \Wb$, where $\Wb$ is a boundary cobordism
from $(M_0, \gamma_0)$ to $(N, \gamma_1)$ and $\Ws$ is a special cobordism
from $(N, \gamma_1)$ to $(M_1, \gamma_1)$.
\end{remarkpro}

By~\cite{cobordisms}, every balanced cobordism~$\mc W$ from
$(M_0,\gamma_0)$ to $(M_1,\gamma_1)$ induces a linear map from
$\SFH(M_0,\gamma_0)$ to $\SFH(M_1, \gamma_1)$ that is independent of the
equivalence class of~$\mc W$. As in Remark~\ref{rem:decomposition},
choose a decomposition $\Ws \circ \Wb$ of~$\mc W$.
By~\cite[Section~5]{cobordisms}, the special cobordism
$\Ws$ naturally yields a map
\[
F_{\mc W^s} : \SFH(N, \gamma_1) \to \SFH(M_1, \gamma_1).
\]
This is constructed via composing maps associated to handle attachments,
similarly to the case of cobordisms of closed 3-manifolds~\cite{holomorphictriangles}.

If every component of $N \setminus {M_0}$ intersects $\de N$,
then, by \cite[Theorem~1.1]{gluingmap}, there is a contact gluing map
\[
\Phi_{-\xi}: \SFH(M_0, \gamma_0) \to \SFH(N, \gamma_1),
\]
associated to the inclusion of the sutured manifold $(-M_0, -\gamma_0) \subseteq (-N, -\gamma_1)$
and the contact structure $\xi$ on $(-N) \setminus \mbox{Int}{(-M_0)}$.
The map associated to the cobordism~$\mc W$ is then defined as
\[
F_\mc W = F_{\mc W^s} \circ \Phi_{-\xi}: \SFH(M_0,\gamma_0) \to \SFH(M_1, \gamma_1).
\]
The map $\Phi_{-\xi}$ is usually called the \emph{gluing map}, whereas
the map $F_{\mc W^s}$ is usually called the \emph{handle attachment map},
\emph{surgery map}, or \emph{special cobordism map}.

Recall that we supposed that there are no components of $N\setminus {M_0}$
that do not meet $\de N$; these are called \emph{isolated components}.
This requirement was needed for the definition of the gluing map, and
also because otherwise $N$ might not be balanced.
In the presence of isolated components, the map associated to
a cobordism $\mc W$ is defined as follows: Remove a standard contact ball $B^3$
(i.e.; a ball with a single suture~$\gamma_{B^3}$ on the boundary) from each isolated
component of $N$, and add them to $M_1$. What we get is a cobordism $\mc W'$
from $(M_0, \gamma_0)$ to $\left(M_1 \sqcup \coprod B^3, \gamma_1 \sqcup \coprod \gamma_{B^3}\right)$
with no isolated components. By composing the already defined map
\[F_{\mc W'}: \SFH(M_0, \gamma_0) \to \SFH \left(M_1 \sqcup \coprod B^3, \gamma_1 \sqcup \coprod \gamma_{B^3} \right)\]
with the natural isomorphism
\[
\SFH \left(M_1 \sqcup \coprod B^3, \gamma_1 \sqcup \coprod \gamma_{B^3} \right) \xrightarrow{\sim}
\SFH(M_1, \gamma_1) \otimes \bigotimes \mathbb{F}_2
\xrightarrow{\sim} \SFH(M_1, \gamma_1),
\]
we obtain the cobordism map $F_{\mc W}$.

The main properties of the cobordism maps are summarised
in the following theorem.
Here $\BSut$ denotes the category of balanced sutured manifolds
and equivalence classes of cobordisms, whereas $\Vect$ denotes
the category of $\mb F_2$ vector spaces.

\begin{theorem}[{\cite[Theorem 11.11]{cobordisms}}]
Sutured Floer homology, together with the above cobordism maps,
defines a functor \emph{$\SFH \colon \BSut \to \Vect$} that is a (3+1)-dimensional
TQFT in the sense of Atiyah~\cite{atiyah1988topological}
and Blanchet-Turaev~\cite{blanchet2006axiomatic}.
\end{theorem}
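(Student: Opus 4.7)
The plan is to verify Atiyah's TQFT axioms: that $\SFH$ extends to a functor from $\BSut$ to $\Vect$, and that this functor is monoidal with respect to disjoint union and involutive under orientation reversal. Most of the work is in establishing functoriality, so I would first prove well-definedness of $F_{\mc W}$ on the equivalence class of $\mc W$ and its independence from the decomposition $\mc W = \Ws \circ \Wb$ of Remark~\ref{rem:decomposition}. The gluing map $\Phi_\xi$ depends only on the equivalence class $[\xi]$ by the main theorem of \cite{gluingmap}. For the special cobordism map $F_{\Ws}$, I would adapt the closed-manifold approach of Ozsv\'ath-Szab\'o \cite{holomorphictriangles}: choose a relative handle decomposition of $\Ws$, define $i$-handle maps via admissible sutured Heegaard triples, and verify invariance under handle slides, cancellations, stabilizations of the Heegaard data, and isotopies of the attaching data. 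Equivalence of two decompositions $\Ws \circ \Wb$ of the same $\mc W$ reduces to an isotopy argument on the position of the intermediate sutured manifold $(N, \gamma_1)$. For cobordisms with isolated components, independence from the standard-ball trick follows from naturality of the canonical isomorphism $\SFH(M \sqcup B^3, \gamma \sqcup \gamma_{B^3}) \cong \SFH(M, \gamma)$. The identity axiom is then immediate: the trivial cobordism $(M \times I, \gamma \times I, [\xi_{\mathrm{triv}}])$ decomposes with $N = M$ so that both $\Phi_\xi$ and $F_{\Ws}$ are the identity.

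The main obstacle is composition. Given composable cobordisms $\mc W_1$ and $\mc W_2$ with decompositions $\Ws_i \circ \Wb_i$, one must prove the identity
\[
F_{\mc W_2 \circ \mc W_1} = F_{\Ws_2} \circ \Phi_{\xi_2} \circ F_{\Ws_1} \circ \Phi_{\xi_1}.
\]
The crux is an exchange argument showing that $\Phi_{\xi_2}$ commutes with $F_{\Ws_1}$ in an appropriate sense. The key observation is that the handles of $\Ws_1$ are attached in the interior of the cobordism, away from the contact region that $\Wb_2$ glues on; hence any admissible sutured Heegaard triple computing $F_{\Ws_1}$ can be extended across the contact gluing, and the surgery map intertwines with $\Phi_{\xi_2}$. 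Combined with a composition law for gluing maps along compatible contact partial open books, and the standard functoriality of handle attachment maps under composition of handle decompositions, this yields functoriality.

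Finally, the monoidal structure required for a TQFT is a special case of Theorem~\ref{thm:cs}: $\SFH$ converts disjoint unions of balanced sutured manifolds into tensor products of $\F_2$-vector spaces, and the cobordism maps are compatible with this since disjoint handle attachments and disjoint contact gluings act blockwise. Involutivity under orientation reversal follows from the conjugation symmetry of sutured Floer homology at the chain level, obtained by swapping $\alpha$ and $\beta$ curves in a balanced Heegaard diagram, which respects both the contact gluing maps and the triangle counts defining handle attachment maps. Together these verify the axioms in the sense of \cite{atiyah1988topological, blanchet2006axiomatic}.
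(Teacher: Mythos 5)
You should first be aware that this paper contains no proof of this statement: it is imported verbatim from \cite[Theorem~11.12]{cobordisms}, where the proof occupies a substantial part of that paper (well-definedness of the handle-attachment maps, commutation of the Honda--Kazez--Mati\'c gluing maps with surgery maps, independence of the splitting of Remark~\ref{rem:decomposition}, and the composition law). Your outline follows the same broad strategy as that reference, but as written it defers exactly the hard content -- the ``exchange argument'' showing that $\Phi_{\xi_2}$ intertwines with $F_{\Ws_1}$, and the independence of $F_{\Ws}$ from the relative handle decomposition -- which is the bulk of the actual proof, so the proposal is a plan rather than a proof.

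Two specific steps are genuinely problematic. First, you cannot obtain the multiplicativity axiom as ``a special case of Theorem~\ref{thm:cs}'': that theorem is a connected-sum formula along a framed pair of arcs requiring a bouquet, not a disjoint-union statement, and it is proved later in this paper \emph{using} the functoriality and multiplicativity of $\SFH$ from \cite{cobordisms} (multiplicativity is Axiom~(4) of the cited theorem), so the argument would be circular; the correct route is to verify directly that a product Heegaard diagram realizes $\SFH(M \sqcup M') \cong \SFH(M) \otimes \SFH(M')$ and that gluing maps and handle maps for split data act blockwise. Second, your involutivity claim that conjugation symmetry ``respects both the contact gluing maps and the triangle counts'' asserts morphism-level duality, which is only known for special cobordisms (\cite[Theorem~11.9]{cobordisms}); whether it holds for general sutured cobordisms, equivalently for the gluing maps, is precisely \cite[Question~11.10]{cobordisms}, revisited in this paper as Question~\ref{qn:duality}. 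The axiom actually needed (and proved in the reference) is involutivity at the level of objects, $\SFH(-M,-\gamma) \cong \SFH(M,\gamma)^*$, obtained from reversing the orientation of the Heegaard surface; you should restrict your claim to that and not assume compatibility with the gluing maps.
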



\subsection{Decorated link cobordisms}
\label{sec:decorated}

Sutured Floer homology is particularly well-suited to defining maps
induced on link Floer homology by decorated link cobordisms. We recall
the necessary definitions, starting with reviewing the real
blow-up construction.

\begin{definition}
Suppose that $M$ is a smooth manifold, and let $L \subset M$ be a properly embedded submanifold.
For every $p \in L$, let $N_pL = T_pM/T_pL$ be the fiber of the normal bundle of $L$ over $p$,
and let $UN_pL = (N_pL \setminus \{0\})/\R_+$ be the fiber of the unit normal bundle of $L$ over $p$.
Then the \emph{(spherical) blowup} of $M$ along $L$, denoted by $\Bl_L(M)$, is a manifold with
boundary obtained from $M$ by replacing each point
$p \in L$ by $UN_pL$. There is a natural projection $\Bl_L(M) \to M$. For further details, see
Arone and Kankaanrinta~\cite{AK}.
\end{definition}

We now review decorated links.

\begin{definition}[{\cite[Definition~4.4]{cobordisms}}]
\label{def:dl}
A \emph{decorated link} is a triple $(Y,L,P)$, where $L$ is a
non-empty link in the connected oriented 3-manifold $Y$, and
$P \subset L$ is a finite set of points. We require that, for
every component~$L_0$ of~$L$, the number $|L_0 \cap P|$ is positive and even.
Furthermore, we are given a decomposition of~$L$ into compact
$1$-manifolds~$R_+(P)$ and~$R_-(P)$ such that $R_+(P) \cap R_-(P) = P$.

We can canonically assign a balanced sutured manifold
$Y(L,P) = (M,\gamma)$ to every decorated link $(Y,L,P)$, as follows.
Let $M = \Bl_L(Y)$ and $\gamma = \bigcup_{p \in P} UN_pL$. Furthermore,
\[
R_\pm(\gamma) := \bigcup_{x \in R_\pm(P)} UN_xL,
\]
oriented as $\pm \partial M$, and we orient~$\gamma$ as~$\partial R_+(\gamma)$. Note that $Y(L,P)$ is independent of the orientation of $L$.
\end{definition}

\begin{definition}[{\cite[Definition~4.2]{cobordisms}}]
A \emph{surface with divides} $(S,\sigma)$
is a compact orientable surface~$S$, possibly with boundary, together with a properly embedded $1$-manifold~$\sigma$
that divides~$S$ into two compact subsurfaces that meet along~$\sigma$.
\end{definition}

\begin{definition}[{\cite[Definition~4.1]{cobordisms}}]
For $i \in \{0,1\}$, let $Y_i$ be a connected, oriented 3-manifold, and let $L_i$ be a non-empty link in~$Y_i$.
Then a \emph{link cobordism} from $(Y_0,L_0)$ to $(Y_1,L_1)$ is a pair~$(X,F)$, where
\begin{enumerate}
\item $X$ is a connected, orientable cobordism from $Y_0$ to $Y_1$,
\item $F$ is a properly embedded, compact, oriented surface in $X$,
\item $\partial F = L_0 \cup L_1$.
\end{enumerate}
\end{definition}

We are now ready to define decorated link cobordisms.

\begin{definition}[{\cite[Definition~4.5]{cobordisms}}]
\label{def:dlc}
We say that the triple $\mc X = (X,F,\sigma)$ is a \emph{decorated link cobordism} from $(Y_0,L_0,P_0)$ to $(Y_1,L_1,P_1)$ if
\begin{enumerate}
\item $(X,F)$ is a link cobordism from $(Y_0,L_0)$ to $(Y_1,L_1)$,
\item $(F,\sigma)$ is a surface with divides such that the map
\[
\pi_0(\partial\sigma) \to \pi_0((L_0 \setminus P_0) \cup (L_1 \setminus P_1))
\]
is a bijection,
\item 
we can orient each component~$R$ of~$F \setminus \sigma$ such that whenever~$\partial\closure{R}$
crosses a point of~$P_0$, it goes from~$R_+(P_0)$ to~$R_-(P_0)$,
and whenever it crosses a point of~$P_1$, it goes from~$R_-(P_1)$ to~$R_+(P_1)$,
\item if $F_0$ is a closed component of $F$, then $\sigma \cap F_0 \neq \emptyset$.
\end{enumerate}
Furthermore, we say that two decorated link cobordisms $\mc X = (X, F, \sigma)$
and $\mc X'=(X', F', \sigma')$ from $(Y_0,L_0,P_0)$ to $(Y_1,L_1,P_1)$ are \emph{equivalent}
if there is an orientation-preserving diffeomorphism $\varphi: X \to X'$
such that $\varphi(F) = F'$, $\varphi(\sigma)=\sigma'$, $\varphi|_{Y_0}=\id_{Y_0}$ and $\varphi|_{Y_1}=\id_{Y_1}$.
\end{definition}

Decorated links and equivalence classes of decorated link cobordisms
form a category that is denoted by $\DLink$.
Note that we do not require the surface $F$ to be oriented, just orientable;
see Section~\ref{sec:origrad}.

According to \cite[Definition~4.9]{cobordisms}, there is a natural sutured manifold cobordism
complementary to a decorated link cobordism whose construction we recall in the next definition.
For this purpose, we first discuss $S^1$-invariant contact structures on circle bundles;
see also~\cite[Section~4]{cobordisms}.
Let $\pi \colon M \to F$ be a principal circle bundle over a compact oriented surface~$F$.
An $S^1$-invariant contact structure $\xi$ on $M$ determines a dividing set $\sigma$ on the
base $F$, by requiring that $x \in \sigma$ if and only if $\xi$ is tangent to $\pi^{-1}(x)$,
and a splitting of $F$ as $R_+(\sigma) \cup R_-(\sigma)$. The image of any local
section of~$\pi$ is a convex surface with dividing set projecting onto~$\sigma$.
In the opposite direction, according to Lutz~\cite{Lutz} and Honda~\cite[Theorem~2.11 and Section~4]{Ko},
given a dividing set $\sigma$ on $F$ that intersects each component of $F$
non-trivially and divides $F$ into subsurfaces $R_+(\sigma)$ and $R_-(\sigma)$,
there is a unique $S^1$-invariant contact structure~$\xi_\sigma$ on $M$, up to isotopy,
such that the dividing set associated to $\xi_\sigma$ is exactly~$\sigma$,
the coorientation of~$\xi_\sigma$ induces the splitting~$R_\pm(\sigma)$, and the
boundary~$\de M$ is convex.

\begin{definition}[{\cite[Definition 4.9]{cobordisms}}] \label{def:linkcob}
Let $(X,F,\sigma)$ be a decorated link cobordism from $(Y_0,L_0,P_0)$ to~$(Y_1,L_1,P_1)$.
Then we define the sutured cobordism $\W = \W(X,F,\sigma)$ from $Y_0(L_0, P_0)$ to $Y_1(L_1, P_1)$ as follows.
Choose an arbitrary splitting of~$F$ into~$R_+(\sigma)$ and~$R_-(\sigma)$ such that $R_+(\sigma) \cap R_-(\sigma) = \sigma$,
and orient~$F$ such that~$\partial R_+(\sigma)$ (with $R_+(\sigma)$ oriented as a subsurface of $F$)
crosses~$P_0$ from~$R_+(P_0)$ to~$R_-(P_0)$ and~$P_1$ from~$R_-(P_1)$ to~$R_+(P_1)$.
Then~$\W$ is defined to be the triple $(W,Z,[\xi])$, where $W = \Bl_F(X)$ and $Z = UNF$, oriented
as a submanifold of~$\partial W$, finally $\xi = \xi_{\sigma}$ is an $S^1$-invariant contact structure
with dividing set~$\sigma$ on~$F$ and convex boundary~$\partial Z$ with dividing set projecting to~$P_0 \cup P_1$.
\end{definition}

The contact structure $\xi$ is independent of the splitting of~$F$ into~$R_+(\sigma)$ and~$R_-(\sigma)$.
According to \cite[Proposition~4.10]{cobordisms}, if $(X, F, \sigma)$ and
$(X', F', \sigma')$ are equivalent, so are the cobordisms
of sutured manifolds associated to them.
Therefore, we obtain a functor
\[
\mc W: \DLink \to \BSut.
\]
Notice that $\mc W(Y, L, P)$ is what we called $Y(L,P)$.
By composing with $\SFH$, we obtain a functor from $\DLink$ to $\Vect$.
This functor is a generalisation of the \emph{link Floer homology} functor $\HFLh$
due to the following proposition.

\begin{proposition}[{\cite[Proposition 9.2]{SFH}}]
\label{prop:HFL}
If $(Y,L,P)$ is a decorated link such that $P$ intersects each
component of $L$ in exactly 2 points, then
\[
\HFLh(Y, L) \cong \SFH(Y(L, P)).
\]
\end{proposition}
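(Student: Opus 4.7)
The plan is to exhibit a single Heegaard diagram that simultaneously computes both sides. The hypothesis $|P\cap L_0|=2$ for every component $L_0$ of $L$ is exactly what is required so that a multi-pointed Heegaard diagram of $(Y,L)$ with one $w$-basepoint and one $z$-basepoint per component matches, on the nose, a sutured diagram of $Y(L,P)$.

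First, I would choose a multi-pointed Heegaard diagram $(\Sigma,\ba,\bb,\mathbf{w},\mathbf{z})$ of $(Y,L)$ such that the $2$ basepoints on each component $L_0$ lie at the two points of $P\cap L_0$, with the $w$-basepoint being the point where $L$ exits $R_-(P)$ and the $z$-basepoint the point where $L$ exits $R_+(P)$. Such a diagram exists by a standard Morse-theoretic construction on $(Y,L)$.

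Next, I would produce a sutured Heegaard diagram for $Y(L,P)$ by removing a small open disk neighborhood of each point of $\mathbf{w}\cup\mathbf{z}=P$ from $\Sigma$, keeping the same $\ba$ and $\bb$ curves. The resulting triple $(\Sigma',\ba,\bb)$ is a balanced sutured Heegaard diagram: gluing the $\alpha$- and $\beta$-handlebodies of $\Sigma$ along the once-punctured surface $\Sigma'$ is the same as attaching those handlebodies to $\Sigma$ and then removing a tubular neighborhood of $L$, which yields the link exterior, canonically diffeomorphic to $\Bl_L(Y)$. The boundary circles of $\Sigma'$ correspond under this diffeomorphism to meridional sutures based at the points of $P$, and the induced partition of $\partial\Bl_L(Y)\setminus\gamma$ into $R_+$ and $R_-$ matches the decomposition coming from $R_+(P)\cup R_-(P)$ because of the chosen assignment of $\mathbf{w}$ and $\mathbf{z}$. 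Therefore this sutured diagram describes precisely $Y(L,P)$.

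Finally, I would identify the two chain complexes. Both $\widehat{CFL}(Y,L)$ and $SFC(Y(L,P))$ are freely generated over $\F_2$ by the intersection points $\mathbb{T}_{\ba}\cap\mathbb{T}_{\bb}$, and, for a suitable choice of almost complex structure, the differential in both counts index-one pseudoholomorphic Whitney disks whose domain is disjoint from $\mathbf{w}\cup\mathbf{z}$; in the sutured setup this is the condition that the domain lies in $\Sigma'$, i.e., avoids the deleted disks. The two differentials therefore coincide, so the complexes and hence their homologies agree. The main obstacle is purely topological book-keeping, namely verifying that the disk-removal surgery on $\Sigma$ produces the correct sutures on $\Bl_L(Y)$ with the correct orientation conventions; everything else is formal once the diagram is set up.
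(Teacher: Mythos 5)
Your argument is correct and is essentially the standard one: the paper does not reprove this statement but cites it from~\cite[Proposition~9.2]{SFH}, and the proof there is exactly your construction, namely passing from a $2n$-pointed Heegaard diagram with basepoints at the decorations to the sutured diagram obtained by deleting disks about the basepoints, and observing that generators and the basepoint-avoiding differential coincide. The only points you gloss over are routine (admissibility of the common diagram and the orientation/suture book-keeping you already flag), so no genuine gap remains.
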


The above proposition motivates the following definition; cf.~\cite[p.~957]{cobordisms}.

\begin{definition}
\label{def:HFL}
We define the functor
\[
\HFLh \colon \DLink \to \Vect
\]
to be the composition $\SFH \circ \W$.
\end{definition}

\begin{remark}
Given a decorated link cobordism, with a slight abuse of notation, we
will also denote the cobordism of sutured manifolds associated
to it with the same letter.
\end{remark}

\subsection{Orientations and gradings}
\label{sec:origrad}

Note that, in Definition~\ref{def:dlc}, we require the surface~$F$ to be \emph{orientable} rather than \emph{oriented}. Indeed, the functor~$\mc W$ introduced in the previous section does not depend on the orientation chosen on the links~$L$ and the surfaces~$F$.
Therefore, the cobordism map induced on sutured Floer homology is independent of the orientations chosen.
However, starting from Section~\ref{sec:morse}, when we consider the Alexander and the Maslov gradings on~$\HFLh$,
we need to specify orientations of~$L$ and~$F$.

An alternative piece of data usually given to define gradings is a labelling of the points of~$P$ by~$w$ or~$z$.
This was the original approach taken by Ozsv\'ath-Sza\-b\'o~\cite{HFK1, HFL} and Rasmussen~\cite{HFK2},
and was also used by Zemke~\cite{zemkefunctoriality}.
It is equivalent to choosing an orientation of a decorated link.
We now explain why this is the case, and we fix the conventions we are going to use in this paper.

Given a non-empty link $L$, and a finite set of points $P$ on $L$ such that, for every component $L'$ of $L$,
the number $|P \cap L'|$ is even and non-zero, one can define the following additional structures on $(L,P)$:
\begin{enumerate}
\item \label{item:L1} A splitting $P = \boldsymbol{w} \sqcup \boldsymbol{z}$ such that on each component of $L$ points
    in~$\boldsymbol{w}$ and~$\boldsymbol{z}$ alternate.
\item \label{item:L2} A splitting $L \setminus P = R_-(P) \cup R_+(P)$ such that $(L,P)$ is a decorated link.
\item \label{item:L3} An orientation of $L$.
\end{enumerate}
We say that \eqref{item:L1}, \eqref{item:L2}, and \eqref{item:L3} are \emph{compatible} if $L$ is oriented from $R_-(P)$ to $R_+(P)$ when it crosses $\boldsymbol{z}$. Any two structures among \eqref{item:L1}, \eqref{item:L2}, and \eqref{item:L3} determine the third. Note that this implies that an oriented decorated link can be represented by a multi-pointed Heegaard diagram $(\Sigma, \ba, \bb, \boldsymbol{w}, \boldsymbol{z})$.

Analogously, given a surface with divides $(F, \sigma)$ such that $F \setminus \sigma$ has no closed components, one can define the following additional structures:
\begin{enumerate}
\item \label{item:F1} A splitting $F \setminus \sigma = F_w \sqcup F_z$ such that $\de F_w \cap \de F_z = \sigma$
($F_w$ and $F_z$ are called \emph{type-$\boldsymbol{w}$} and \emph{type-$\boldsymbol{z}$} regions in \cite{zemkefunctoriality}, respectively).
\item \label{item:F2} An orientation of each component of $F \setminus \sigma$
     such that $\de(F \setminus \sigma) = 2\sigma$, for some choice of orientation of $\sigma$.
\item \label{item:F3} An orientation of $F$.
\end{enumerate}
We say that \eqref{item:F1}, \eqref{item:F2}, and \eqref{item:F3} are \emph{compatible} if $F_w$ is oriented as a submanifold of $F$. Any two structures among \eqref{item:F1}, \eqref{item:F2}, and \eqref{item:F3} determine the third.

We now recall the definitions and basic properties of gradings on $\HFLh$.
Suppose that $(Y, L, P)$ is a null-homologous \emph{oriented} decorated link. 
Let $(M, \gamma)$ denote the complementary sutured manifold $Y(L,P)$.
Given a relative $\SpinC$ structure $\s \in \SpinC(M, \gamma)$
and a Seifert surface $S$ for $L$, its \emph{Alexander grading} is
\[
\A_S(\s) := \frac12 \langle c_1(\s, t) , [S] \rangle,
\]
where $t$ is the trivialisation of $v_0^\perp$ given by the meridional direction.
As we explained in \cite[Section~5.1]{concordance}, this agrees with the original definition due to
Ozsv\'ath and Szab\'o~\cite{HFK1}. If $Y$ is a rational homology 3-sphere,
then the Alexander grading does not depend on the choice of~$S$.
Given a multi-pointed Heegaard diagram $\mc H = (\Sigma, \ba, \bb, \bw, \bz)$ for $(Y, L, P)$,
one can associate to each generator $\x \in \T_{\ba} \cap \T_{\bb}$ a relative $\SpinC$ structure $\s(\x) \in \SpinC(Y(L,P))$,
and define $\A_S(\x) := \A_S(\s(\x))$.
If two generators $\x$ and $\y$ are connected by a domain $D$ on $\mc H$, then, by \cite[Lemma~2.5]{HFK1} and \cite[p.~25]{HFK2},
\[
\A_S(\x) - \A_S(\y) = n_{\bz}(D) - n_{\bw}(D).
\]

Let $\x$ and $\y$ be generators lying in the same $\SpinC$ structure $\mf t$ on $Y$.
Then one can define their relative $\Z_{\mf d(\mf t)}$ grading, where $\mf d(\mf t)$ is
the divisibility of~$c_1({\mf t})$, by choosing a domain $D$ connecting $\x$ to $\y$, and setting
\[
\gr(\x, \y) = \mi(D) - 2 n_{\bw}(D) \bmod \mf d(\mf t).
\]
If $\mf t \in \SpinC(Y)$ is torsion (i.e., it has torsion first Chern class),
then the relative $\Z$ grading can be lifted to an absolute $\Q$ grading,
denoted by $\agr$. This grading is known as the \emph{homological} or \emph{Maslov} grading.


\section{Product annuli in link complements}
\label{sec:morse-pa}

We now study the behaviour of sutured Floer homology under product annulus decompositions.
For more general sutured manifold decompositions, see~\cite{surface, gluingmap}.

\begin{definition}[{\cite[Definition~3.1]{suturedmanifolds}}]
Let $(M,\gamma)$ be a sutured manifold without toroidal sutures. A \emph{decomposing surface} is
a properly embedded oriented surface~$S$ in~$M$ such that each component~$\lambda$
of~$S \cap \gamma$ is either a properly embedded non-separating arc in~$\gamma$
such that $|\lambda \cap s(\gamma)| = 1$, or it is a simple closed curve in
an annular component~$A$ of~$\gamma$ in the same homology class as~$A \cap s(\gamma)$.
\end{definition}

As explained by Gabai~\cite[Definition~3.1]{suturedmanifolds},
a decomposing surface~$S$ in $(M,\gamma)$ defines a sutured manifold decomposition
\[
(M, \gamma) \overset{S}{\rightsquigarrow} (M', \gamma'),
\]
where $M' = M \setminus \Int(\nbd(S))$, the new sutures are
\[
\gamma' = (\gamma \cap M') \cup \nbd(S'_+ \cap R_-(\gamma)) \cup \nbd(S'_- \cap R_+(\gamma)),
\]
and $R_\pm(\gamma') = ((R_\pm(\gamma) \cap M') \cup S'_\pm) \setminus \Int(\gamma')$,
where $S'_+$ ($S'_-$) is the component of $\nbd(S) \cap M'$ whose normal vector points out of (into) $M'$.

\begin{definition}[{\cite[Definition~2.1]{Gabai1}}]
A \emph{product annulus} $A$ is a properly embedded annulus in a sutured manifold $(M, \gamma)$ such that one component of $\partial A$ is contained in $R_-(\gamma)$ and the other one is contained in $R_+(\gamma)$.
\end{definition}

If $(M, \gamma) \overset{A}{\rightsquigarrow} (M', \gamma')$ is a sutured manifold decomposition
along a product annulus~$A$, then the resulting sutured manifold $(M',\gamma')$ can
be described by $M' = M \setminus \Int(\nbd(A))$ and $\gamma' = \gamma \cup (\nbd(A) \cap M')$,
while $R_\pm(\gamma') = R_\pm(\gamma) \cap M'$.

\begin{definition}[{\cite[Definition~1.2]{surface}}]
Suppose that $R$ is a compact, oriented surface with non-empty boundary.
Let $c$ be an oriented simple closed curve in $\Int(R)$. If $[c]=0$ in $H_1(R)$,
then $R \setminus c$ can be written as $R_1 \sqcup R_2$,
where $R_1$ is the component of $R \setminus c$
that is disjoint from $\de R$ and satisfies $\de R_1 = c$.
We call $R_1$ the \emph{interior} and $R_2$ the \emph{exterior} of $c$.

We say that the curve $c$ is \emph{boundary-coherent} if either $[c] \neq 0$ in $H_1(R)$, or,
if $[c] = 0$ in $H_1(R)$ and $c$ is oriented as the boundary of its interior.
\end{definition}

\begin{lemma}[{\cite[Lemma~8.9]{surface}}]
\label{lem:pa0}
Suppose that $(M,\gamma)$ is a balanced sutured manifold such that $H_2(M) = 0$. Let $A \subset (M, \gamma)$ be a product annulus such that at least one component of $\de A$ is non-zero in $H_1(R(\gamma))$ or both components are boundary-coherent in $R(\gamma)$. Then $A$ defines a sutured manifold decomposition $(M, \gamma) \overset{A}{\rightsquigarrow} (M', \gamma')$ such that
\[
\SFH(M', \gamma') \cong \SFH(M, \gamma).
\]
\end{lemma}

We will need to study product annulus decompositions of link complements
with non-vanishing second homology, and our goal is to extend the above
lemma to this case, and to make the isomorphism canonical.

Suppose that $S$ is a decomposing surface in the balanced sutured manifold $(M,\gamma)$ such that
each component of $\partial S$ intersects $s(\gamma)$ nontrivially,
and let $(M, \gamma) \overset{S}{\rightsquigarrow} (M',\gamma')$
be the corresponding sutured manifold decomposition.
Then Honda, Kazez, and Mati\'c~\cite[Section~6]{HKMcontact} constructed a monomorphism
\[
\Phi_S \colon \SFH(M', \gamma') \to \SFH(M, \gamma).
\]
It coincides with the gluing map $\Phi$ of~\cite[Theorem~1.3]{gluingmap}
according to~\cite[Proposition~6.4]{gluingmap}.
The map $\Phi$ is defined as follows.
By slightly shrinking~$M'$, we can view it as a submanifold of $\Int(M)$.
Let $\xi$ be a contact structure on $-M \setminus \Int(M')$ with convex boundary
and dividing set $-(\gamma \cup \gamma')$. We require $\xi$ to be translation invariant in $\nbd(S)$
and in $\nbd(\partial M)$, where $\nbd(S) \cup \nbd(\partial M) = M \setminus \Int(M')$.
Furthermore, $S$ is convex with Legendrian boundary and with boundary-parallel dividing set.
Then $\Phi$ is defined to be the contact gluing map $\Phi_\xi$ that also features in
the construction of the sutured manifold cobordism maps.

If $A$ is a product annulus in the balanced sutured manifold $(M,\gamma)$,
then the gluing map $\Phi_A$ is not defined as $A \cap \gamma = \emptyset$.

\begin{definition} \label{def:arcs}
We said that the embedded arcs $a_\pm \subset R_\pm(\gamma)$ are \emph{adapted} to~$A$ if
$\{a_\pm(0)\} = a_\pm \cap A \cap R_\pm(\gamma)$
and $\{a_\pm(1)\} = a_\pm \cap \gamma$, and write $a = a_+ \cup a_-$.
We further require that $a_\pm$ is transverse to $\partial A$ in $\partial M$,
and we have the equality $a_\pm \cdot \partial A = a_\pm \cdot \g$ of intersection signs.
\end{definition}

If $a = a_+ \cup a_-$ are arcs adapted to the product annulus~$A$,
then we can perform finger moves on~$A$ along~$a_+$ and~$a_-$ to obtain
a new decomposing surface $A_a$. Let $(M',\gamma')$ be the result
of decomposing $(M,\gamma)$ along~$A$, and $(M_a,\gamma_a)$ the result of
decomposing $(M,\gamma)$ along~$A_a$.
Both components of $\partial A_a$ intersect the sutures,
hence $A_a$ induces a gluing map  $\Phi_{A_a} \colon \SFH(M_a,\gamma_a) \to \SFH(M,\gamma)$.
There is an isotopically unique diffeomorphism $d_a \colon (M',\gamma') \to (M_a,\gamma_a)$;
cf.~\cite[Lemma~4.5]{surface}.
We let
\[
\Phi_{A,a} := \Phi_{A_a} \circ (d_a)_* \colon \SFH(M',\gamma') \to \SFH(M,\gamma).
\]
It follows from the naturality of the gluing maps~\cite[Section~5]{gluingmap}
that if $b = b_+ \cup b_-$ is another pair of arcs adapted to $A$ that is
isotopic to~$a$ through adapted arcs, then $\Phi_{A,a} = \Phi_{A,b}$.

Suppose that the sutured manifold $(M,\gamma)$ is complementary to a decorated link in a 3-manifold.
If $A$ is a product annulus in $(M,\gamma)$, then there is a unique isotopy class of
arcs adapted to~$A$. Indeed, each component of~$R(\gamma)$ is an annulus, so each component of~$\partial A$
lies between two oppositely oriented sutures, and the arcs~$a_+$ and~$a_-$ are uniquely determined
up to isotopy by the coorientation of~$A$. Hence, when $A$ is a product annulus in a link
complement, we write $\Phi_A := \Phi_{A,a}$ for any adapted pair of arcs~$a= a_+ \cup a_-$.
Note that if we decompose a link complement along a product annulus,
we always obtain another link complement.

\begin{lemma}
\label{lem:PhiAiso}
Let $(Y, L, P)$ and $(Y', L', P')$ be decorated links
with complementary sutured manifolds $(M,\gamma)$ and $(M',\gamma')$, respectively,
such that there is a product annulus decomposition $(M, \gamma) \overset{A}{\rightsquigarrow} (M', \gamma')$.
Then the map
\[
\Phi_A \colon \SFH(M', \gamma') \to \SFH(M, \gamma)
\]
is an isomorphism.
\end{lemma}

Before proving the lemma, we recall \cite[Definition~4.3]{surface}.

\begin{definition}
A balanced diagram \emph{adapted} to the decomposing surface $S$ in $(M, \gamma)$ is a quadruple $(\Sigma, \ba, \bb, C)$ that satisfies the following conditions: The triple
$(\Sigma, \ba, \bb)$ is a balanced diagram for $(M, \gamma)$; furthermore, $C$ is a compact subsurface of $\Sigma$ whose boundary is a union of polygonal curves such that $C \cap \de \Sigma$ is the set of vertices of $\partial C$.
We are also given a decomposition $\de C = A \cup B$, where both $A$ and $B$ are unions of pairwise disjoint edges of $\partial C$,
and such that $\ba \cap B = \emptyset$ and $\bb \cap A = \emptyset$.
Finally, $S$ is given, up to equivalence, by smoothing the corners of the surface
\[
(C \times \{1/2\}) \cup (A \times [1/2,1]) \cup (B \times [0, 1/2]) \subset (M,\gamma).
\]
The orientation of $S$ is coherent with orientation of $C \subset \Sigma$.
We call a tuple $(\Sigma, \ba, \bb, C)$ satisfying the above conditions a \emph{surface diagram}.
\end{definition}

\begin{proof}[{Proof of Lemma \ref{lem:PhiAiso}}]
Choose a pair of arcs $a$ adapted to $A$. By definition, $\Phi_A = \Phi_{A_a} \circ (d_a)_*$.
The map $(d_a)_*$ is an isomorphism and $\Phi_{A_a}$ is injective, so $\Phi_A$ is also injective.
Hence, the result follows once we show that
\[
\SFH(M, \gamma) \cong \SFH(M', \gamma').
\]
By Lemma~\ref{lem:pa0}, this holds when $H_2(M)=0$.
The technical assumption $H_2(M) = 0$ ensures that there is
an \emph{admissible} surface diagram~$(\mc H, C)$ adapted to the product annulus~$A$
such that $\CF(\mc H, \mf s) = 0$ for all $\mf s \not \in O_A$,
where $O_A$ is the set of outer $\SpinC$ structures with respect to~$A$; see \cite[Definition~1.1]{surface}.
Although $H_2(M)$ may be non-zero in our case, we can still find an admissible surface diagram adapted to~$A$,
as follows.

Let $P' \setminus P = \{p,q\}$.
Given a Heegaard diagram $\mc H' = (\S',\ba,\bb)$ for $(M', \gamma')$,
we obtain a Heegaard diagram $\mc H = (\S,\ba,\bb)$ for $(M, \gamma)$
by connecting the components $s$ and $t$ of $\partial \S'$
corresponding to $p$ and $q$ with an annulus.
If the core of the connecting annulus is~$c$, then $A = c \times I \subset M$.

If we glue product 2-handles to $(M',\gamma')$ along~$s$ and~$t$,
then the resulting sutured manifold $(N,\nu)$ is still balanced.
Indeed, as $\chi(R_+(\nu)) = \chi(R_-(\nu))$,
the only thing we need to check is that each component of~$\partial N$ has a suture.

First, suppose that~$p$ and~$q$ belong to different components of~$L'$.
Then, by definition, $P'$ has at least two decorations on each component of~$L'$.
Hence gluing product 2-handles along~$s$ and~$t$ results in two $S^2$ components of $\partial N$ with at least one suture on each.

Now suppose that~$p$ and~$q$ lie on the same component~$K'$ of~$L'$ and $|L| = |L'| + 1$.
As the components $K_0$ and $K_1$ of $L$ corresponding to $K'$ each contain at least two points of~$P$,
we have $|K' \cap P'| \ge 6$.
Furthermore, $p$ and~$q$ cannot be consecutive decorations,
as otherwise one of $K_0$ and $K_1$ would have no decorations.
Hence $\partial N$ has two $S^2$ components with at least one suture on each.

Finally, suppose that~$p$ and~$q$ lie on the same component $K'$ of~$L'$ and $|L| = |L'|$.
Then $K'$ must contain at least four decorations, and $p$ and $q$
cannot be consecutive since $A$ has one boundary component in $R_-(\g)$,
and one boundary component in $R_+(\g)$.
Hence $\partial N$ has two $S^2$ components with at least one suture on each.

Let $\mc H_N = (\S_N, \ba, \bb)$ be the Heegaard diagram for $(N, \nu)$ obtained from $\mc H'$ by gluing disks
to $\S'$ along~$s$ and~$t$.
Since $(N, \nu)$ is still a balanced sutured manifold by the discussion above,
it follows from \cite[Proposition~3.15]{SFH} that $\mc H_N$ can be made admissible by winding~$\bb$ on $\S_N$,
giving rise to a diagram $(\S_N, \ba, \bb')$.
Furthermore, we can wind along arcs that are disjoint from~$s$ and~$t$,
hence $(\S',\ba,\bb')$ is also an admissible diagram of $(M',\gamma')$.
Without loss of generality, we can assume that $\mc H'$ is already such that $\mc H_N$ is admissible.
Then we claim that $\mc H$ is also admissible.
Indeed, if $\mc P$ is a non-zero periodic domain in $\mc H$,
then compressing it along the core curve~$c$,
we obtain a periodic domain~$\mc P'$ in~$\mc H_N$.
As $\mc H_N$ is admissible, $\mc P'$, and hence also~$\mc P$, has both positive and negative multiplicities.

In particular, we can assume that both diagrams $\mc H$ and $\mc H'$ are admissible.
Let $C$ be an annular neighborhood of the curve~$c$ disjoint from~$\ba$ and~$\bb$.
Then $(\mc H, C)$ is a surface diagram adapted to~$A$.
Every $\x \in \T_\a \cap \T_\b$ has $\x \cap C = \emptyset$, hence $O_C = \T_\a \cap \T_\b$.
By \cite[Lemma~5.5]{surface}, we have $\s(\x) \in O_A$ if and only if $\x \in O_C$,
so $\CF(\H,\s) = 0$ for every $\s \not\in O_A$.
Using the sutured manifold decomposition formula \cite[Theorem~1.3]{surface},
we obtain that
\[
\SFH(M',\gamma') \cong \bigoplus_{\s \in O_A} \SFH(M,\gamma,\s) = \SFH(M,\g),
\]
and the result follows.
\end{proof}

In the next key result, we show that the map~$\Phi_A$ preserves the Alexander and Maslov gradings
if~$A$ is a product annulus in a link complement.

\begin{prop}
\label{prop:pa}
Let $(Y, L, P)$ and $(Y', L', P')$ be decorated null-homologous links
with complementary sutured manifolds $(M,\gamma)$ and $(M',\gamma')$, respectively,
such that there is a product annulus decomposition $(M, \gamma) \overset{A}{\rightsquigarrow} (M', \gamma')$.
Let $S$ be a Seifert surface for $L$ that intersects~$A$ in an essential arc.
Then $S' := S \cap M'$ is a Seifert surface for~$L'$.
With the orientations of $L$ and $L'$ induced by the Seifert surfaces, the map
\[
\Phi_A \colon \HFLh(Y', L', P') \to \HFLh(Y, L, P)
\]
satisfies $A_{S'}(x) = A_S(\Phi_A(x))$ for every homogeneous $x \in \HFLh(Y', L', P')$.

If both $\HFLh(Y, L, P)$ and $\HFLh(Y', L', P')$ admit absolute Maslov gradings,
then $\Phi_A$ also preserves the Maslov grading.
\end{prop}

\begin{proof}
Choose a pair of arcs $a$ adapted to~$A$ and disjoint from~$S$.
By definition, $\Phi_A = \Phi_{A,a} = \Phi_{A_a} \circ (d_a)_*$.
Note that the diffeomorphism $d_a \colon (M',\gamma') \to (M_a, \gamma_a)$
fixes the Seifert surface $S'$ pointwise,
and hence $(d_a)_*$ preserves the $S'$-Alexander grading and the Maslov grading.
So it suffices to prove that $\Phi_{A_a}$ preserves the Alexander grading
and the Maslov grading.

We saw above that $\Phi_{A_a} = \Phi_\xi$ for a contact structure $\xi$ on $-M \setminus \Int(M_a)$
with convex boundary and dividing set $-(\gamma \cup \gamma')$.
For
\[
\s = [v] \in \SpinC(M_a,\gamma_a) = \SpinC(-M_a,-\gamma_a),
\]
let $f_{\xi}(\s) = [v \cup \xi^\perp] \in \SpinC(M,\g)$.
By \cite[Proposition~9.11]{cobordisms},
\[
\Phi_\xi(\SFH(M_a, \gamma_a, \s)) \subseteq \SFH(M, \gamma, f_{\xi}(\s)),
\]
hence $A_S(\Phi_\xi(x)) = A_S(f_\xi(\s))$ for any $x \in \SFH(M_a, \g_a, \s)$.

We first show that $\Phi_{\xi}$ preserves the relative Alexander grading; i.e.,
\begin{equation} \label{eqn:relative}
A_{S'}(x) - A_{S'}(y) = A_S(\Phi_\xi(x)) - A_S(\Phi_\xi(y))
\end{equation}
for any homogeneous elements $x$, $y \in \SFH(M_a,\g_a)$.
If $x \in \SFH(M_a,\g_a,\s)$ and $y \in \SFH(M_a,\g_a,\t)$,
then choose admissible representatives $v$ and $w$ of $\s$ and $\t$, respectively.
Then $f_\xi(\s) = [v \cup \xi^\perp]$ and $f_\xi(\t) = [w \cup \xi^\perp]$.
The class $\PD[\s - \t] \in H_1(M_a)$
is represented by the 1-manifold~$D \subset M_a$ where $v = -w$. By construction, $\PD[f_\xi(\s) - f_\xi(\t)] \in H_1(M)$
is also represented by~$D$. Hence
\[
\begin{split}
A_{S'}(\s) - A_{S'}(\t) = \scal{\s - \t}{[S']} = D \cap [S'] =\\
D \cap [S] = \scal{f_\xi(\s) - f_\xi(\t)}{[S]} = A_S(f_\xi(\s)) - A_S(f_\xi(\t)),
\end{split}
\]
which implies equation~\eqref{eqn:relative}.

Both $(\SFH(M_a,\gamma_a), A_{S'})$ and $(\SFH(M,\gamma), A_S)$ are nonzero, finite dimensional,
graded vector spaces with support symmetric in~$0$.
By Lemma~\ref{lem:PhiAiso}, the map $\Phi_A$ is an isomorphism, hence so is $\Phi_\xi$.
As $\Phi_{\xi}$ preserves the relative Alexander grading,
it shifts the absolute Alexander grading by some constant~$n$. But $n = 0$,
as otherwise the support of $A_S$ would not be symmetric in~$0$.

Suppose now that both $\hat\HFL(Y,L,P)$ and $\hat\HFL(Y',L',P')$ admit absolute Maslov gradings
(i.e., if they are supported in torsion $\SpinC$ structures in $\SpinC(Y)$
and $\SpinC(Y')$, respectively).
To show that $\Phi_A$ preserves the Maslov grading, it suffices to prove
the $\Phi_\xi$ preserves the Maslov grading.

\begin{figure}
\begin{center}
\includegraphics{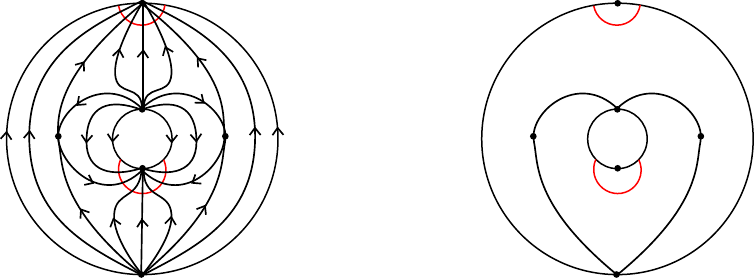}
\caption{A characteristic foliation on the annulus $A_a$, and a Legendrian skeleton on the right.
The dividing set is drawn in red.}
\label{fig:foliation}
\end{center}
\end{figure}

We use the definition of the contact gluing map given in the proof of~\cite[Theorem~6.2]{HKMcontact},
which agrees with~$\Phi_\xi$ by~\cite[Proposition~6.4]{gluingmap}.
First, choose an arbitrary extension of~$\xi$ to~$-M$.
By construction, the annulus $A_a$ is a convex surface in~$\xi$
with dividing set two boundary-parallel arcs, one for each component of $\partial A_a$.
The half-disks cut off by these arcs form the minus region $R_-(A_a)$, and $R_+(A_a)$ is the rest.
In fact, we can choose~$\xi$ such that the characteristic foliation on $A_a$ is as on the left of Figure~\ref{fig:foliation}.
Then a Legendrian skeleton~$C$ of $R_+(A_a)$ is shown on the right of Figure~\ref{fig:foliation}. It consists of two
arcs connecting the two points in $R_+(A_a) \cap s(\g)$, and their union is homotopic to the
core of~$A_a$. We then take a neighbourhood $A_a \times [-1,1]$ of~$A_a$ in~$M$ where $\xi$ is $[-1,1]$-invariant,
and choose parallel copies $A_{-\varepsilon} = A_a \times \{-\varepsilon\}$ and
$A_{\varepsilon} = A_a \times \{\varepsilon\}$, together with Legendrian skeletons
$C_{-\varepsilon} = C \times \{-\varepsilon\}$ and $C_{\varepsilon} = C \times \{\varepsilon\}$.
As in the proof of~\cite[Theorem~6.2]{HKMcontact}, extend $C_{-\varepsilon} \cup C \cup C_{\varepsilon}$
to a Legendrian skeleton~$K$ of~$(-M,-\g,\xi)$ such that the extension is
disjoint from $A_a \times [-\varepsilon,\varepsilon]$ and from $C_{-\varepsilon} \cup C \cup C_{\varepsilon}$.
From the decomposition $M = N(K) \cup (M \setminus N(K))$, we obtain a partial open
book decomposition $(\mc S, h \colon P \to \mc S)$ of $(M,\g)$, which is shown in Figure~\ref{fig:annulus}.
(For a slightly different partial open book and arc basis, see \cite[Figure~29]{HKMcontact}.)

\begin{figure}
\begin{center}
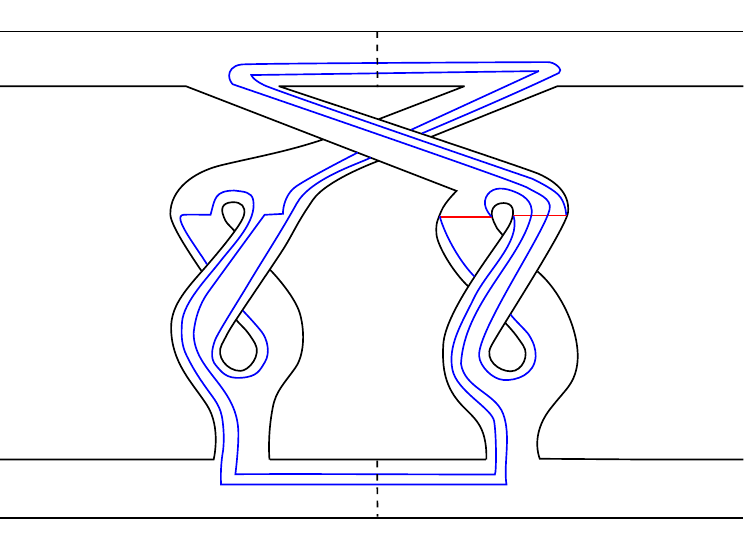
\caption{The sutured diagram arising from a partial open book decomposition adapted
to the annulus gluing along~$A_a$. The red arcs are $a_1$ and $a_2$ (or $\alpha_1$ and $\alpha_2$
once we glue their endpoints), the blue arcs are $h(a_1)$ and $h(a_2)$
(or $\beta_1$ and $\beta_2$ once we glue their endpoints). We obtain a partial open book for $(M',\g')$
by cutting along the dashed arcs $d_1$ and $d_2$ and removing $Q$ from $P$.}
\label{fig:annulus}
\end{center}
\end{figure}

We denote by $Q$ and $Q_{\pm \varepsilon}$ the components of $P = \mc S \setminus \overline{R_+(\g)}$
corresponding to $L$ and $L_{\pm \varepsilon}$, respectively. Given a properly embedded arc~$u$
on~$Q$ with endpoints in $\partial Q \setminus s(\gamma)$, we let $u_{\varepsilon} = u \times \{\varepsilon\}$.
Then $h(u) = u'u_{\varepsilon}u''$, where $u'$ and $u''$ are arcs that connect the corresponding
endpoints of $u$ and $u_\varepsilon$ by following $\partial \mc S$ in the positive direction,
and crossing to the other side of $\partial \mc S$ when reaching $\partial Q_\varepsilon \setminus \partial \mc S$;
see Figure~\ref{fig:annulus}.

We obtain a partial open book decomposition $(\mc S',h' \colon P' \to \mc S')$ of $(M',\g')$
by cutting~$\mc S$ along the arcs~$d_1$ and~$d_2$ shown in Figure~\ref{fig:annulus},
setting $P' = P \setminus Q$, and $h' = h|_{P'}$.
Note that $D_i = d_i \times [-1,1]$ are product disks corresponding to
the components of $R_-(A_a)$. We obtain a sutured diagram $\mc H = (\S,\ba,\bb)$
of $(M,\g)$ from $(\mc S,h)$ as follows: Take an arc basis $a_1, \dots, a_n$
of $H_1(P, P \cap \partial \mc S)$ such that $a_1$, $a_2 \subset Q$ are as the
red arcs in Figure~\ref{fig:annulus}, and
for $i \in \{1, \dots, n\}$, let $b_i$ be a translate of $a_i$
in the positive direction along $\partial \mc S$ such that $|a_i \cap b_i| = 1$.
We obtain $\S$ by gluing $\mc S \times \{0\}$ to $P \times \{1\}$ along $P \cap \partial \mc S$.
Furthermore, we set $\alpha_i = a_i \times \{0,1\}/ \sim$ and
$\beta_i = (b_i \times \{1\}) \cup (h(b_i) \times \{0\})/ \sim$, and let $x_i = (a_i \cap b_i) \times \{1\}$.
We obtain the diagram $\mc H'$ of $(M',\g')$ from $(\mc S', h')$ in an analogous manner.
For $\x' \in \T_{\alpha'} \cap \T_{\beta'}$, Honda, Kazez, and Mati\'c defined
$\Phi_\xi(\x') = (\x',x_1,x_2) \in \T_{\alpha} \cap \T_{\beta}$.
One also obtains~$\S$ from $S$ by identifying the two components
of~$\nbd(a_i) \cap \partial \mc S$. This way, Figure~\ref{fig:annulus} also represents~$\mc H$:
The red arcs are~$\a_1$ and~$\a_2$, while the blue arcs are~$\b_1$ and~$\b_2$.
There might be other $\b$-curves crossing $\a_1$ and $\a_2$ that we have omitted from the figure.

Given $\x'$, $\y' \in \T_{\alpha'} \cap \T_{\beta'}$, let $\x = \Phi_\xi(\x')$ and $\y = \Phi_\xi(\y')$.
Consider the multi-pointed Heegaard diagram $\bar{\mc H} = (\bar{\Sigma},\ba,\bb, \bw, \bz)$,
where $\bar{\S}$ is obtained by gluing disks to $\S$ along $\partial \S$, and adding
a $z$ or $w$ basepoint in each disk depending on the type of suture we are capping off.
Similarly, we obtain $\bar{\mc H}' = (\bar{\S}',\ba',\bb',\bw',\bz')$ by capping off~$\mc H'$.
Let $\mc D'$ be a domain on $\bar{\mc H}'$ from~$\x'$ to~$\y'$. We can visualize $\mc D'$ on $\mc H'$
as a domain that might have non-zero multiplicities along~$\partial \S'$.
For $i \in \{1,2\}$, let $d_i^\pm$ denote the two sides of~$d_i$ in~$\S'$,
and let~$k_i^\pm$ be the multiplicity of~$\mc D'$ along~$d_i^\pm$.
Note that $d_1^+$, $d_1^-$, $d_2^+$, and $d_2^-$ belong to different components
$s_1^+$, $s_1^-$, $s_2^+$, and $s_2^-$ of $s(\g') = \partial \S'$.
The sutures $s_i^+$ and $s_i^-$ are glued to a suture $s_i$ of $\S'$ for $i \in \{1,2\}$.
Note that $A_a$ intersects $s(\g)$ in $s_1$ and $s_2$.

\begin{lem} \label{lem:gluing}
Suppose that $k_1^+ = k_1^- = k_2^+ = k_2^-$; call this common value~$k$.
Then $\gr(\x,\y) = \gr(\x',\y')$.
\end{lem}

\begin{proof}
In this case, $\mc D'$ glues to a domain $\mc D$ in $\bar{\mc H}$ from $\x$ to $\y$
if we glue~$d_i^+$ to~$d_i^-$, and identify the components of $\nbd(a_i) \cap \partial \mc S$
for $i \in \{1,2\}$.
In particular, $\mc D$ has multiplicity~$k$ in $Q \times \{1\}$.
We claim that $\mu(\mc D') = \mu(\mc D) + 2k$.
By Lipshitz's formula \cite[Corollary 4.10]{Lipshitz},
\[
\mi(\mc D) = n_{\x}(\mc D) + n_{\y}(\mc D) + e(\mc D),
\]
and an analogous formula holds for $\mc D'$.
Here $n_{\x}(\mc D)$ denotes the \emph{point measure} of the domain,
obtained by summing the multiplicities of $\mc D$ in all regions adjacent
to $\x$ and dividing by~4, while $e(\mc D)$ denotes the \emph{Euler measure} of~$\mc D$.
If $F$ is a surface with boundary and corners,
then $e(F)$ is $\frac{1}{2\pi}$ times the integral over $F$ of the curvature
of a metric on $F$ for which the edges of $\de F$ are geodesics that meet at right angles.

We first compare $e(\mc D)$ and $e(\mc D')$.
We obtain $\mc D'$ from $\mc D$ as follows: We remove the disks glued to~$s_1$ and~$s_2$.
Both have multiplicity~$k$, hence this decreases the Euler measure by~$2k$.
We then cut the resulting surface along~$d_1$ and~$d_2$, which increases the Euler measure
by~$2k$. We next remove $Q \times \{1\}$, which is the same as cutting $\S$ along $\nbd(a_1 \cap \partial \mc S)$
and $\nbd(a_2 \cap \partial \mc S)$.
This again increases the Euler measure by $2k$. We finally fill the sutures
$s_1^+$, $s_1^-$, $s_2^+$, and $s_2^-$ with disks of multiplicity~$k$ each,
increasing the Euler measure by~$4k$. In summary,
\[
e(\mc D') = e(\mc D) -2k +2k +2k +4k = e(\mc D) + 6k.
\]
On the other hand,
\[
n_{\x'}(\mc D') + n_{\y'}(\mc D') = n_{\x}(\mc D) + n_{\y}(\mc D) - 4k,
\]
since $\x = (\x',x_1,x_2)$ and $\y = (\y',x_1,x_2)$, and the multiplicity of~$\mc D$
at~$x_1$ and~$x_2$ is~$k$. It follows that $\mu(\mc D') = \mu(\mc D) + 2k$.

Since $|\bw'| = |\bw|+1$, and the extra $w$ suture has multiplicity~$k$, we have
\[
n_{\bw'}(\mc D') = n_{\bw}(\mc D) + k.
\]
By definition of the relative Maslov grading and the above computation, we have
\[
\gr(\x',\y') = \mi(\mc D') - 2 n_{\bw'} (\mc D') =
\mi(\mc D) +2k - 2n_{\bw}(\mc D) -2k = \gr(\x, \y),
\]
as claimed.
\end{proof}

We now proceed in six steps:

\emph{Step 1: If $\s(\x') = \s(\y')$ in $\SpinC(M',\g')$, then $\gr(\x,\y) = \gr(\x',\y')$.}
Indeed, since $\s(\x') = \s(\y')$, we can choose a domain $\mc D'$ from $\x'$ to $\y'$ in $\mc H'$.
In particular, $\mc D'$ has multiplicity zero along $\partial \S'$.
As the multiplicities $k_i^\pm$ of $\mc D'$ are all zero, the claim immediately follows from
Lemma~\ref{lem:gluing}.

\emph{Step 2: Let $L^+$ (respectively $L^-$)
be the component of $L'$ containing the decorations corresponding to~$s_1^+$ and $s_2^+$
(respectively $s_1^-$ and $s_2^-$).
If $L^+ = L^-$ and $|P' \cap L^+| \ge 6$, or if $L^+ \neq L^-$ and $|P' \cap L^+| \ge 4$
and $|P' \cap L^-| \ge 4$, then $\gr(\x,\y) = \gr(\x',\y')$.}
It follows from the assumptions that there are components $R_i^\pm$ of $R(\g')$ for $i \in \{1,2\}$ such that
$d_i^\pm \subset \partial R_i^\pm$, but every element of $\{d_1^+,d_1^-,d_2^+,d_2^-\} \setminus \{d_i^\pm\}$
is disjoint from~$R_i^\pm$.
Then $R_i^\pm$ corresponds to a periodic domain $\mc P_i^\pm$ in $\bar{\mc H}'$
such that $\partial \mc P_i^\pm$ has multiplicity 1 along $d_i^\pm$ and 0 along
every element of $\{d_1^+,d_1^-,d_2^+,d_2^-\} \setminus \{d_i^\pm\}$.
Hence the domain
\[
\mc D' + \sum_{i =1}^2 (k - k_i^+) \mc P_i^+ + (k - k_i^-) \mc P_i^-
\]
has multiplicity $k$ along every $d_i^\pm$.
So, without loss of generality, we can suppose that $k_1^+ = k_1^- = k_2^+ = k_2^-$,
and the claim follows from Lemma~\ref{lem:gluing}.
From now on, our aim is to be able to increase the number of decorations on
the link components~$L^+$ and~$L^-$ involved in the gluing to reduce to this case.

\emph{Step 3: Suppose that $L^+$ and $L^-$ lie in different components $Y^+$ and $Y^-$ of $Y'$,
that $|P' \cap L_-| \ge 4$, and $\s(\x')|_{Y^+ \cap (M',\g')} = \s(\y')|_{Y^+ \cap (M',\g')}$
in $\SpinC(Y^+ \cap (M',\g'))$. Then $\gr(\x,\y) = \gr(\x',\y')$.}
This is a combination of Steps~1 and~2. In particular, we can assume that $\mc D' \cap Y^+$
has multiplicity zero along $d_1^+$ and $d_2^+$. Furthermore, there are components
$R_1^-$ and $R_2^-$ of $R(\g)$ such that
$d_i^- \subset \partial R_i^-$, but every element of $\{d_1^+,d_1^-,d_2^+,d_2^-\} \setminus \{d_i^-\}$
is disjoint from~$R_i^-$ for $i \in \{1,2\}$.
Then $R_i^-$ corresponds to a periodic domain $\mc P_i^-$ in $\bar{\mc H}'$
such that $\partial \mc P_i^-$ has multiplicity 1 along $d_i^-$ and 0 along
every element of $\{d_1^+,d_1^-,d_2^+,d_2^-\} \setminus \{d_i^-\}$.
Then
\[
\mc D' - k_1^- \mc P_1^- - k_2^- \mc P_2^-
\]
has multiplicity zero along every $d_i^\pm$,
and the claim again follows from Lemma~\ref{lem:gluing}.

\emph{Step 4: The map $\Phi_\xi$ preserves the Maslov grading when
\[
(Y',L',P') = (Y,L,P_0) \sqcup (S^3,U,P^4),
\]
where $U$ is the unknot and $|P^4| = 4$.}
In this case, $Y = Y'$ and $L = L'$, while $|P| = |P'| + 2$.

Note that $\hat\HFL(S^3,U,P^4) \cong V'$, where
$V' = \F_2\langle b, t \rangle$ is the 2-dimensional bigraded vector field generated
by two homogeneous vectors: $b$ in Maslov grading~$-1/2$ and Alexander grading~$-1/2$,
and $t$ in Maslov grading~$1/2$ and Alexander grading~$1/2$.
Indeed, $\mc H_U = (S^2,\a,\b,\bw_U,\bz_U)$ is a diagram of $(S^3,U,P^4)$
if $|\a \cap \b| = 2$, and there is exactly one basepoint in each component
of $S^2 \setminus (\a \cup \b)$ such that there is one point of $\bz$ and one point
of $\bw$ in each component of $S^2 \setminus \a$ and $S^2 \setminus \b$.
The elements $t$ and $b$ are represented by the points of $\a \cap \b$,
labeled such that there is a holomorphic disk from $t$ to $b$ that has a single $z$ basepoint in its interior.
Hence $\gr(t,b) = 1$ and $A_{D^2}(t) - A_{D^2}(b) = 1$, where $D^2$ is the unit disc spanning~$U$.
Note that $\W(S^3,U,P^4) = (S^1 \times D^2, \g_4)$, where $\g_4$ consists of four longitudinal sutures.
Furthermore, $\s(t) = 1/2$ and $\s(b) = -1/2$ in $\SpinC(S^1 \times D^2, \g_4) \cong \Z + 1/2$,
where the isomorphism is given by the $D^2$-Alexander grading.

By the K\"unneth formula,
\[
\hat\HFL(Y',L',P') \cong \hat\HFL(Y,L,P_0) \otimes V'.
\]
This is graded by
\[
\SpinC(M',\g') \cong \SpinC(M,\g_0) \times (\Z + 1/2),
\]
where $(M,\g_0) = \W(Y,L,P_0)$.
We now explicitly compute $\hat\HFL(Y,L,P)$, together with the Maslov and $\SpinC$ gradings;
cf.~\cite[Section~9]{polytope}.
Let
\[
\mc H_0 = (\S_0,\ba_0,\bb_0,\bw_0,\bz_0)
\]
be a weakly admissible diagram of $(Y,L,P_0)$.
We obtain a diagram $\mc H_0 \# \mc H_U$ of $(Y,L,P)$ by taking the connected sum of~$\mc H_0$ and~$\mc H_U$
at base points corresponding to the two sutures that appear when decomposing $(M,\g)$ along~$A$.
As in \cite[Section~6.1]{HFL}, we have
\[
\CF(\mc H_0 \# \mc H_U) \cong \CF(\mc H_0) \otimes \CF(\mc H_U)
\]
for a sufficiently long connected sum neck. The Maslov gradings agree on the two sides.
The only difference is in the $\SpinC$ gradings.
In particular, let $m$ be the meridian of the component of~$L_0$ involved in the annulus gluing.
If $\x_0$, $\y_0 \in \T_{\a_0} \cap \T_{\b_0}$,
then $\s(\x_0 \otimes t) - \s(\y_0 \otimes t) = \s(\x_0) - \s(\y_0)$ and
$\s(\x_0 \otimes b) - \s(\y_0 \otimes b) = \s(\x_0) - \s(\y_0)$,
while $\s(\x_0 \otimes t) - \s(\y_0 \otimes b) = \s(\x_0) - \s(\y_0) + m$.
We write $\s_{\pm 1/2} \in \SpinC(M,\g)$ for the image of gluing
$(\s, \pm 1/2) \in \SpinC(M,\g_0) \times \frac12\Z$. Then $\s_{1/2} - \s_{-1/2} = m$.
Note that
\[
\begin{split}
G(\s,1/2) &:= \hat\HFL\left(Y',L',P',(\s,1/2)\right) \cong \hat\HFL(Y,L,P_0,\s) \otimes t \text{ and} \\
G(\s,-1/2) &:= \hat\HFL\left(Y',L',P',(\s,-1/2)\right) \cong \hat\HFL(Y,L,P_0,\s) \otimes b.
\end{split}
\]
It follows from the above discussion that
\begin{equation} \label{eqn:stab}
G(\s,1/2) \oplus G(\s + m,-1/2) \cong \hat\HFL(Y,L,P,\s_{1/2})
\end{equation}
as Maslov graded groups.

By Lemma~\ref{lem:PhiAiso}, the map $\Phi_\xi$ is an isomorphism.
It maps $G(\s,1/2) \oplus G(\s + m,-1/2)$ to $\hat\HFL(Y,L,P,\s_{1/2})$
according to \cite[Proposition~9.11]{cobordisms}.
It preserves the relative Maslov grading in each relative $\SpinC$ structure
on $(M',\g')$ according to Step~1. If $\s \in \SpinC(M,\g_0)$, then let
$c_{\s,\pm 1/2}$ be the Maslov grading shift of $\Phi_\xi$ on $G(\s,\pm 1/2)$.
(We let $c_{\s,\pm 1/2} = 0$ if $G(\s,\pm 1/2) = 0$.)
By Step~3, we have $c_{\s, 1/2} = c_{\s, -1/2}$.
For a fixed $\s \in \SpinC(M,\g_0)$ such that $G(\s,\pm 1/2) \neq 0$,
let $k$ be the largest integer such that
\[
\hat\HFL(Y,L,P_0,\s - km) \neq 0.
\]
Then $c_{\s - km, -1/2} = 0$, since
\[
G(\s - km, -1/2) \cong \hat\HFL\left(Y,L,P, (\s-(k+1)m)_{1/2}\right)
\]
as absolutely graded groups by equation~\eqref{eqn:stab},
and $\Phi_\xi$ induces an isomorphism of relatively graded groups.
Hence $c_{\s - km, 1/2} = 0$ by Step~3. So the map
\[
\Phi_\xi \colon G(\s-km,1/2) \oplus G(\s - (k-1)m,-1/2) \to \hat\HFL(Y,L,P, (\s-km)_{1/2})
\]
can only be an isomorphism if $c_{\s - (k-1)m, -1/2} = 0$. Repeating this process, we get that
$c_{\s,\pm 1/2} = 0$ for every~$\s$, and $\Phi_\xi$ preserves the absolute Maslov grading.
This concludes the proof of Step~4.

\emph{Step~5: $\Phi_A$ preserves the relative Maslov grading in the general case.}
We replace both $s_1$ and $s_2$ (the sutures that $A_a$ intersects) with three parallel
sutures each. This corresponds to the decoration~$P_1$ of~$L$,
and we write $B$ for $A$ viewed in $(M_1,\g_1) := \W(Y,L,P_1)$.
If we decompose $(M_1,\g_1)$ along $B$, then the result satisfies the conditions of Step~2.
Let $A'$ be the union of the two product annuli such that
decomposing $(M_1,\g_1)$ along $A'$, we obtain $(M,\g)$ and two copies of $(S^1 \times D^2, \g_4)$.
We can also assume that $A \cap A' = \emptyset$.
Let $b$ be the union of two pairs of arcs adapted to the components of $A'$,
such that if $R_0$ is a component of $R(\g_1)$,
and $a \cap R_0 \neq \emptyset$ and $b \cap R_0 \neq \emptyset$, then~$b$ is a subarc of~$a$.
This ensures that $A_a \cap A'_b = \emptyset$. By the composition property of the
contact gluing maps~\cite[Proposition~6.2]{gluingmap}, we have
\[
\Phi_{B_a} \circ \Phi_{B_b'} = \Phi_{A_b'} \circ \left(\Phi_{A_a} \otimes \id_{V' \otimes V'}\right),
\]
where $B_b'$ is~$A_b'$ considered in the manifold obtained by decomposing $(M_1,\g_1)$ along~$B_a$.
Both $\Phi_{A_b'}$ and $\Phi_{B_b'}$ preserve the absolute Maslov grading by Step~4,
and $\Phi_{B_a}$ preserves the relative Maslov grading by Step~2.
It follows that $\Phi_{A_a}$ also preserves the relative Maslov grading.

\emph{Step~6: $\Phi_A$ preserves the absolute grading.}
Since $\Phi_A$ preserves the relative Maslov grading,
it shifts the absolute Maslov grading by some constant~$n$.
The proof of \cite[Proposition~8.2]{HFL} gives the symmetry property
\[
\hat\HFL_i(Y^*,L^*,P^*,j) \cong \hat\HFL_{i-2j}(Y^*,L^*,P^*,-j)
\]
for $(Y^*,L^*,P^*) \in \{\,(Y,L,P) \text{, } (Y',L',P')\,\}$.
It follows that $n = 0$.
\end{proof}


\section{A decorated skein exact triangle}
\label{sec:skein}

Ozsv\'ath and Szab\'o~\cite[Theorem~10.2]{HFK1} defined an oriented skein exact triangle in knot Floer homology.
Recall that $\hat\HFK(Y,L)$ of an $n$-component oriented link~$L$ in a closed, connected, oriented 3-manifold~$Y$ is obtained as follows:
Choose $2n-2$ points $p_1,\dots,p_{n-1}$ and $q_1,\dots,q_{n-1}$ in~$L$ such that if we identify each~$p_i$ and~$q_i$ in~$L$,
then we obtain a connected graph. After attaching 3-dimensional 1-handles to~$Y$ with feet at $p_i$ and $q_i$,
and inside each 1-handle taking the connected sum of the link components, we obtain a knot~$\kappa(L,\{p_i,q_i\})$ in a 3-manifold
$\kappa(Y,\{p_i,q_i\})$ diffeomorphic to $Y\#^{n-1}(S^2 \times S^1)$. Then
\[
\hat\HFK(Y, L) := \hat\HFK\left(\kappa(Y,\{p_i,q_i\}), \kappa(L,\{p_i,q_i\})\right).
\]
The skein exact triangle is stated in terms of this invariant~$\hat\HFK$.
However, the construction of~$\hat\HFK(Y,L)$ is not natural, as it depends on the choice of points~$p_i$ and~$q_i$,
and is not stated in terms of decorated knots or links. In this subsection, we outline a natural version of the Ozsv\'ath-Szab\'o
skein exact triangle in terms of~$\hat\HFL$ that works for multi-pointed and unoriented (but decorated) links.

We first define a natural version of $\hat\HFK(Y,L)$ for an oriented link~$L$.
Fix an ordering $K_1, \dots, K_n$ of the components of $L$, and let $P$
be a decoration such that $|P \cap K_i| = 2$ for every $i \in \{\,1,\dots, n\,\}$.
Then we let $\partial(R_+(P) \cap K_i) = p_i - q_{i-1}$. In particular,
\[
P = \{\,p_1,\dots,p_n,q_0,\dots,q_{n-1}\,\}.
\]
Furthermore, we let $\kappa(P) = \{p_n,q_0\}$ be the corresponding decoration of $\kappa(L,P) := \kappa(L,\{p_i,q_i\})$
in $\kappa(Y,P) := \kappa(Y,\{p_i,q_i\})$.
Then we define
\[
\hat\HFK(Y,L,P) = \hat\HFK\left(\kappa(Y,P), \kappa(L,P), \kappa(P)\right).
\]
This depends on the decoration $P$ and the ordering of the link components.
We have an isomorphism
\[
\Phi_{L,P} \colon \hat\HFL(Y,L,P) \to \hat\HFK(Y,L,P),
\]
obtained by composing the annulus gluing maps from Section~\ref{sec:morse-pa} that glue the sutures corresponding
to $p_i$ and $q_i$ in $Y(L,P)$ for every $i \in \{\,1,\dots,n-1\,\}$.

Let $Y$ be a closed, connected, oriented 3-manifold, and let $(L,P)$ be a decorated link in~$Y$.
Note that now we do not fix an orientation of $L$.
Suppose that~$D$ is an oriented disk embedded in~$Y$ that intersects both~$R_+(P)$ and~$R_-(P)$ transversely in a single point.
Let $K = \partial D$; we view this as a knot in the sutured manifold $(M,\gamma) := Y(L,P)$.
Let~$m$, $l$, and $s$ be oriented simple closed curves on~$\partial \nbd(K)$ such that $m \cdot l = l \cdot s = s \cdot m = 1$,
the curve $m$ is a meridian of~$K$, and $l = D \cap \partial \nbd(K)$ is a longitude.
For $c \in \{\,m,l,s\,\}$,
let $M_c(K)$ denote the manifold obtained by Dehn filling $M \setminus \nbd(K)$ along~$c$.
Then, as noted by Lipshitz (see Theorem~5.1 and the subsequent paragraph in \cite{Lipshitznotes}), there is a surgery exact triangle
\begin{equation} \label{eqn:surgery}
\dots \longrightarrow \SFH(M_m(K),\gamma) \stackrel{e'}{\longrightarrow} \SFH(M_l(K),\gamma) \stackrel{f'}{\longrightarrow} \SFH(M_s(K),\gamma) \longrightarrow \dots
\end{equation}
Here $M_m(K) = M$, so $\SFH(M_m(K),\gamma) = \hat\HFL(L,P)$. In $M_l(K)$ (see Figure~\ref{fig:L}),
consider the annulus $A$ obtained by
capping off $D \setminus \nbd(K)$ with the disk $\{1\} \times D^2 \subset S^1 \times D^2$ whose boundary is glued to~$l$.
We orient $A$ coherently with~$D$.
Then $A$ is a product annulus since $|D \cap R_\pm(P)| = 1$. Decomposing $(M_l(K),\gamma)$ along~$A$, we obtain
a sutured manifold $(M',\gamma')$ that coincides with $Y(L_0,P_0)$, where $L_0$ is obtained from $L \setminus \nbd(D)$ by reconnecting the
endpoints parallel to~$D$, and adding one decoration to~$P$ along each of these parallel arcs.
As explained in Subsection~\ref{sec:morse-pa}, there is a gluing map
\[
\Phi_A \colon \SFH(Y(L_0,P_0)) \to \SFH(M_l(K),\gamma),
\]
which is an isomorphism by Lemma~\ref{lem:PhiAiso}.
Finally, $(M_s(K), \gamma) = Y(L_+,P)$, where $L_+$ is a link in~$Y$ obtained
by adding a full left-handed twist to~$L$ along $D$.
For an illustration showing $L$, $L_0$, and $L_+$, see Figure~\ref{fig:surgeryet}.
\begin{figure}
\begin{center}
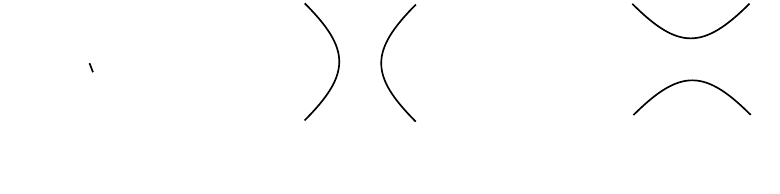
\caption[{Figure \ref{fig:surgeryet}}]{A local picture of the decorated links $(Y, L, P)$,
$(Y, L_0, P_0)$, and $(Y, L_+, P)$ involved in the decorated skein exact triangle.}
\label{fig:surgeryet}
\end{center}
\end{figure}
Hence, we obtain the following natural version of the Ozsv\'ath-Szab\'o skein exact triangle.

\begin{theorem} \label{thm:skein}
Let $Y$ be a closed, connected, oriented 3-manifold, and let $(L,P)$ be a decorated link in~$Y$.
Suppose that~$D$ is an oriented disk embedded in~$Y$ that
intersects both $R_+(P)$ and $R_-(P)$ transversely in a single point, and let $(L_0,P_0)$
and $(L_+,P)$ be the decorated links defined above (see Figure~\ref{fig:surgeryet}).
Then there is an exact triangle
\begin{equation} \label{eqn:exact}
\dots \longrightarrow \hat\HFL(Y,L,P) \stackrel{e}{\longrightarrow}
\hat\HFL(Y,L_0,P_0) \stackrel{f}{\longrightarrow} \hat\HFL(Y,L_+,P) \longrightarrow \dots,
\end{equation}
where $e = \Phi_A^{-1} \circ e'$ and $f = f' \circ \Phi_A$.

If $L$, $L_0$, and $L_+$ are null-homologous and oriented coherently, $S$ is a Seifert surface of $L$ such that
$S \cap D$ consists of a single arc, $S_0$ is the Seifert surface of $L_0$ obtained
by cutting $S$ along $D$, and $S_+$ is obtained by adding a full left-hand twist to $S$
along $D$, then the maps in \eqref{eqn:exact} preserve the Alexander gradings $A_S$, $A_{S_0}$,
and $A_{S_+}$. Furthermore, when the absolute homological gradings are defined (e.g., when $Y$ is a homology sphere),
the maps $e$ and $f$ are homogeneous and decrease the absolute gradings by $1/2$.
\end{theorem}

\begin{proof}
  We prove the second part of the theorem on grading shifts.
  The map $\Phi_A$ does not change the Maslov and the Alexander gradings
  by Proposition~\ref{prop:pa}.
  By~\cite[Theorem~8.2]{HFK1}, for a knot with two decorations,
  all the maps in exact triangle~\eqref{eqn:surgery} preserve the Alexander grading,
  and the maps $e'$ and $f'$ decrease the Maslov grading by~$1/2$.
  A completely analogous proof shows that this also holds for arbitrary decorated links.
\end{proof}

If $|L_0| = |L| \pm 1$ and $P$ consists of two decorations on each component of~$L$ and~$L_+$,
then the above exact triangle is isomorphic to one of the exact triangles~(31) and~(32)
of Ozsv\'ath and Szab\'o~\cite[Theorem~10.2]{HFK1}
(these specialize to the triangles (7) and (8) of~\cite{HFK1} when $Y = S^3$).
The condition $|L_0| = |L| \pm 1$ means that the links $L$, $L_0$, and~$L_+$ can be oriented coherently,
which was an assumption made in~\cite{HFK1}, but which is not necessary for Theorem~\ref{thm:skein}.

Note that Ozsv\'ath and Szab\'o distinguish two cases depending on whether the two strands
of~$L$ (or equivalently, of $L_+$) meeting~$D$ belong to the same component.
Fix an ordering of the components of $L$
such that if two components meet $D$, then they are consecutive. This induces an ordering of the
components of $L_+$. We also get an ordering of the components of $L_0$ such that if exactly one component
of $L$ meets $D$, then the resulting components of $L_0$ are consecutive.

In both cases,
$\hat\HFL(Y,L,P) \cong \hat\HFK(Y,L,P)$ via $\Phi_{L,P}$ and $\hat\HFL(Y,L_+,P) \cong \hat\HFK(Y,L_+,P)$
via $\Phi_{L_+,P}$.
If the two strands meeting $D$ belong to the same component of~$L$, then $|L_0| = |L| + 1$,
and $P_0$ consists of exactly two decorations on each component of~$L_0$, hence
$\hat\HFL(Y,L_0,P_0) \cong \hat\HFK(Y,L_0,P_0)$ via $\Phi_{L_0,P_0}$.
So, in this case, exact sequence~\eqref{eqn:exact} is isomorphic to
\[
\dots \longrightarrow \hat\HFK(Y,L,P) \longrightarrow
\hat\HFK(Y,L_0,P_0) \longrightarrow \hat\HFK(Y,L_+,P) \longrightarrow \dots,
\]
which is the natural version of exact sequence~(31) in~\cite{HFK1}.

On the other hand, if the two strands belong to different components $K_j$ and $K_{j+1}$ of~$L$, then
$P_0$ consists of six decorations on one component of~$L_0$ and two on all other components.
Hence, if we let $P_0' = P \setminus \{p_j,q_j\}$, then
$\hat\HFL(Y,L_0,P_0) \cong \hat\HFK(Y,L_0,P_0') \otimes V' \otimes V'$. Here $V' \otimes V'$ is
a 4-dimensional vector space with a 1-dimensional summand generated by $b \otimes b$ in Maslov and Alexander gradings~$-1$,
a 2-dimensional summand generated by $b \otimes t$ and $t \otimes b$ in Maslov and Alexander gradings~$0$,
and a 1-dimensional summand generated by $t \otimes t$ in Maslov and Alexander gradings~$1$. Ozsv\'ath and Szab\'o~\cite{HFK1}
denoted this vector space~$V$, though in this paper we use that notation for a different purpose.
Hence, in this case, our exact sequence is isomorphic to
\[
\dots \longrightarrow \hat\HFK(Y,L,P) \longrightarrow
\hat\HFK(Y,L_0,P_0') \otimes V' \otimes V' \longrightarrow \hat\HFK(Y,L_+,P) \longrightarrow \dots,
\]
which is a natural version of exact sequence~(32) in~\cite{HFK1}.
Indeed, by \cite[Proposition~9.2]{HFK1}, if $B(0,0)$ is the Borromean knot in $\#^2(S^1 \times S^2)$,
then $V' \otimes V' \cong \hat\HFK(\#^2(S^1 \times S^2), B(0,0))$.
If we consider the surgery exact triangle for $K$ with framings $\{m,l,s\}$
in the complement of the decorated knot $(\kappa(Y,P), \kappa(L,P), \kappa(P))$,
then $l$-surgery gives $\kappa(L_0,P_0') \# B(0,0)$ inside
\[
\kappa(Y,P)_l(K) \approx \kappa(Y,P_0') \# (S^1 \times S^2) \# (S^1 \times S^2).
\]
It follows that
\[
\hat\HFL(Y,L_0,P_0) \cong \hat\HFK\left(Y \# (S^1 \times S^2) \# (S^1 \times S^2),L_0 \# B(0,0), P_0'\right),
\]
which appears in \cite[Theorem~10.2]{HFK1}. The isomorphism is given by decomposing
the sutured manifold complementary to the decorated knot $\kappa(L_0 \# B(0,0), P_0')$
along $A$ and the product annuli corresponding to the cores of the 1-handles attached along~$P$.

Unoriented skein exact triangles in link Floer homology have appeared in the literature,
due to Manolescu~\cite{manolescuexact}, Manolescu and Ozsv\'ath~\cite{manolescu2007khovanov}, and Wong~\cite{wong}. However, such unoriented triangles relate the link Floer homology of a link with that of its two smoothings at a crossing,
as in Figure~\ref{fig:smoothings}. The result of Theorem~\ref{thm:skein} is more similar to Ozsv\'ath and Szab\'o's oriented skein exact triangle~\cite{HFK1}.

\section{Elementary link cobordisms} \label{sec:elementary}
\label{sec:morse}

In this section, we compute the maps induced by elementary
decorated link cobordisms where the ambient 4-manifold is of the form $Y \times I$.
Recall that a decorated link is a triple $(Y, L, P)$, where $Y$ is a 3-manifold,
$L$ is a link in~$Y$, and $P$ is a set of points on~$L$; see Definition \ref{def:dl}.
From now on, all decorated links and decorated link cobordisms will be oriented.

\begin{definition}
A decorated cobordism $(Y \times I, F,\sigma)$ from $(Y,L,P_0)$ to $(Y,L,P_1)$
is a \emph{stabilization} (resp.~\emph{destabilization}) if
\begin{itemize}
\item  $F = L \times I$,
\item the only critical point of the height function $h \colon L \times I \to I$ on $\sigma$
is a non-degenerate minimum (resp.~maximum),
\item all but one component of $\sigma$ is of the form $\{x\} \times I$ for some $x \in L$.
\end{itemize}
The orientation of $F$ induces a splitting $F = R_+(\sigma) \cup R_-(\sigma)$
as in Definition~\ref{def:linkcob}. If the bigon component of $F \setminus \sigma$
lies in $R_+(\sigma)$, then we say that $(Y \times I, F,\sigma)$ is a positive stabilization (resp.~destabilization),
and is negative otherwise. Note that this depends on the orientation of~$F$.
\end{definition}

\begin{definition}
\label{def:birth}
A \emph{birth} cobordism from $(Y, L_0, P_0)$ to $(Y, L_1, P_1)$ is a decorated cobordism
$(Y \times I, F, \sigma)$ such that
\begin{itemize}
\item{$(Y, L_1, P_1) = (Y \# S^3, L_0 \sqcup U_1, P_0 \sqcup P_{U_1})$,
where $(S^3, U_1, P_{U_1})$ is the unknot with two decorations,}
\item{$F = F_0 \sqcup D \subseteq Y \times I$, where $F_0 = L_0 \times I$
and $D$ is a disk with boundary~$U_1$ such that
the height function~$h|_D$ has a single critical point of index~0,}
\item{if $\sigma_0 = \sigma \cap F_0$ and $\sigma_D = \sigma \cap D$, then
$\sigma_0$ consists of $|P_0|$ vertical lines, one for each component of $L_0 \setminus P_0$,
and $\sigma_D$ is a single arc on $D$ such that $\text{Crit}(h|_{\sigma_D}) = \text{Crit}(h|_D)$.}
\end{itemize}
\end{definition}

\begin{definition}
\label{def:death}
A \emph{death} cobordism from $(Y, L_0, P_0)$ to $(Y, L_1, P_1)$ is a decorated cobordism
$(Y \times I, F, \sigma)$ such that
\begin{itemize}
\item{$(Y, L_0, P_0) = (Y \# S^3, L_1 \sqcup U_1, P_1 \sqcup P_{U_1})$,
where $(S^3, U_1, P_{U_1})$ is the unknot with two decorations,}
\item{$F = F_1 \sqcup D \subseteq Y \times I$, where $F_1 = L_1 \times I$
and $D$ is disk with boundary~$U_1$
such that the height function~$h|_D$ has a single critical point of index~2,}
\item{if $\sigma_1 = \sigma \cap F_1$ and $\sigma_D = \sigma \cap D$, then
$\sigma_1$ consists of $|P_1|$ vertical lines, one for each component of $L_1 \setminus P_1$,
and $\sigma_D$ is a single arc on $D$ such that $\text{Crit}(h|_{\sigma_D}) = \text{Crit}(h|_D)$.}
\end{itemize}
\end{definition}

Notice that, by turning a birth (death) cobordism upside down, one obtains
a death (birth) cobordism.

\begin{definition}
\label{def:mergesaddle}
A \emph{saddle} cobordism from $(Y, L_0, P_0)$ to $(Y, L_1, P_1)$ is a decorated cobordism
$(Y \times I, F, \sigma)$ such that
\begin{itemize}
\item{$|P_1| = |P_0| - 2$,}
\item{the diffeomorphism $f \colon Y \times \{0\} \to Y \times \{1\}$ defined by $f(x,0) = (x,1)$ maps $L_0$
to $L_1$, except in the neighbourhood of a small square $S \approx I \times I$ in~$Y$
that intersects~$L_0$ and~$L_1$ in the arcs $I \times \partial I$ and $\partial I \times I$, respectively,
as illustrated in Figure~\ref{fig:square}; furthermore, $f^{-1}(P_1) \subseteq P_0$,}
\item{the surface $F$ in $Y \times I$ is a product outside a neighbourhood of $S \times I$,
and it interpolates between $L_0$ and $L_1$ inside $S \times I$, as shown in Figure~\ref{fig:square},}
\item{if $p_Y \colon Y \times I \to Y$ is the projection, then
$p_Y(\sigma) \cap S$ consists of the arc shown in green in Figure~\ref{fig:square},
and $p_Y(\sigma) \setminus S$ is a small translate of $P_0$ such that
\[
\pi_0(\de \sigma) \to \pi_0\left((L_0 \setminus P_0)\sqcup(L_1 \setminus P_1)\right)
\]
is a bijection. In particular, $\text{Crit}(h|_{\sigma}) = \text{Crit}(h|_F)$, and
$\text{Crit}(h|_{\sigma})$ is a local maximum.}
\end{itemize}
We say that $(Y \times I, F,\sigma)$ is a \emph{merge} saddle if $|L_1| = |L_0| - 1$, and
it is a \emph{split} saddle if $|L_1| = |L_0| + 1$.
\end{definition}

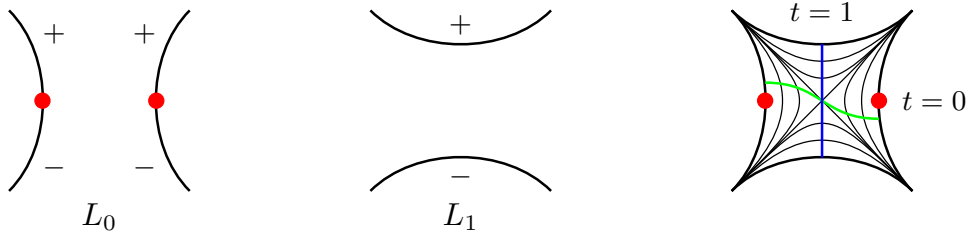
\begin{figure}[t]
\resizebox{0.95\textwidth}{!}{
\begin{tikzpicture}[cross line/.style={preaction={draw=white, -, line width=8pt}}]

\def\u{1cm};
\def\d{\u/2};
\def\e{\u/4};
\def\h{\u*0.2};
\def\rad{\u/12};
\def\cdx{4*\u};

\begin{scope}[shift={(-\cdx,0)}, rotate =90]
\begin{scope}[rotate=90]
\draw[thick, cross line, -] (-\u,-\u) .. controls (-\d,-\d) and (-\d,\d) .. (-\u,\u);
\draw[thick, cross line, -] (\u,\u) .. controls (\d,\d) and (\d,-\d) .. (\u,-\u);
\draw[color=red, fill=red] (-\u*0.63, 0) circle (\rad);
\draw[color=red, fill=red] (\u*0.63, 0) circle (\rad);
\end{scope}
\draw (-\d,\d) node[anchor=north] {\footnotesize $-$};
\draw (-\d,-\d) node[anchor=north] {\footnotesize $-$};
\draw (\d,\d) node[anchor=south] {\footnotesize $+$};
\draw (\d,-\d) node[anchor=south] {\footnotesize $+$};
\end{scope}
\draw (-\cdx,-\u) node [anchor=north] {\small $L_0$};

\begin{scope}[shift={(0,0)}, rotate=90]
\draw[thick, cross line, -] (-\u,-\u) .. controls (-\d,-\d) and (-\d,\d) .. (-\u,\u);
\draw[thick, cross line, -] (\u,\u) .. controls (\d,\d) and (\d,-\d) .. (\u,-\u);
\draw (-\d*1.2,0) node[anchor=north] {\footnotesize $-$};
\draw (\d*1.2,0) node[anchor=south] {\footnotesize $+$};
\end{scope}
\draw (0,-\u) node [anchor=north] {\small $L_1$};

\begin{scope}[shift={(\cdx,0)}, rotate=90]

\begin{scope}[rotate=90]
\draw[thick, -] (-\u,-\u) .. controls (-\d,-\d) and (-\d,\d) .. (-\u,\u);
\draw[thick, -] (\u,\u) .. controls (\d,\d) and (\d,-\d) .. (\u,-\u);
\draw[-] (-\u,-\u) .. controls (0,0) and (0,0) .. (-\u,\u);
\draw[-] (\u,\u) .. controls (0,0) and (0,0) .. (\u,-\u);
\draw[-] (-\u,-\u) .. controls (-\e,-\e) and (-\e,\e) .. (-\u,\u);
\draw[-] (\u,\u) .. controls (\e,\e) and (\e,-\e) .. (\u,-\u);
\end{scope}

\draw[thick, -] (-\u,-\u) .. controls (-\d,-\d) and (-\d,\d) .. (-\u,\u);
\draw[thick, -] (\u,\u) .. controls (\d,\d) and (\d,-\d) .. (\u,-\u);
\draw[color=red, fill=red] (0,-\u*0.63) circle (\rad);
\draw[color=red, fill=red] (0,\u*0.63) circle (\rad);

\draw[-] (-\u,-\u) .. controls (0,0) and (0,0) .. (-\u,\u);
\draw[-] (\u,\u) .. controls (0,0) and (0,0) .. (\u,-\u);

\draw[-] (-\u,-\u) .. controls (-\e,-\e) and (-\e,\e) .. (-\u,\u);
\draw[-] (\u,\u) .. controls (\e,\e) and (\e,-\e) .. (\u,-\u);

\draw[-] (-\u,-\u) -- (\u,\u);
\draw[-] (\u,-\u) -- (-\u,\u);

\draw (0,-\u*0.75) node [anchor=west] {\footnotesize $t=0$};
\draw (\u*0.75,0) node [anchor=south] {\footnotesize $t=1$};

\draw[thick, color=green, -] (\h, \u*0.63) .. controls (\h, 0) and (-\h, 0) .. (-\h, -\u*0.63);
\draw[thick, color=blue] (-\u*0.63,0) -- (\u*0.63,0);
\end{scope}

\end{tikzpicture}
}
\caption{This figure illustrates a saddle cobordism.
The first two pictures show how $L_0$ and $L_1$ differ in a neighbourhood
of the square~$S$. The last picture shows how the square $S$ is embedded in
$Y \times I$, interpolating from $L_0$ when $t=0$ to $L_1$ when $t=1$.
The green arc in the last picture is the projection of the decoration $\sigma$ in $S$.}
\label{fig:square}
\end{figure}

\begin{definition}
Let $(Y \times I, F,\sigma)$ be a decorated link cobordism from $(Y, L_0, P_0)$ to $(Y, L_1, P_1)$,
and let $h \colon Y \times I \to I$ be the height function.
Then we say that $(Y \times I, F, \sigma)$ is an \emph{isotopy} if $h|_F$ and $h|_\sigma$
have no critical points.
\end{definition}

Let $\DLink'$ be the subcategory of $\DLink$ whose objects are decorated links,
and whose morphisms are decorated link cobordisms $(X,F,\sigma)$ where $X = Y \times I$
for some closed oriented 3-manifold~$Y$.

\begin{prop} \label{prop:generation}
The category $\DLink'$ is generated by stabilizations, destabilizations, births,
deaths, saddles, and isotopies.
\end{prop}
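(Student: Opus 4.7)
The plan is a Morse-theoretic decomposition with respect to the height function $h \colon S^3 \times I \to I$. Given an arbitrary decorated link cobordism $\mc X = (F, \sigma)$ from $(L_0, P_0)$ to $(L_1, P_1)$, the first step is to put the pair $(F, \sigma)$ in generic position rel~$\partial$: after a small ambient isotopy supported away from the boundary, both $h|_F$ and $h|_\sigma$ are Morse functions. Using the fact that any isotopy of $\sigma$ on $F$ extends to an ambient isotopy of $(F, \sigma)$ in $S^3 \times I$, and hence produces an equivalent cobordism in $\DLink$, I would further arrange that every critical point of $h|_F$ lies on $\sigma$ in a standard local model: at an index-$0$ or index-$2$ critical point the arc $\sigma \cap F$ has a coincident extremum of the same index, while at an index-$1$ critical point $\sigma$ takes the form of the green arc in Figure~\ref{fig:square} and has a local maximum of $h|_\sigma$. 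All remaining critical points of $h|_\sigma$ are made isolated local extrema occurring at regular points of $h|_F$, and all critical values are made distinct except for the coincidences just imposed.

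Next, I would pick regular values $0 = t_0 < t_1 < \dots < t_n = 1$ interleaving the critical values of $h|_F \cup h|_\sigma$, producing a decomposition $\mc X = \mc X_n \circ \cdots \circ \mc X_1$, where $\mc X_i = \mc X \cap (S^3 \times [t_{i-1}, t_i])$. Each slab $\mc X_i$ contains at most one critical point of $h|_F$ and at most one further critical point of $h|_\sigma$, with coincidences only of the enforced kinds. Matching each $\mc X_i$ against the definitions given earlier in the section is then routine: slabs with no critical points are isotopies via gradient flow of $h$; slabs with an index-$0$ (resp.~index-$2$) critical point of $h|_F$ are birth (resp.~death) cobordisms; slabs with an index-$1$ critical point of $h|_F$ are saddle cobordisms, classified as merge, split, or unorientable according to whether $|L|$ decreases, increases, or is unchanged; and slabs containing only an extremum of $h|_\sigma$ are (de)stabilisations, with sign dictated by which side of $\sigma$ contains the bigon component of $F \setminus \sigma$.

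The main obstacle is the genericity step in the first paragraph, in particular the claim that $\sigma$ can always be isotoped on $F$ to pass through each critical point of $h|_F$ in the prescribed local form. Locally, this amounts to the following statement: on a standard disk neighbourhood of a Morse critical point of $h|_F$ of index $0$, $1$, or $2$, any properly embedded $1$-manifold that divides the neighbourhood into two subsurfaces can be isotoped, within its isotopy class on~$F$, so as to pass through the critical point in the model shape compatible with the ambient splitting $F \setminus \sigma = R_+(\sigma) \sqcup R_-(\sigma)$. This reduces to the classification of proper arcs on a disk (and a pair of proper arcs on a saddle disk) up to boundary-fixing isotopy, combined with a standard transversality argument ensuring that all remaining critical values of $h|_\sigma$ can be kept generic. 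Once this local normal form is achieved, each slab $\mc X_i$ visibly satisfies one of the elementary definitions, which completes the decomposition.
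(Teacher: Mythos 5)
Your strategy is the same as the paper's: make $h|_F$ and $h|_\sigma$ Morse, isotope $\sigma$ so that it passes through the critical points of $h|_F$ (with a local maximum of $h|_\sigma$ at each saddle point), and slice at regular values interleaving the critical values. However, your list of genericity conditions omits one that the slicing step genuinely needs: at every regular value $t$ of $h|_\sigma$, \emph{every} component of $h^{-1}(t)\cap F$ must contain at least two points of $h^{-1}(t)\cap\sigma$. Your conditions only constrain $\sigma$ near the critical points of $h|_F$ and near its own extrema, and say nothing about level components of $F$ away from these; without the extra condition the intermediate slices need not be decorated links, so the slabs $\mc X_i$ need not be morphisms of $\DLink$ at all.

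Here is a concrete failure. Let one component of $F$ be a product annulus $K\times I$ (no critical points of $h|_F$), with $K$ carrying two decorations at each end, and let $\sigma$ on this component consist of one arc with both endpoints on $K\times\{0\}$ whose single maximum lies at height $1/3$, and one arc with both endpoints on $K\times\{1\}$ whose single minimum lies at height $2/3$. This is a legitimate decorated cobordism and satisfies every condition you impose (the two extrema of $h|_\sigma$ occur at regular points of $h|_F$, all critical values distinct), yet the level circles at heights in $(1/3,2/3)$ miss $\sigma$ entirely; slicing there produces a ``decorated link'' with an undecorated component and a slab whose dividing set fails the bijectivity requirement of Definition~\ref{def:dlc}, so your decomposition leaves the category. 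The remedy is exactly the additional requirement in the paper's proof: after achieving the local normal forms, isotope $\sigma$ further (here, push the cap up and the cup down until they interleave in height) so that each component of every regular level of $F$ meets $\sigma$ in at least two points; the intermediate decorations $P_j$ are then chosen alternating with $\sigma\cap L_j$, and each slab, after the routine straightening to a product outside a neighbourhood of its critical point, matches one of Definitions~\ref{def:birth}--\ref{def:mergesaddle} or is a (de)stabilization or isotopy. With that addition your argument coincides with the paper's proof.
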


\begin{proof}
\begin{figure}
\includegraphics{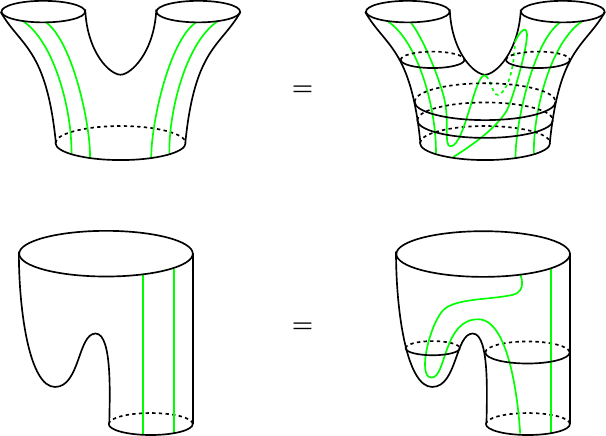}
\caption{Isotoping the dividing set $\sigma$ such that it passes through the critical points of
the height function on~$F$, after which it can be written as a product of elementary cobordisms.}
\label{fig:generation}
\end{figure}
By a slight perturbation of~$F$,
we can assume $h|_F$ is a generic Morse functions.
Then isotope $\sigma$ such that it passes through the critical points of $h|_F$,
and such that $h|_\sigma$ is also a generic Morse function with $\text{Crit}(h|_F) \subset \text{Crit}(h|_\sigma)$.
We also require  that if~$p$ is a saddle point of $h|_F$, then $h|_\sigma$
has a local maximum. Finally, if $t \in I$ is a regular value of $h|_{\sigma}$, then
each component of $h^{-1}(t) \cap F$ contains at least two points of $h^{-1}(t) \cap \sigma$.
For an illustration, see Figure~\ref{fig:generation}.
The cobordism that we obtain is equivalent to the original.
Then let $c_1 < c_2 < \dots < c_n$ be an enumeration of the critical values of~$h|_\sigma$,
where $n = |\text{Crit}(h|_\sigma)|$.
We write $c_0 = 0$ and $c_{n+1} = 1$, and let
\[
\eps = \min\{\, (c_i - c_{i-1})/3 \,\colon\, 1 \le i \le n+1 \,\}.
\]
We set $t_{2i} = c_i - \eps$ and $t_{2i+1} = c_i + \eps$ for $i \in \{1, \dots, n\}$,
and write $t_1 = 0$ and $t_{2n+2} = 1$.

Fix an orientation of $F$ and
a splitting $F = R_+(\sigma) \cup R_-(\sigma)$ as in Definition~\ref{def:linkcob}.
Let $L_j = F \cap h^{-1}(t_j)$, and let $P_j$ be a decoration of~$L_j$
such that its points alternate with $\sigma \cap L_j$.
Then $(F,\sigma)$ is the product of the cobordisms
\[
(F_j,\sigma_j) = (F \cap h^{-1}[t_j,t_{j+1}], \sigma \cap h^{-1}[t_j,t_{j+1}])
\]
for $j \in \{1, \dots, 2n+1\}$ from $(L_j,P_j)$ to $(L_{j+1},P_{j+1})$.
The splitting $L_j = R_+(P_j) \cup R_-(P_j)$ is chosen such that
$\partial R_+(\sigma_j)$ goes from $R_-(P_j)$ to $R_+(P_j)$ as it passes~$P_j$,
where $R_+(\sigma_j) = R_+(\sigma) \cap F_j$.

When $j$ is even,
$h|_{\sigma_j}$ has a single critical point~$p_j$ (that is possibly also a critical point of $h|_{F_j}$),
and we can isotope $F$ and $\sigma$ such that
$F_j$ and~$\sigma_j$ are products outside a small neighbourhood of the critical point,
and all of~$F_j$ is a product if $p_j \not\in \text{Crit}(h|_{F_j})$.
Then every $(F_j,\sigma_j)$ is an elementary cobordism. In particular, it is
an isotopy if and only if $j$ is odd.
\end{proof}

\subsection{Link cobordism maps induced by isotopies} \label{sec:isotopy}

Let $\mc X = (Y \times I, F,\sigma)$ be an isotopy from $(Y, L_0, P_0)$ to $(Y, L_1, P_1)$,
and let $\sigma' \subset F$ be a push-off of $\sigma$ in a normal direction
such that $\partial \sigma' = P_0 \cup P_1$.
If we set $L_t = F \cap h^{-1}({t})$ and
$P_t = \sigma' \cap h^{-1}({t})$ for $t \in I$, then there is
an ambient isotopy $d_t \colon Y \to Y$ for $t \in I$
such that $L_t = d_t(L_0)$ and $P_t = d_t(P_0)$. Then $d_1$
is well-defined up to isotopy, and it follows from
\cite[Lemma~2.22]{surgery} that the link cobordism map $F_{\mc X}$ agrees with the diffeomorphism map
\[
(d_1)_* \colon \hat\HFL(Y,L_0,P_0) \to \hat\HFL(Y,L_1,P_1)
\]
defined in~\cite[Definition~2.42]{naturality}. In particular, it preserves both the homological
and the Alexander gradings.

\subsection{The maps induced by stabilizations and destabilizations}

\begin{proposition} \label{prop:stab}
Let $\mc S = (Y \times I, F, \sigma)$ be a stabilisation from $(Y,L,P_0)$ to $(Y,L,P_1)$.
We denote by $V'$ the bigraded vector space $\F_2 \oplus \F_2$,
where the first summand is generated by the element~$b$
in Maslov and Alexander gradings~$-1/2$, and the second summand is generated by the element~$t$
in Maslov and Alexander gradings~$1/2$.
Then, there is a canonical isomorphism
\[
I_{\mc S} \colon \HFLh(Y,L,P_0) \otimes V' \stackrel{\sim}{\longrightarrow} \HFLh(Y,L,P_1)
\]
that preserves the bigrading when defined, such that the link cobordism map
\[
F_{\mc S} \colon \HFLh(Y,L,P_0) \to \HFLh(Y,L,P_1)
\]
factorises as $I_{\mc S} \circ s_{\mc S}$, where
\[
s_{\mc S} \colon \HFLh(Y,L,P_0) \to \HFLh(Y,L,P_0) \otimes V'
\]
is given by $s_{\mc S}(x) = x \otimes t$ in case of a positive stabilisation, and $s_{\mc S}(x) = x \otimes b$ in case of a negative stabilisation.
\end{proposition}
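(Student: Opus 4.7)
The plan is to exploit that $F = L \times I$ is a product, so the underlying 4-manifold $W = \Bl_F(S^3 \times I)$ is diffeomorphic to $M \times I$, where $M = \Bl_L(S^3)$. Using the decomposition of Remark~\ref{rem:decomposition}, I would write $\mc S = \mc S^s \circ \mc S^b$. The intermediate sutured manifold $N = M_0 \cup (-Z)$ is then $M$ topologically, equipped with the new sutures~$\gamma_1$. The special cobordism $\mc S^s \colon (M,\gamma_1) \to (M_1,\gamma_1)$ has trivial underlying 4-manifold $M \times I$ with no handles, so it induces the identity on~$\SFH$. Consequently, $F_{\mc S}$ equals the Honda-Kazez-Mati\'c contact gluing map $\Phi_\xi$ associated to the inclusion $(-M_0,-\gamma_0) \hookrightarrow (-N,-\gamma_1)$, where $\xi = \xi_\sigma$ is the $S^1$-invariant contact structure on~$Z$ determined by the dividing set~$\sigma$ on~$F$.

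Next, I would localize the computation. The set $\sigma$ agrees with the vertical product $P_0 \times I$ everywhere on $F = L \times I$ except inside a small disk neighbourhood of one short arc of~$L$, where it makes the single U-shaped excursion responsible for the new bigon. Correspondingly, $\xi$ is $I$-invariant away from a standard ball $B^3 \subset Z$, and $\Phi_\xi$ acts as the identity outside of this ball. Inside, the contact 3-ball attached to~$\partial M$ produces exactly two additional parallel sutures on one torus boundary component of~$M$: this is the standard bypass half-disk attachment, equivalently a quasi-stabilization in the sense of Zemke~\cite{quasistab}. Combining this local picture with the identity action on the complement, $\Phi_\xi$ takes the form $x \mapsto x \otimes w$ for some $w \in \SFH(B^3,\gamma') \cong W$, where the isomorphism $\SFH(B^3,\gamma') \cong W$ is canonical once a Heegaard diagram for the local model is fixed.

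The explicit local computation of the bypass/quasi-stabilization gluing map then identifies $w = t$ when the bigon lies in $R_+(\sigma)$ (positive stabilization) and $w = b$ when it lies in $R_-(\sigma)$ (negative stabilization); this simultaneously yields the canonical isomorphism $I_{\mc S}$ and the factorization $F_{\mc S} = I_{\mc S} \circ s_{\mc S}$. The bigradings of $t$ and $b$ are then read off from Theorem~\ref{thm:grading}: since $\chi(F) = 0$, the Maslov and Alexander shifts of $F_{\mc S}$ both equal $(\chi(R_+(\sigma)) - \chi(R_-(\sigma)))/2 = \pm 1/2$, matching the proposition. The main obstacle is the explicit local computation of the bypass gluing map—drawing the appropriate sutured Heegaard diagram for the local~$B^3$ model and verifying that the contact class maps to the correct generator of~$W$. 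The most efficient route is to cite the existing quasi-stabilization formula from~\cite{quasistab} or the Honda-Kazez-Mati\'c bypass formula, after which the remainder of the argument is essentially formal.
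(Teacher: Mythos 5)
Your overall skeleton matches the paper's: since $F = L \times I$, the special cobordism $\W^s$ is a product and induces the identity, so everything reduces to computing the contact gluing map; one then splits off a local piece and identifies the local contribution as a contact class, with the sign of the stabilization read off from which side of $\sigma$ the bigon lies. However, the two load-bearing steps are exactly where your proposal has gaps. First, the claim that $\Phi_\xi$ ``acts as the identity outside a ball'' and hence has the form $x \mapsto x \otimes w$ is not formal: to get a \emph{canonical} target $\hat\HFL(L,P_0)\otimes W$ and the isomorphism $I_{\mc S}$, the paper decomposes $(N,\gamma_1)$ along a carefully chosen product annulus $A = a \times S^1$ (with $a$ parallel to the critical component $\sigma_0$ and separating off the bigon together with two points of $P_1$), obtaining $(M,\gamma_0) \sqcup (D^2\times S^1,\gamma)$ with four longitudinal sutures; it then uses the composition law for gluing maps and the fact that gluing along a product annulus is an isomorphism, plus the computation $\SFH(D^2\times S^1,\gamma)\cong W$ with the $\SpinC$ splitting pinned down by the orientation of the $S^1$ factor. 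Your ``canonical once a Heegaard diagram for the local model is fixed'' is precisely not canonical, and your local model (a $B^3$ in $Z$ producing a ``bypass half-disk attachment'') is not an accurate description of the contact layer that adds two parallel sutures to a boundary torus.

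Second, the identification $w = t$ versus $w = b$ is the actual content, and deferring it to Zemke's quasi-stabilization formula or an ``HKM bypass formula'' does not close the argument in this framework: Zemke's maps are defined diagrammatically by adding a basepoint pair, and identifying them with the Honda--Kazez--Mati\'c gluing map used to define $F_{\mc S}$ is itself a nontrivial comparison not available off the shelf here, while HKM do not provide a formula of the kind you invoke. The paper instead observes that $s_{\mc S}(x) = x \otimes \EH(-\xi|_{D^2\times S^1})$ and determines this contact element by its grading, namely $\chi(R_+)-\chi(R_-)$ of the dividing set on $D^2\times\{\mathrm{pt}\}$, which is $+1$ or $-1$ according to the sign of the stabilization. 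Finally, note that appealing to Theorem~\ref{thm:grading} to fix the bigradings is circular in the paper's logical order: that theorem's proof for stabilizations uses exactly the present proposition (the grading of the contact element $t$ or $b$), so the grading argument must be run at the level of the contact class, as above, not imported from the global grading-shift formula.
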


\begin{proof}
Let $\W = \W(Y \times I, F, \sigma) = (W,Z,\xi)$ be the sutured manifold cobordism
complementary to $(Y \times I, F,\sigma)$ from the sutured manifold $(M,\gamma_0) = \W(Y,L,P_0)$
complementary to~$(Y,L,P_0)$ to the sutured manifold $(M,\gamma_1) = \W(Y,L,P_1)$ complementary to~$(Y,L,P_1)$.
Here $Z \cong -F \times S^1$ is oriented as the boundary of~$W$,
and hence the orientation of~$F$ induces an orientation of the~$S^1$ factor.
This is the orientation for which $S^1$ has linking number one with~$F$.

By definition, the cobordism map $F_\W$ is the composition of the gluing map
\[
\Phi_{-\xi} \colon \SFH(M,\gamma_0) \to \SFH(N,\gamma_1),
\]
where $N = M \cup (-Z)$, and a special cobordism map
\[
F_{\W^s} \colon \SFH(N,\gamma_1) \to \SFH(M,\gamma_1).
\]
As $\W^s$ is a product, it induces the identity map. So, it suffices to compute $\Phi_{-\xi}$.

We denote by $\sigma_0$ the component of $\sigma$ that contains the critical point of the height function $L \times I \to I$.
Let $a$ be a properly embedded arc in~$F \setminus \sigma$ that is parallel to~$\sigma_0$, is disjoint from the bigon component of $F \setminus \sigma$, and such that there are two points of~$P_1$ in the rectangular region between $a$ and $\sigma_0$
that lie on opposite edges of the rectangle; see the right-hand side of Figure~\ref{fig:merge}.
Then $A = a \times S^1$ is a product annulus in $(N,\gamma_1)$.
If we decompose $(N,\gamma_1)$ along~$A$, then we obtain the disjoint union of $(M,\gamma_0)$
and the sutured manifold $(D^2 \times S^1,\gamma)$, where $\gamma$ consists of four longitudinal sutures.
By \cite[Proposition~9.1]{polytope}, we have $\SFH(D^2 \times S^1,\gamma) \cong V'$.
The isomorphism is uniquely determined by the orientation of the $S^1$ factor,
which gives an orientation of $H_1(D^2 \times S^1)$, and hence allows one to distinguish between
the two $\SpinC$ structures in which the two $\F_2$ summands are supported.
Hence, by Lemma~\ref{lem:PhiAiso}, the gluing map $\Phi_A$ gives a canonical isomorphism
\begin{equation} \label{eqn:gluing}
I_{\mc S} \colon \HFLh(L,P_0) \otimes V' \stackrel{\sim}{\longrightarrow} \HFLh(L,P_1).
\end{equation}
Note that $I_{\mc S}$ only depends on the choice of the product annulus~$A$,
which in turn depends only on the bigon region in the stabilisation.
By Proposition~\ref{prop:pa}, the map $I_{\mc S}$ preserves the Alexander and Maslov gradings
when they are defined.

We write the gluing map $\Phi_{-\xi}$ as a composition of two gluing maps, which is possible according to \cite[Proposition~6.2]{gluingmap}. The first gluing map $s_{\mc S}$ corresponds to the sutured submanifold $(-M,-\gamma_0)$ of $(-M,-\gamma_0) \sqcup (-D^2 \times S^1,-\gamma)$ with the contact structure $-\xi|_{D^2 \times S^1}$ on the difference.
The second gluing map $I_{\mc S}$, which is the one in equation~\eqref{eqn:gluing}, corresponds to the sutured submanifold $(-M,-\gamma_0) \sqcup (-D^2 \times S^1,-\gamma)$ of $(-N,-\gamma_1)$, using the contact structure $-\xi$ on $(-N) \setminus ((-M) \sqcup (-D^2 \times S^1))$.

By construction, the gluing map $s_{\mc S}$ maps $x \in \HFLh(L,P_0)$ to
\[
x \otimes \EH(-\xi|_{D^2 \times S^1}) \in \HFLh(L,P_0) \otimes \SFH(D^2 \times S^1,\gamma).
\]
Recall that $-\xi$ is a positive contact structure on $(-D^2 \times S^1,-\gamma)$.
In case of a positive stabilisation, the dividing set of $-\xi$ on $-D^2 \times \{p\}$ for $p \in S^1$
is given by the left-hand side of  \cite[Figure~14]{gluingmap},
while it is given by the right-hand side in case of a negative stabilisation.
As explained there, the evaluation of the relative first Chern class of the $\SpinC$ structure
of the contact element $\EH(-\xi|_{D^2 \times S^1})$ on the surface $D^2 \times \{p\}$
is given by $\chi(R_+) - \chi(R_-)$, where $R_+$ and $R_-$ are the positive and negative
regions of the complement of the dividing set on $D^2 \times \{p\}$, respectively.
Hence $\EH(-\xi|_{D^2 \times S^1}) = t$ in case of a positive stabilisation
and $\EH(-\xi|_{D^2 \times S^1}) = b$ in case of a negative stabilisation.
\end{proof}

\begin{remark} \label{rem:annulus}
\label{rem:otherproductannulus}
Consider the arc~$a$ on the left-hand side of Figure~\ref{fig:merge},
with the two basepoints lying on the same side of the rectangular region between $\sigma_0$
and $a$. Then the product annulus $A' = a \times S^1$ induces a different isomorphism
\[
I_{\mc S}' \colon \HFLh(L,P_0) \otimes V' \stackrel{\sim}{\longrightarrow} \HFLh(L,P_1)
\]
than the map $I_{\mc S}$ that appears in Proposition~\ref{prop:stab}.
However, if we decompose $(N, \gamma_1)$ along $A'$, then the restriction
of $-\xi$ to the $D^2 \times S^1$ component has the same contact element
in $\SFH(D^2 \times S^1, \gamma)$ as for~$A$, since the convex surface $D^2 \times \{p\}$
carries the same dividing set and decomposition into $R_+$ and $R_-$.
\end{remark}

\begin{figure}
\begin{center}
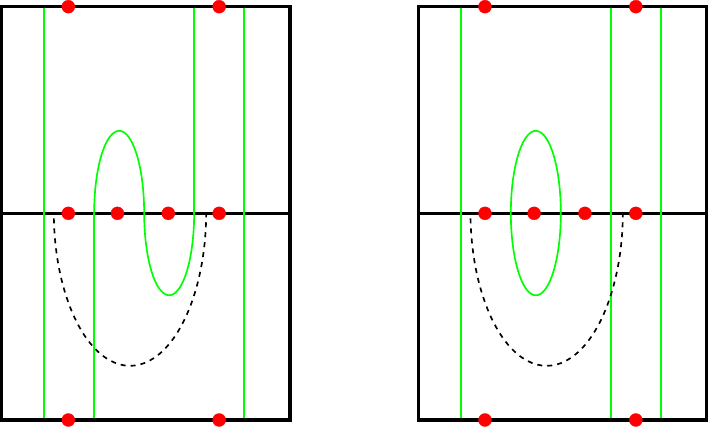
\caption{The composition of a negative stabilisation and positive destabilisation on the left, and the composition of a positive stabilisation and a positive destabilisation on the right.}
\label{fig:merge}
\end{center}
\end{figure}

\begin{proposition}
\label{prop:destab}
Suppose that $\mc D = (Y \times I, F,\sigma)$ is a destabilisation from $(Y,L,P_0)$ to $(Y,L,P_1)$.
Let $V'$ be as in Proposition~\ref{prop:stab}.
Then, there is a canonical isomorphism
\[
I_{\mc D} \colon \HFLh(Y,L,P_0) \stackrel{\sim}{\longrightarrow} \HFLh(Y,L,P_1) \otimes V'
\]
that preserves the bigrading when defined, such that the link cobordism map
\[
F_{\mc D} \colon \HFLh(Y,L,P_0) \to \HFLh(Y,L,P_1)
\]
factorises as $d_{\mc D} \circ I_{\mc D}$, where
\[
d_{\mc D} \colon \HFLh(Y,L,P_1) \otimes V' \to \HFLh(Y,L,P_1)
\]
is given by $d_{\mc D}(x \otimes b) =  x$ and $d_{\mc D}(x \otimes t) = 0$ in case of a positive destabilisation, and by $d_{\mc D}(x \otimes t) = x$ and $d_{\mc D}(x \otimes b) = 0$ in case of a negative destabilisation.
\end{proposition}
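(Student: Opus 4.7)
The plan is to mirror the proof of Proposition~\ref{prop:stab} step by step, with the key difference that the product annulus decomposition is now applied to the source sutured manifold rather than to the target. Let $\mc W = \W(F, \sigma) = (W, Z, \xi)$ be the sutured manifold cobordism from $(M, \gamma_0) = \W(L, P_0)$ to $(M, \gamma_1) = \W(L, P_1)$. Since $F = L \times I$ is a product, exactly as in the stabilization proof I decompose $\mc W = \mc W^s \circ \mc W^b$, with $\mc W^s$ a product special cobordism inducing the identity. Hence the link cobordism map reduces to the gluing map $\Phi_{-\xi} \colon \SFH(M, \gamma_0) \to \SFH(N, \gamma_1)$, where $N = M \cup (-Z)$. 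The inequality is now reversed: $|\gamma_0| > |\gamma_1|$, since two sutures are destroyed at the critical point of $h|_\sigma$, which is a local maximum.

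To construct $I_{\mc D}$, I pick a properly embedded arc $a \subset F \setminus \sigma$ parallel to the critical component $\sigma_0$, disjoint from the bigon component of $F \setminus \sigma$, such that the rectangular strip between $a$ and $\sigma_0$ contains the two points of $P_0$ destroyed by the destabilization. The annulus $A = a \times S^1$ is a product annulus in $(M, \gamma_0)$, and a local analysis of the sutures on the torus neighbourhood of the arc of $L$ between these two points shows that decomposing $(M, \gamma_0)$ along~$A$ yields $(M, \gamma_1) \sqcup (D^2 \times S^1, \gamma)$, where $\gamma$ consists of four longitudinal sutures. Combined with the canonical identification $\SFH(D^2 \times S^1, \gamma) \cong W$ from~\cite[Proposition~9.1]{polytope}, pinned down by the orientation of~$F$, this yields the canonical isomorphism
\[
I_{\mc D} \colon \hat\HFL(L, P_0) \stackrel{\sim}{\longrightarrow} \hat\HFL(L, P_1) \otimes W.
\]

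I then factor the gluing map through $I_{\mc D}$ using~\cite[Proposition~6.2]{gluingmap}: after applying $I_{\mc D}$, the remaining map $d_{\mc D} = \Phi_{-\xi} \circ I_{\mc D}^{-1}$ is itself a gluing map, specifically the one that caps off the $(D^2 \times S^1, \gamma)$ summand of the source using the restriction of $-\xi$ to a standard neighbourhood of the critical point of $h|_\sigma$. By~\cite[Proposition~6.4]{gluingmap}, this map sends $x \otimes c$ to $x$ paired with the Eliashberg-Honda contact element $\EH(-\xi|_{D^2 \times S^1})$, so it is the projection onto the one-dimensional summand of $W$ generated by this element. A direct reading of the dividing set on $D^2 \times \{\mathrm{pt}\}$ from the analogue of~\cite[Figure~14]{gluingmap}, taking into account that the critical point is now a maximum rather than a minimum, identifies $\EH(-\xi|_{D^2 \times S^1})$ as $b$ in the positive case and as $t$ in the negative case, which yields the stated formulas for $d_{\mc D}$.

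The main obstacle is verifying the two geometric steps: that $A$ really decomposes $(M, \gamma_0)$ as $(M, \gamma_1) \sqcup (D^2 \times S^1, \gamma)$, and that $\Phi_{-\xi}$ factors as $d_{\mc D} \circ I_{\mc D}$ in the manner described. Both statements are dual to the corresponding steps in the proof of Proposition~\ref{prop:stab}, and the $S^1$-invariant contact structure computation for the contact element is an orientation-reversed analogue of the one at the end of that proof.
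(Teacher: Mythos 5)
Your construction of the isomorphism $I_{\mc D}$ is essentially the paper's: the paper also uses the product annulus $A = a \times S^1$ in $\W(L,P_0)$ and takes $I_{\mc D} = I_{\mc S_+}^{-1}$, the inverse of the gluing isomorphism of Proposition~\ref{prop:stab}. The gap is in your computation of $d_{\mc D}$. You assert that $d_{\mc D} = \Phi_{-\xi}\circ I_{\mc D}^{-1}$ is a gluing map that ``caps off'' the $(D^2\times S^1,\gamma)$ summand and, by \cite[Proposition~6.4]{gluingmap}, sends $x\otimes c$ to $x$ paired against $\EH(-\xi|_{D^2\times S^1})$. No such pairing formula exists in the cited results: Proposition~6.4 of \cite{gluingmap} only says that the gluing map associated to a product annulus decomposition is an isomorphism, and the tensor-with-$\EH$ formula used in Proposition~\ref{prop:stab} applies only when the contact piece is \emph{disjoint} from the sutured submanifold being included. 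In the destabilization the roles are reversed: the solid torus carrying the factor $W$ is part of the \emph{source} manifold, while the contact structure $-\xi$ lives on the collar $-Z$, so the expression $\EH(-\xi|_{D^2\times S^1})$ does not even make sense, and the composite gluing map does not a priori factor as $\id\otimes(\text{a map }W\to\F_2)$, since the contact region meets the boundary of both pieces. In particular, nothing in your argument determines that $\ker d_{\mc D}$ is the line $\langle x\otimes t\rangle$ (positive case) rather than, say, $\langle x\otimes(t+b)\rangle$; identifying this kernel is exactly the nontrivial content of the proposition. Your final appeal to ``duality with the corresponding steps of Proposition~\ref{prop:stab}'' assumes precisely the duality statement for boundary/gluing cobordisms that the paper flags as open (Section~\ref{sec:duality}, Question~\ref{qn:duality}), so it cannot be used here.

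The paper instead pins down $d_{\mc D}$ by functoriality, composing $\mc D$ with the two stabilizations of Figure~\ref{fig:merge} that share the same annulus $A$: the composition $\mc D\circ\mc S_-$ is equivalent to the identity decorated cobordism, forcing $d_{\mc D}(x\otimes b)=x$, while the dividing set of $\mc D\circ\mc S_+$ contains a closed curve bounding a disk, so the corresponding $S^1$-invariant contact structure is overtwisted and its gluing map vanishes by \cite{gluingmap}, forcing $d_{\mc D}(x\otimes t)=0$. You would need to either supply an argument of this kind or prove a genuine computation of the gluing map for the bypass-type contact collar; as written, the step computing $d_{\mc D}$ does not go through.
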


\begin{proof}
Given a positive destabilisation $\mc D = (Y \times I, F,\sigma)$,
let $\mc S_- = (Y \times I, F,\sigma_-)$ be a negative stabilisation from $(Y,L,P_1)$ to $(Y,L,P_0)$
such that $\mc D \circ \mc S_-$ exists and is as on the left-hand side of Figure~\ref{fig:merge}.
Then $\mc D \circ \mc S_-$ is the identity cobordism from $(Y,L,P_1)$ to itself,
and hence it induces the identity of $\HFLh(Y,L,P_1)$.

Now let $\mc S_+ = (Y \times I, F, \sigma_+)$ be the positive stabilisation shown on the right-hand side of Figure~\ref{fig:merge}.
Then $F_{\mc D} \circ F_{\mc S_+} = 0$, since the dividing set $\sigma \cup \sigma_+$ contains a closed component that bounds a disc, so the corresponding $S^1$-invariant contact structure is overtwisted, and hence the gluing map vanishes by Honda, Kazez, and Mati\'c~\cite{gluingmap}.

Let $a$ be the properly embedded dashed arc in $F$ shown in Figure~\ref{fig:merge}.
The bigon component~$D$ of $F \setminus a$ contains the bigon components of $F \setminus \sigma_+$ and $F \setminus \sigma_-$ and the three points of~$P_0$ involved in the destabilisation, and intersects both $\sigma_+$ and $\sigma_-$ in two arcs.
Then $A = a \times S^1$ is a product annulus in $(M,\gamma_0) = \W(Y,L,P_0)$.

By Proposition~\ref{prop:stab},
\[
F_{\mc S_+} = I_{\mc S_+} \circ s_{\mc S_+},
\]
where we use the above product annulus~$A$ to define the gluing map
\[
I_{\mc S_+} \colon \HFLh(Y,L,P_1) \otimes V' \to \HFLh(Y,L,P_0).
\]
Let
\[
d_{\mc D} = F_{\mc D} \circ I_{\mc S_+}.
\]
If we set $I_{\mc D} := I_{\mc S_+}^{-1}$, then
\[
F_{\mc D} = d_{\mc D} \circ I_{\mc D}.
\]
Given $x \in \HFLh(L,P_1)$, we obtain that
\[
0 = F_{\mc D} \circ F_{\mc S_+}(x) = d_{\mc D} \circ s_{\mc S_+}(x) = d_{\mc D}(x \otimes t),
\]
as claimed.

In case of $\mc S_-$, we repeat the proof of Proposition~\ref{prop:stab} with the same product annulus~$A$,
as explained in Remark~\ref{rem:otherproductannulus}, to obtain that
\[
F_{\mc S_-} = I_{\mc S_+} \circ s_{\mc S_-}.
\]
Indeed, the dividing set of the $S^1$-invariant contact structure corresponding to~$\sigma_-$ on the bigon~$D$ consists of two arcs such that $\chi(R_+) - \chi(R_-) = -1$. Given $x \in \HFLh(Y,L,P_1)$, we obtain that
\[
x = F_{\mc D} \circ F_{\mc S_-}(x) = d_{\mc D} \circ s_{\mc S_-}(x) = d_{\mc D}(x \otimes b),
\]
which completes the proof when $\mc D$ is a positive destabilisation.
The case when $\mc D$ is a negative destabilisation is analogous.
\end{proof}

\subsection{The maps induced by saddles}
\label{sec:morse-saddles}
\label{sec:mapmergesaddle}

In the following theorem, we give a description of the map
associated to a saddle cobordism~$\mc S = (Y \times I, F, \sigma)$ from $(Y,L_0,P_0)$ to $(Y,L_+,P)$.
We work with \emph{oriented} decorated link cobordisms. Therefore, the two
decorations~$w$ and~$z$ in $P_0 \setminus P$ are a $\bw$-basepoint and a $\bz$-basepoint;
cf.~Definition~\ref{def:mergesaddle} and Section~\ref{sec:origrad}.

\begin{theorem}
\label{thm:exacttriangle}
Let the triple $(Y,L,P)$, $(Y,L_0,P_0)$, $(Y,L_+,P)$ be as in Figure~\ref{fig:surgeryet},
and suppose that they can be oriented coherently. Then the map
\[
f \colon \HFLh(Y,L_0,P_0) \longrightarrow \HFLh(Y,L_+,P)
\]
appearing in the decorated skein exact triangle of Theorem~\ref{thm:skein} coincides with the cobordism map~$F_{\mc S}$
induced by the saddle cobordism~$\mc S$ from~$(Y,L_0,P_0)$ to~$(Y,L_+,P)$.
\end{theorem}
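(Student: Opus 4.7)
The plan is to match the two sides by decomposing each map into geometric pieces and verifying that the pieces agree. The saddle cobordism map is, by Remark~\ref{rem:decomposition}, the composition
\[
F_{\mc X} \;=\; F_{\Ws} \circ \Phi_\xi,
\]
where $\Wb$ is the boundary cobordism analyzed in Section~\ref{sec:boundarymerge} and $\Ws$ is the special cobordism of Section~\ref{sec:specialmerge}. The map from the decorated skein exact triangle is $f = f' \circ \Phi_A$ by Theorem~\ref{thm:skein}. So I plan to prove separately that $\Phi_\xi = \Phi_A$ and $F_{\Ws} = f'$.

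For the first identification, both $\Phi_\xi$ and $\Phi_A$ are Honda--Kazez--Mati\'c gluing maps defined along a product annulus. The annulus $A = b \times S^1$ used in Section~\ref{sec:boundarymerge} is literally the annulus inside $N \cong M_l(K)$ whose decomposition recovers $(M_0,\gamma_0) = Y(L_0,P_0)$. The $S^1$-invariant contact structure $\xi$ restricted to $-Z$ is, away from $A$, the product contact structure, and near $A$ it is determined by the decoration $\sigma$, which on the neighborhood of the saddle is the horizontal arc shown in Figure~\ref{fig:square}; this is precisely the contact structure used by Honda--Kazez--Mati\'c to define $\Phi_A$. Uniqueness of the gluing map for equivalent contact structures gives $\Phi_\xi = \Phi_A$.

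For the second identification, by Lemmas~\ref{lem:2handle} and~\ref{lem:product} the cobordism $\Ws$ is a single 2-handle attachment along $(\mb L,f)$. I would argue that $\mb L \subset N$ is isotopic to the core $K_l$ of the $l$-filled solid torus in $M_l(K) \cong N$: the arc $\mb L \cap M_0$ lies in the crossing disk $D$, while the arc $\mb L \cap (-Z)$ runs across the handle-like region of the saddle, so that the two arcs close up to a curve parallel to $\partial D = K$ pushed into $N$ via the $l$-filling. A direct check of the framing shows $f$ coincides with the $s$-slope, since the resulting filled sutured manifold is $M_1 = Y(L_+,P) = M_s(K)$. Consequently, the 4-dimensional cobordism~$\Ws$ agrees with the cobordism used to define $f'$ in the surgery exact triangle, and by functoriality the induced maps are the same. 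To make this rigorous on the chain level, I would use the Heegaard triple diagram from Section~\ref{sec:hddescription} and exhibit it (after handle slides among the $\ba$ curves and Hamiltonian isotopies of the $\delta$ curves) as a standard triple subordinate to the surgery exact triangle of the triple $(M,M_l(K),M_s(K))$; the surgery map $f'$ is then also given by counting clockwise holomorphic triangles in this same triple.

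The hard part is this last step, namely verifying rigorously that the triple diagram of Figure~\ref{fig:hd2h} is equivalent to a triple diagram subordinate to the surgery exact triangle, and that this equivalence carries the triangle-counting map $F_{\Ws}$ to $f'$. The challenge has two aspects: tracking how the crossing disk $D$ and the core $K_l$ are encoded on the Heegaard surface constructed from a link projection, and handling the distinction between the unreduced surgery exact triangle on $\SFH$ and the decorated version of Theorem~\ref{thm:skein}, where the annulus decomposition must be accounted for on the diagrammatic level. Once this diagrammatic equivalence is established, combining it with the identification $\Phi_\xi = \Phi_A$ from the first step yields $F_{\mc X} = f$, as required.
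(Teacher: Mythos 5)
Your overall strategy is the same as the paper's: write the saddle map as the annulus gluing map followed by the special-cobordism (2-handle) map, and identify the latter with $f'$ by showing that the triple diagram of Figure~\ref{fig:hd2h} computes the surgery exact triangle. The genuine gap is that this decisive identification is exactly the step you defer, and the partial argument you give for it does not suffice. Knowing that surgery on $(N,\gamma_1)$ along $(\mb L,f)$ produces $M_1 = Y(L_+,P) = M_s(K)$ does not determine the framed isotopy class of $(\mb L,f)$, nor does it identify the 2-handle cobordism $\Ws$ with the cobordism inducing $f'$: different framed knots in $M_l(K)$ can yield $M_s(K)$ by surgery with different induced maps, so ``by functoriality the induced maps are the same'' presupposes precisely what has to be proved. (Also, $\mb L$ is not naturally ``parallel to $\partial D = K$''; the relevant identification is with the image $\mu_*$ of a \emph{meridian} of $K$, which after the $l$-filling becomes the core of the surgery solid torus, and pinning down its framing is the whole issue.) Finally, $f'$ is defined by a triangle count in a diagram adapted to the surgery triangle for $K$, so one must show that such a diagram can be taken to be the very diagram subordinate to the 2-handle attachment along $(\mb L,f)$; your plan to match against a ``standard triple'' by handle slides and Hamiltonian isotopies is the hard content, and it is not carried out.

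The paper closes this gap differently and more directly. First, the identification of $(\mb L,f)$ is made explicit: $\mu$ and $\mu'$ are concentric meridians of $K$, and after $0$-surgery on $K$ the twisting diffeomorphism of $S^1\times S^2$ in Figure~\ref{fig:L} removes the two crossings of $L$ while inserting a full right-handed twist in $(\mu_*,\mu_*')$; this exhibits the complement as the manifold of the diagram on the right of Figure~\ref{fig:hdannulus} (which also identifies $N$ with $M_0(K)$ and hence the gluing map of Section~\ref{sec:boundarymerge} with $\Phi_A$), and shows that $\mb L$ corresponds to $\mu_*$ with $f$ a parallel vertical copy, so that filling along the meridian $\beta_1$, along $f$, and along $\mu_*'$ gives $M_0(K)$, $M_1(K)$, and $M_\infty(K)$, respectively. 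Second, instead of proving an equivalence with a separate standard triple, the paper augments the subordinate triple $(\Sigma,\ba,\bb,\bd)$ by a fourth family $\boldsymbol{\eps}$ (Figure~\ref{fig:twist2}) with $\eps_1$ representing $\mu_*'$, and checks the hypotheses of \cite[Lemma~9.2]{propertiesandapplications}: $[\beta_1]+[\delta_1]=[\eps_1]$ on the boundary and pairwise algebraic intersection numbers $-1$. That criterion produces the exact triangle of Theorem~\ref{thm:skein} from holomorphic triangle counts in this one diagram, so $f'$ and $F_{\Ws}$ are literally the same count and no further diagrammatic matching is needed. Supplying these two ingredients (the Figure~\ref{fig:L} identification of the framed knot and the quadruple-diagram criterion) is what your proposal is missing.
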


We will prove Theorem~\ref{thm:exacttriangle} in two steps.
We decompose the sutured cobordism~$\W = (W,Z,[\xi])$ from $(M_0,\g_0) = \W(Y,L_0,P_0)$ to
$(M_+,\g) = \W(Y,L_+,P)$ complementary to the saddle cobordism~$\mc S$ into
a \emph{boundary cobordism} $\Wb$ from $(M_0,\g_0)$ to $(N,\g)$
and a \emph{special cobordism} $\Ws$ from $(N,\g)$ to $(M_+,\g)$.
In Proposition~\ref{prop:saddles-b}, we will show that $F_{\Wb} = \Phi_A$,
where $A$ is the product annulus in $(N,\g)$ obtained by gluing the sutures
$\g_0(z)$ and $\g_0(w)$ of $(M_0,\g_0)$ corresponding to the decorations~$w$ and~$z$, respectively.
In Proposition~\ref{prop:saddles-s}, we will prove that $\Ws$ consists of a single
2-handle, and identify the attaching circle. Finally, we prove
Theorem~\ref{thm:exacttriangle} by comparing the 2-handle of $\W^s$ with the surgery involved in
the definition of the map $f$ of the skein exact triangle.

We first consider the boundary cobordism~$\Wb$ associated to a saddle.
Since the surface $F$ is oriented,
\[
N \approx M_0 \cup_{(L_0 \cap S) \times S^1} (S \times S^1),
\]
where $S$ is the square from Definition~\ref{def:mergesaddle}; see Figure~\ref{fig:square}.

\begin{proposition}
\label{prop:saddles-b}
Let $\mc S = (Y \times I, F,\sigma)$ be an oriented saddle cobordism from $(Y, L_0, P_0)$ to $(Y, L_+, P)$.
Let $P_0 \setminus P = \{w,z\}$, and let $\gamma_0(w)$ and $\gamma_0(z)$
denote the corresponding components of $\gamma_0$. If $A$ is the product annulus in $(N,\g)$
obtained by gluing $\gamma_0(w)$ and $\gamma_0(z)$, then
$F_{\Wb} = \Phi_A$ is an isomorphism that preserves the Alexander and the Maslov gradings
when defined.
\end{proposition}

\begin{proof}
Consider the annulus $A = b \times S^1 \subseteq N$,
where $b = I \times \{1/2\}$ is the arc contained in the square $S$ drawn in blue on the right-hand side
of Figure~\ref{fig:square}.
Then $A$ is a \emph{product annulus} in $(N,\g)$, and gives a sutured manifold decomposition
\[
(N, \gamma) \overset{A}{\rightsquigarrow} (M_0, \gamma_0).
\]
By Lemma~\ref{lem:PhiAiso}, the annulus $A$ induces an isomorphism
\[
\Phi_{A} \colon \SFH(M_0, \gamma_0) \to \SFH(N, \gamma).
\]

We claim that $\Phi_A$ coincides with the gluing map $\Phi_{-\xi}$.
Indeed, let $a$ be a pair of arcs adapted to $A$, as in Definition~\ref{def:arcs}.
Then $\Phi_A = \Phi_{A_a} \circ (d_a)_*$. Here $(M_a,\g_a)$ is the result of
decomposing $(N,\g)$ along $A_a$, and $d_a \colon (M_0,\g_0) \to (M_a,\g_a)$ is a diffeomorphism.
Furthermore, $\Phi_{A_a} = \Phi_{\xi_a}$
for a positive contact structure $\xi_a$ on $-(N \setminus M_a)$ with dividing set $-(\g \cup \g_a)$.
Then there is a diffeomorphism $D \in \text{Diff}_0(N, \g)$ such that $D|_{M_0} = d_a$
and $(D|_{N \setminus M_0})_*(-\xi) = \xi_a$.
By the naturality of the gluing maps~\cite[Section~5]{gluingmap}, we have
\[
\Phi_{-\xi} = D_* \circ \Phi_{-\xi} = \Phi_{\xi_a} \circ (d_a)_* = \Phi_A.
\]
By Proposition~\ref{prop:pa}, the map $\Phi_A$ preserves the Alexander and Maslov gradings
when they are defined.
\end{proof}

We now turn our attention to the special cobordism $\Ws$ from $(N,\g)$
to $(M_+,\g)$ associated to a saddle.
Recall that we obtain $(N, \gamma)$ from $(M_0, \gamma_0)$ by gluing
the annuli $\gamma_0(w)$ and $\gamma_0(z)$ with a tube, which we denote by $T$,
whose core is the product annulus $A$.

As we explained in Definition~\ref{def:mergesaddle},
the saddle critical point of $F$ lies inside $S \times I \subset Y \times I$,
where $S \subset Y$ is a square; see Figure~\ref{fig:square}.
We let $B =  F \cap (S \times I)$. Then $d = p_Y|_B \colon B \to S$ is a diffeomorphism.
Note that the result of surgery on $L_0$ along $S$ is $L_+$.
The tube $T$ is the unit normal bundle $UNB \cong B \times S^1$ of $B$ in $\Bl_F(Y \times I)$.
Let $c$ be the core of the square $S$, which joins the basepoints $w$ and $z$ of $L_0$,
and let $c^*$ be its co-core. Furthermore, consider $\td c = d^{-1}(c)$ and $\td c^* = d^{-1}(c^*)$ in $B$.
Then $A = UNB|_{\td c^*} \cong \td c^* \times S^1$; see Figure~\ref{fig:square2}.

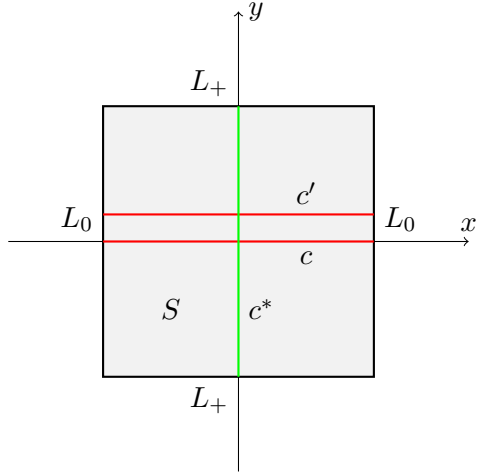
\begin{figure}
\begin{center}
\begin{tikzpicture}

\def\u{1.8cm};
\def\p{\u/5};
\definecolor{light-gray}{gray}{0.95}
\usetikzlibrary{positioning}

\draw [fill=light-gray, thick] (-\u,-\u) -- (-\u, \u) -- (\u, \u) -- (\u, -\u) -- cycle;
\draw [->] (0,-1.7*\u) -- (0,1.7*\u) node[right] {$y$};
\draw [->] (-1.7*\u,0) -- (1.7*\u,0) node[above] {$x$};
\draw[color=red, thick] (-\u,0) -- (\u,0);
\draw[color=red, thick] (-\u,\p) -- (\u,\p);
\draw[color=green, thick] (0,-\u) -- (0, \u);
\draw (\u, 0) node [above right]{$L_0$};
\draw (-\u, 0) node [above left]{$L_0$};
\draw (0,-\u) node [below left]{$L_+$};
\draw (0, \u) node [above left]{$L_+$};
\draw (\u/2,0) node [below]{$c$};
\draw (\u/2, \p) node [above]{$c'$};
\draw (0, -\u/2) node[right]{$c^*$};
\draw (-\u/2, -\u/2) node{$S$};

\end{tikzpicture}
\caption[{Figure \ref{fig:square2}}]{The figure shows the square $S \subset S^3$ associated to the saddle cobordism,
together with the core $c$ and the co-core $c^*$.}
\label{fig:square2}
\end{center}
\end{figure}

\begin{proposition}
\label{prop:saddles-s}
The special cobordism $\Ws$ corresponding to a saddle cobordism consists of a single 2-handle attached
to $N \times I$ along $N \times \{1\}$.
With the above notation, let $b \subset UNB|_{\td c}$ be a section joining $UNB_w \cap S$ to $UNz \cap S$.
Furthermore, let $c'$ be a parallel translate of $c$ in $S$,
and let $b'$ be a parallel translate of $b$ in $UNB$
that is contained in $UNB|_{d^{-1}(c')}$.
Then the attaching sphere of the 2-handle is the curve $b \cup c$ with framing $b'\cup c'$ in $N$.
\end{proposition}

\begin{proof}
We can suppose that, in a local coordinate system $\R^3 \subset Y$, the square
\[
S = [-1,1] \times [-1,1] \times \{0\},
\]
and that $L_0 \cap (S \times \R) = \{-1,1\} \times [-1,1] \times \{0\}$.
We now introduce a cancelling pair of 4-dimensional 1- and 2-handles
that define the product cobordism $Y \times I$.
The 1-handle $\mf h^1$ is an $\eps$-neighbourhood of the band $B$ for $\eps > 0$  sufficiently small,
and the 2-handle is
\[
\mf h^2 = \left\{\, (x,y,z,t) \in \R^3 \times I \,\middle|\, \exists\, \bar t \geq t \,\colon\, (x,y,z,\bar t) \in B \,\right\}.
\]
If we remove an $\eps$-neighbourhood of $F$ from $Y \times I$, then we obtain a cobordism diffeomorphic to the special cobordism~$\Ws$.
The 1-handle $\mf h^1$ is contained in $\nbd_\eps(F)$, and therefore disappears when we consider $\Ws$. On the other hand, $\mf h^2$ is contained in $\Ws$, except a small portion near the boundary, which is contained in $\nbd_\eps(F)$. An illustration of $\mf h^2$ is available in Figure \ref{fig:handles}.

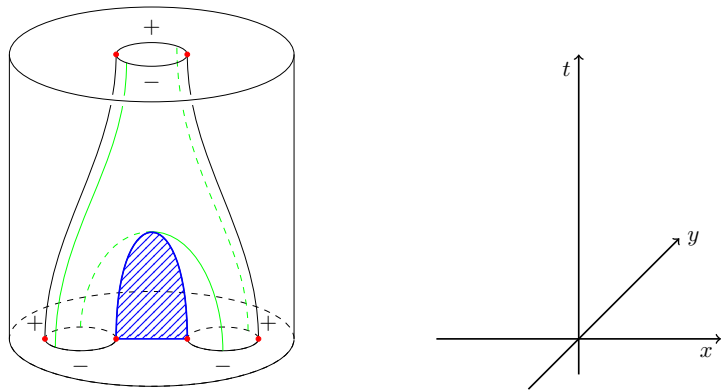
\begin{figure}
\centering
\resizebox{0.7\hsize}{!}{\begin{tikzpicture}[crossline/.style={preaction={draw=white, -, line width=5pt}}]
\usetikzlibrary{patterns}

\def\u{0.6cm};
\def\h{4*\u};
\def\v{4*\u};
\def\b{3*\u};
\def\c{4*\u};
\def\hshift{12*\u};
\def\d{0.23570226039*\u};
\def\smallradius{\u/15};

\begin{scope}[shift={(0,-\v)}]
\draw[thick, ->] (0,-\u) -- (0,2*\v);
\draw[thick, ->] (180:\h) -- (0:\h);
\draw[thick, ->] (225:\h/2) -- (45:\h);

\draw (0, 2*\v) node[anchor=north east]{$t$};
\draw (0:\h) node[anchor=north east]{$x$};
\draw (45:\h) node[anchor=west]{$y$};
\end{scope}

\begin{scope}[shift={(-\hshift,0)}]

\begin{scope}[shift={(0,0-\v)}]
\def\halfradius{\h};
\draw (0,0) ellipse (\halfradius*1 and \halfradius/3);
\draw[white, fill=white] (0-1.2*\halfradius,0) rectangle (1.2*\halfradius,\halfradius/2);
\draw[dashed] (0,0) ellipse (\halfradius*1 and \halfradius/3);
\end{scope}

\begin{scope}[shift={(\h/2,0-\v)}]
\def\halfradius{\u};
\draw (0,0) ellipse (\halfradius*1 and \halfradius/3);
\draw[white, fill=white] (0-1.2*\halfradius,0) rectangle (1.2*\halfradius,\halfradius/2);
\draw[dashed] (0,0) ellipse (\halfradius*1 and \halfradius/3);
\draw (0,-\halfradius/3) node[anchor=north] {$-$};
\draw (0.8*\halfradius,0) node[anchor=south west] {$+$};
\end{scope}
\begin{scope}[shift={(-\h/2,0-\v)}]
\def\halfradius{\u};
\draw (0,0) ellipse (\halfradius*1 and \halfradius/3);
\draw[white, fill=white] (0-1.2*\halfradius,0) rectangle (1.2*\halfradius,\halfradius/2);
\draw[dashed] (0,0) ellipse (\halfradius*1 and \halfradius/3);
\draw (0,-\halfradius/3) node[anchor=north] {$-$};
\draw (-0.8*\halfradius,0) node[anchor=south east] {$+$};
\end{scope}

\begin{scope}[yscale=-1]
\draw (-\h/2-\u,\v) .. controls (-\h/2-\u,\v-\b) and (-\u,-\v+\b) .. (-\u,-\v);
\draw (\h/2+\u,\v) .. controls (\h/2+\u,\v-\b) and (\u,-\v+\b) .. (\u,-\v);
\draw (-\h/2+\u,\v) .. controls (-\h/2+\u,\v-\c) and (\h/2-\u,\v-\c) .. (\h/2-\u,\v);

\draw[dashed,color=green] (\h/2+3*\d,\v-\d) .. controls (\h/2+3*\d,\v-\d-0.9*\b) and (3*\d,-\v-\d+1.2*\b) .. (3*\d,-\v-\d);
\begin{scope}[yshift=2*\d]
\draw[color=green] (-\h/2-3*\d,\v-\d) .. controls (-\h/2-3*\d,\v-\d-1.1*\b) and (-3*\d,-\v-\d+1*\b) .. (-3*\d,-\v-\d);
\end{scope}
\draw[dashed, color=green] (-\h/2,\v-\u/3) .. controls  (-\h/2,\v-3*\u/3) and (-1.5*\u,\v-0.753*\c) .. (0,\v-0.753*\c);
\draw[color=green] (\h/2,\v+\u/3) .. controls  (\h/2,\v-4*\u/3) and (1.2*\u,\v-0.753*\c) .. (0,\v-0.753*\c);
\end{scope}

\begin{scope}[shift={(0,\v)}]
\def\halfradius{\h};
\draw[crossline] (0,0) ellipse (\halfradius*1 and \halfradius/3);
\end{scope}

\begin{scope}[shift={(0,\v)}]
\def\halfradius{\u};
\draw (0,0) ellipse (\halfradius*1 and \halfradius/3);
\draw (0,-\halfradius/3) node[anchor=north] {$-$};
\draw (0,\halfradius/3) node[anchor=south] {$+$};
\end{scope}

\draw (-\h, \v) -- (-\h, -\v);
\draw (\h, \v) -- (\h, -\v);

\begin{scope}[]
\draw[thick,color=blue] (-\h/2+\u,-\v) .. controls (-\h/2+\u,-\v+\c) and (\h/2-\u,-\v+\c) .. (\h/2-\u,-\v);
\draw[thick,color=blue] (-\h/2+\u,-\v) --(\h/2-\u,-\v);
\clip  (-\h/2+\u,-\v) .. controls (-\h/2+\u,-\v+\c) and (\h/2-\u,-\v+\c) .. (\h/2-\u,-\v) -- cycle;
\draw[pattern color=blue, pattern = north east lines] (-\h/2-2*\u, -2*\v) rectangle (\h/2+2*\u, -\v+2*\c);
\end{scope}

\begin{scope}[shift={(0,-\v)}]
\draw[color=red,fill=red] (-\h/2-\u, 0) circle (\smallradius);
\draw[color=red,fill=red] (-\h/2 + \u,0) circle (\smallradius);
\draw[color=red,fill=red] (\h/2 + \u,0) circle (\smallradius);
\draw[color=red,fill=red] (\h/2 -\u,0) circle (\smallradius);
\end{scope}
\begin{scope}[shift={(0,\v)}]
\draw[color=red,fill=red] (-\u,0) circle (\smallradius);
\draw[color=red,fill=red] (\u,0) circle (\smallradius);
\end{scope}

\end{scope}

\end{tikzpicture} }
\caption{The figure shows a merge saddle cobordism from $(U_2, P_{U_2})$
-- the two-component unlink with standard decorations -- to $(U_1, P_{U_1})$
-- the unknot with standard decorations.
The core of the 2-handle described in Proposition~\ref{prop:saddles-s} is shown in blue.
The decorated links are embedded in $S^3$, and the cobordism is embedded in $S^3 \times I$.
The figure represents the $\{z=0\}$ section $\R^2 \times \{0\} \times I$
of $\R^3 \times I \subseteq S^3 \times I$, with coordinates as shown above.
Here, the coordinates on $\R^3$ are $x$, $y$, and $z$, and the coordinate on~$I$ is~$t$.
We use this convention every time a (decorated) cobordism
in $S^3 \times I$ is represented enclosed in a cylinder, as above.}
\label{fig:handles}
\end{figure}

The core of the 2-handle $\mf h^2$ is
\[
C = \mf h^2 \cap \{y=z=0\},
\]
whose boundary is $b \cup c$, which is therefore the attaching circle.
To obtain the framing of the attaching circle of $\mf h^2$, consider the disc
\[
C' = \mf h^2 \cap \{z=0, y=\eps\}
\]
in $\mf h^2$ parallel to $C$.
Its boundary, which gives the framing of $\mf h^2$, is $b' \cup c'$.

The height function on $\Ws \setminus \mf h^2$ does not have any critical points, and can be rescaled to be a Morse function. It follows that $\Ws$ consists of the single handle~$\mf h^2$ attached to $N \times I$ along $N \times \{1\}$.
\end{proof}

\begin{proof}[{Proof of Theorem~\ref{thm:exacttriangle}}]
Consider the saddle cobordism $\mc S$ from $(L_0, P_0)$ to $(L_+,P)$.
Then the cobordism map
\[
F_{\mc S} \colon \HFLh(L_0,P_0) \to \HFLh(L_+,P)
\]
is given by the isomorphism $\Phi_A \colon \SFH(M_0,\g_0) \to \SFH(N,\g)$
induced by gluing along the product annulus~$A$ obtained by identifying $\g_0(w)$ and $\g_0(z)$
as in Proposition~\ref{prop:saddles-b}, followed by the 2-handle map $F_{\Ws} \colon \SFH(N,\g) \to \SFH(M_+,\g)$
described in Proposition~\ref{prop:saddles-s}.

Consider the crossing disc~$D$ that intersects~$L$ in two points, and let $K = \partial D$.
As in equation~\eqref{eqn:surgery} in Section~\ref{sec:skein},
the framings $m$, $l$, and~$s$ of $K$ give rise to the surgery exact triangle
\begin{equation}
\dots \longrightarrow \SFH(M_m(K),\gamma) \stackrel{e'}{\longrightarrow} \SFH(M_l(K),\gamma) \stackrel{f'}{\longrightarrow} \SFH(M_s(K),\gamma) \longrightarrow \dots
\end{equation}
Let $A'$ be the product annulus in $(M_l(K),\g)$ obtained by capping off $D \setminus \nbd(K)$.
If we decompose $(M_l(K),\g)$ along $A'$, then we obtain a sutured manifold $(M_*,\g_*)$
complementary to the decorated link $(L_*,P_*)$ shown in the middle of Figure~\ref{fig:L}
(if we do not identify the two spheres).
We obtain $(L_0,P_0)$ from $(L_*,P_*)$ by performing two Reidemeister~I moves. Let
$d_0 \colon (M_*,\g_*) \to (M_0,\g_0)$ be the diffeomorphism induced between the
complementary sutured manifolds.
A similar twist gives a diffeomorphism $d_1 \colon (M_s(K),\g) \to (M_+,\g)$.
If
\[
\Phi_{A'} \colon \SFH(M_*,\g_*) \to \SFH(M_l(K),\gamma)
\]
is the gluing map induced by $A'$, then we obtain the exact triangle
\[
\dots \longrightarrow \HFLh(L,P) \stackrel{e}{\longrightarrow} \HFLh(L_0,P_0) \stackrel{f}{\longrightarrow} \HFLh(L_+,P) \longrightarrow \dots
\]
by setting $e = (d_0)_* \circ \Phi_{A'}^{-1} \circ e'$ and $f = (d_1)_* \circ f' \circ \Phi_{A'} \circ (d_0)_*^{-1}$.
Note that the identifications $d_0$ and $d_1$ were implicit in Theorem~\ref{thm:skein}.
Here we make them explicit to be able to compare different gluing and 2-handle maps.

Let $d \colon (M_l(K),\g) \to (N,\g)$ be the diffeomorphism obtained by
performing a left-handed Dehn twist on $M_l(K)$ along $A'$, as in Figure~\ref{fig:L}.
Then~$d(A') = A$, and the diffeomorphism induced between the decomposed manifolds is~$d_0$,
hence the diagram
\[
  \xymatrix{\SFH(M_*,\g_*) \ar[r]^{(d_0)_*} \ar[d]^{\Phi_{A'}} & \SFH(M_0,\g_0) \ar[d]^{\Phi_A} \\
    \SFH(M_l(K),\g) \ar[r]^{d_*} & \SFH(N,\g).}
\]
is commutative by the naturality of the gluing maps.

The result follows once we show that the diagram
\[
  \xymatrix{\SFH(M_l(K),\g) \ar[r]^{d_*} \ar[d]^{f'} & \SFH(N,\g) \ar[d]^{F_{\Ws}} \\
    \SFH(M_s(K),\g) \ar[r]^{(d_1)_*} & \SFH(M_+,\g)}
\]
is also commutative, since it implies that
\[
f = (d_1)_* \circ f' \circ \Phi_{A'} \circ (d_0)_*^{-1} = F_{\Ws} \circ \Phi_A = F_{\mc S}.
\]
As both $f'$ and $F_{\Ws}$ are 2-handle maps, and $d_1$ is the map induced by~$d$ between the results
of the handle attachments,
it suffices to prove that $d$ maps the framed circle $(m,s)$ for $f'$ to the framed circle $(b \cup c, b' \cup c')$ for $\Ws$
by the naturality of the 2-handle maps~\cite[Theorem~8.2]{cobordisms}.

\begin{figure}
\begin{center}
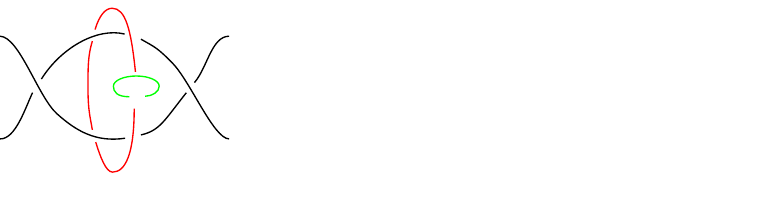
\caption[{Figure \ref{fig:L}}]{The left-hand side of the figure shows the link~$(Y,L,P)$
with the small unknot $K = \partial D$ in red,
the meridian~$m$ in green, and the framing $s$ in dashed green.
In the middle, we see the result of $l$-surgery along~$K$.
The manifold $Y_l(K)$ is represented here as the complement of two balls,
with their boundaries identified via a reflection.
After untwisting~$L_*$ via the diffeomorphism~$d$,
we get the figure on the right, taking the framed knot $(m,s)$ to $(b \cup c, b' \cup c')$.}
\label{fig:L}
\end{center}
\end{figure}

Consider a local projection of $L$ as on the left of Figure~\ref{fig:L}, and let $K$ be the boundary of the crossing disc~$D$.
The disc $D$ can be capped off in $Y_l(K)$ to obtain a sphere $S^2 \subset Y_l(K)$.
If we cut $Y_l(K)$ along this sphere, then the manifold that we obtain is the complement
of the two balls in the middle of Figure~\ref{fig:L}.
To recover $Y_l(K)$, glue the boundaries of the balls via an identification given by a reflection.

Recall that $m$ is a meridian of $K$,
and $s$ is a framing of $K$ such that $m \cdot l = l \cdot s = s \cdot m = 1$.
After performing $l$-surgery along $K$,
the meridian~$m$ becomes isotopic to the core of the solid torus that we glued in during the surgery.
Furthermore, in $M_l(K)$, we can slide $s$ over $K$ to obtain
the green dashed arc in the middle of Figure~\ref{fig:L}.
Performing surgery along the framed knot $(m,s)$ in $(M_l(K),\gamma)$,
we obtain $(M_s(K),\gamma)$.

The diffeomorphism~$d$ twists the left-hand sphere by $\pi$ to the left
and the right-hand sphere by $\pi$ to the right along the horizontal axis
(this extends to an automorphism of $Y_l(K)$),
removing the two crossings of~$L_*$, and untwisting the green pair of arcs~$m$ and~$s$;
see the right-hand side of Figure~\ref{fig:L}.
The twist sends $L_*$ to the link~$L_0$ on the right-hand side of Figure~\ref{fig:L}.

By Proposition~\ref{prop:saddles-s}, the map $F_{\Ws}$ is given by performing surgery on $(N,\gamma)$
along the framed knot $(b \cup c, b' \cup c')$.
On the right of Figure~\ref{fig:L}, the framed knot $(b \cup c, b' \cup c')$ corresponds
to the parallel pair of arcs.
Hence, the diffeomorphism $d$ maps the framed knot $(m,s)$ to $(b \cup c, b' \cup c')$, as required.
\end{proof}

\subsection{A Heegaard diagram arising from a link projection and
a triple diagram for the saddle map}
\label{sec:OSzconstruction}

We will usually only write $(L,P)$ instead of $(S^3,L,P)$.
Analogously -- see Definition \ref{def:dlc} -- we will denote any decorated link cobordism
$(S^3 \times I, F, \sigma)$ from $(L_0, P_0)$ to $(L_1, P_1)$ by $(F, \sigma)$.

Ozsv\'ath and Sza\-b\'o~\cite{HFK1} described how to construct a Heegaard diagram from a knot projection.
We first extend their construction to decorated links in~$S^3$; see Figure~\ref{fig:hopf} for an illustration.
We then give a triple diagram for the 2-handle map~$F_{\Ws}$ corresponding to a saddle cobordism.

Let $(L,P)$ be an oriented decorated $n$-component link in~$S^3$ with $|P|=2\ell$, and
let $\mc L$ be a connected projection of $L$ onto $\R^2 \subseteq \R^3$.
Let $\Sigma$ be the boundary of a regular neighbourhood of $\mc L$ in $\R^3$.
Then $\Sigma$ is a surface of some genus~$g$, oriented as the boundary of its exterior.
We place a basepoint on~$\Sigma$ right above each point of the decoration~$P$.
We label the basepoints alternatingly by~$w_i$ and~$z_i$
such that each component of~$R_-(P)$ is oriented from~$w_i$ to~$z_i$.
We let $\boldsymbol{w} = \{\,w_1,\dots,w_\ell\,\}$ and
$\boldsymbol{z} = \{\,z_1,\dots,z_\ell\,\}$.

For each crossing of $\mc D$, we add a $\beta$-curve on $\Sigma$ around
the crossing as in Figure~\ref{fig:hopf}. We call these curves $\beta_1,\ldots,\beta_{g-1}$.
Furthermore, for each component of~$R_-(P)$, we add a meridional $\beta$-curve
that we call $\beta_g, \ldots, \beta_{g+\ell-1}$.
What we finally get is a set
\[
\bb = \{\,\beta_1, \dots, \beta_{g+\ell-1} \,\}
\]
of attaching circles.

As for the $\alpha$-curves, we draw a curve around each bounded region
of $\R^2 \setminus \mc L$, and call these curves $\alpha_1, \ldots, \alpha_g$.
Furthermore, for each component of $R_-(P)$ except one, we add an inessential
$\alpha$-curve that contains $z_i$ and $w_i$ as in Figure~\ref{fig:hopf}.
We call these ``ladybugs,'' and label them $\alpha_{g+1}, \dots, \alpha_{g+\ell-1}$.
We set
\[
\ba = \{\,\alpha_1, \dots, \alpha_{g+\ell-1} \,\}.
\]

\begin{figure}
\resizebox{0.95\textwidth}{!}{
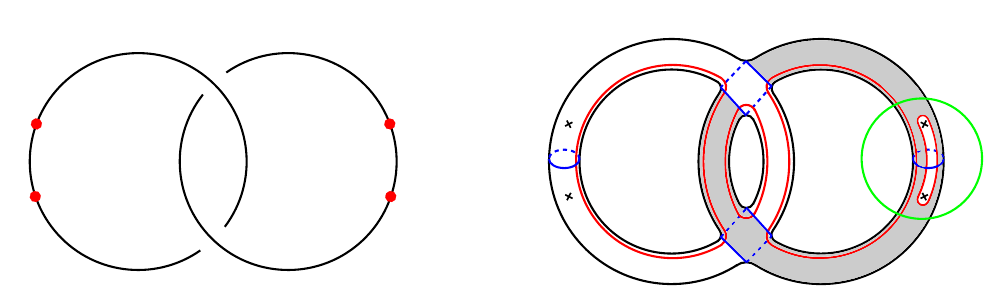
}
\caption{The picture on the left shows a projection of the
Hopf link with two decorations on each component. From this
projection, one can construct the Heegaard diagram on the
right-hand side. The grey region on the right-hand side is
the periodic domain corresponding to the second link component.}
\label{fig:hopf}
\end{figure}

The diagram $(\Sigma, \ba, \bb, \boldsymbol{w}, \boldsymbol{z})$ is
a $2\ell$-pointed Heegaard diagram representing the link~$L$.
A sutured Heegaard diagram for the complement of a link is obtained
from a $2\ell$-pointed Heegaard diagram by removing a small disc
about each of the basepoints~$w_i$ and~$z_i$.
For this reason, we will use $2\ell$-pointed Heegaard diagrams and sutured
Heegaard diagrams for decorated link complements without distinction.

It follows from Proposition~\ref{prop:saddles-s} and from \cite[Section~6]{cobordisms}
that the cobordism map $F_{\Ws}$ can be computed from any triple diagram subordinate to the 2-handle attachment.
We now explain how to construct such a diagram.

\begin{proposition}
\label{prop:saddles-hd}
Let $\Ws$ denote the special cobordism associated to
a saddle cobordism $\mc S = (F, \sigma)$ from $(L_0, P_0)$ to $(L_+, P)$.
Let $\mc H_0$ denote a Heegaard diagram for $(L_0,P_0)$ arising from a connected link projection as above,
which, in a neighbourhood of the square~$S$ corresponding to the saddle,
is shown on the left-hand side of Figure~\ref{fig:hdannulus}.
The right-hand side of Figure~\ref{fig:hdannulus} shows a Heegaard diagram for $(N, \gamma_1)$
obtained by gluing $\g_0(w)$ and $\g_0(z)$. The product annulus~$A$ is spanned by the curve~$a$.

Let $\mc T$ be the triple diagram obtained by locally modifying $\mc H_0$ as in Figure~\ref{fig:HDsaddle},
and by setting all the $\delta$ curves that are not shown to be small Hamiltonian isotopic translates of the respective $\beta$-curves.
Then $\mc T$ is a triple diagram subordinate to the 2-handle described in Proposition~\ref{prop:saddles-s}.
In particular, as in~\cite[Definition~6.8]{cobordisms}, the map~$F_{\Ws}$ is induced by the chain map
\[
f_{\Ws}(\x) = \sum_{\y \in \mb T_{\ba} \cap \mb T_{\bd}} \, \sum_{
					\substack{\psi \in \pi_2(\x, \Theta_{\beta,\delta}, \y)\\
					\mi(\psi)=0}
					} \#\mc M(\psi) \cdot \y.
\]
that counts pseudo-holomorphic triangles in $\mc T$, where $\Theta_{\beta,\delta}$ represents the
top-graded generator of $\SFH(\S,\bb,\bd,\s_0)$.
\end{proposition}

\begin{figure}
\begin{center}
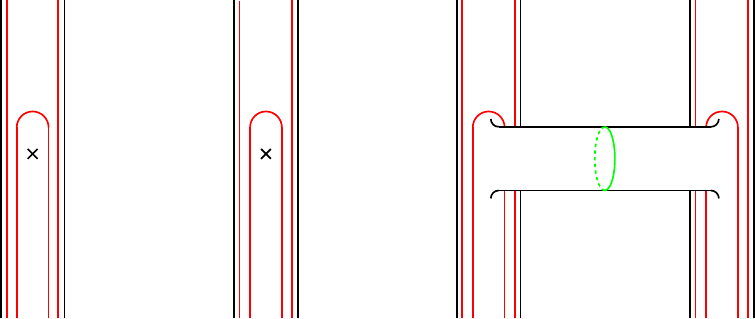
\caption[{Figure \ref{fig:hdannulus}}]{The figure on the left is a local picture of the Heegaard diagram
$\mc H_0$ for $(L_0, P_0)$ arising from a connected link projection.
The figure on the right is a Heegaard diagram for the sutured manifold $(N, \gamma)$ after the boundary cobordism,
obtained by connecting~$w$ and~$z$ by a tube. The core of the tube is the green curve~$a$.}
\label{fig:hdannulus}
\end{center}
\end{figure}

\begin{figure}
\begin{center}
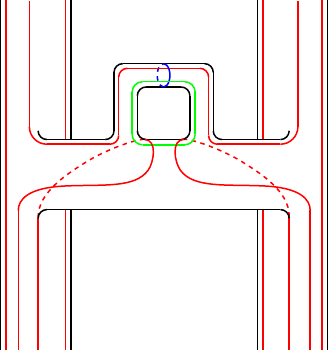
\caption[{Figure \ref{fig:HDsaddle}}]{The triple diagram $\mc T$
subordinate to the 2-handle attachment of the special cobordism associated to a saddle.}
\label{fig:HDsaddle}
\end{center}
\end{figure}

\begin{proof}
Let $\mc H_0 = (\S_0,\ba_0,\bb_0)$ be the diagram of $(L_0,P_0)$ obtained from a connected projection of~$L_0$;
see the left-hand side of Figure~\ref{fig:hdannulus}.
We obtain the diagram $\mc H = (\S,\ba_0,\bb_0)$ of $(N,\g)$ by gluing the components of~$\partial\S_0$
corresponding to the decorations~$w$ and~$z$ with a tube~$\td C$; see the right-hand side of Figure~\ref{fig:hdannulus}.
We suppose that~$\S$ is embedded in~$S^3$ as in the figure.
Let $\S \times I$ be a regular neighbourhood of the Heegaard surface~$\S$ in~$S^3$.
The translate $\S_+ = \S \times \{1\}$ appears on the inside
and $\S_- = \S \times \{0\}$ appears on the outside of~$\Sigma$ of $\S = \S \times \{1/2\}$.

With the notation established before Proposition~\ref{prop:saddles-s},
we have $\td C = UNB|_{\td c}$. We let $\td C_+ = \td C \times \{1\} \subset \Sigma_+$
and $\td C_- = \td C \times \{0\} \subset \Sigma_-$.
By Proposition~\ref{prop:saddles-s}, the attaching circle of the 2-handle in~$\Ws$ is $\mb L = b \cup c$,
where $b \subset \td C$ and $c$ lies on the square~$S \subset M_0$.
Let $B(\mb L)$ be the bouquet obtained by connecting~$\mb L$ to~$\td C_+$
via a straight arc contained entirely in~$\td C \times I$.

Consider now the triple diagram $\mc T = (\S',\ba,\bb,\bd)$ in Figure~\ref{fig:HDsaddle},
which is also embedded in~$S^3$.
In order to construct the sutured manifold associated to $(\Sigma', \ba, \bb)$,
take $\Sigma' \times I$, and attach 3-dimensional 2-handles to $\Sigma'_- = \S' \times \{0\}$
along the $\alpha$-curves and to $\Sigma'_+ = \S' \times \{1\}$ along the $\beta$-curves.
We denote these 2-handles by $H_{\alpha_i}$ and $H_{\beta_i}$.
Given subsets $\ba' \subset \ba$ and $\bb' \subset \bb$,
let $\Sigma'(\ba', \bb')$ be the sutured manifold obtained from $(\Sigma' \times I, \partial \S' \times I)$
by attaching $H_\a$ and $H_\b$ for $\a \in \ba'$ and $\b \in \bb'$.
The manifold $\Sigma' \times I$ can also be viewed as a subset of~$S^3$,
as well as the handle $H_{\alpha_1}$, whose core lies in the plane of the link projection
as it arose from the Ozsv\'ath-Szab\'o diagram described above.
So it follows that $\Sigma'(\{\alpha_1\},\emptyset)$ is also embedded in~$S^3$.
Note that the square~$S$ is contained within $H_{\alpha_1}$.

The manifold $\Sigma'(\{\alpha_1, \alpha_2\},\emptyset)$ is obtained from
$\Sigma'(\{\alpha_1\},\emptyset)$ by attaching the handle~$H_{\alpha_2}$.
The upper half of $\alpha_2$ is parallel to the upper half of $\alpha_1$
along the top of Figure~\ref{fig:HDsaddle} and outside of it,
hence we can slide the upper half of $H_{\alpha_2}$ along~$H_{\alpha_1}$
until the upper boundary becomes a horizontal arc across~$H_{\alpha_1}$,
and $H_{\alpha_2}$ becomes perpendicular to $H_{\alpha_1}$.
After the isotopy, $H_{\alpha_2}$ connects the small handle in the centre of Figure~\ref{fig:HDsaddle}
to $H_{\alpha_1}$.
This implies that $\Sigma'(\{\alpha_1, \alpha_2\},\emptyset)$ is also embedded in~$S^3$,
and we can isotope the small handle in the centre of Figure~\ref{fig:HDsaddle} onto $H_{\alpha_1}$.
Attaching all the other handles along the $\alpha$- and $\beta$-curves except $\beta_1$,
one obtains $\Sigma'(\{\alpha_1, \ldots, \alpha_g\}, \{\beta_2, \ldots, \beta_{g}\})$,
which is the same manifold as the one defined by the Heegaard diagram in Figure~\ref{fig:hdannulus}
-- i.e., $N$ -- with the difference that a neighbourhood of $B(\mb L)$ has been removed from it.
The curve $\beta_1$ is the meridian of $\mb L$, and $\delta_1$ is exactly the framing given
by Proposition~\ref{prop:saddles-s}. Therefore, this is a Heegaard diagram subordinate to the 2-handle attachment.
\end{proof}

\subsection{The map associated to a birth cobordism}
\label{sec:morse-births}
\label{sec:birth}

We now turn to the description of the map associated to a birth cobordism.
Let $V$ denote a 2-dimensional vector space over $\mb F_2$,
generated by two homogeneous elements: $T$ in Alexander grading $0$ and Maslov grading $1/2$,
and $B$ in Alexander grading $0$ and Maslov grading $-1/2$. Then we have the following result.

\begin{theorem}
\label{thm:births}
Suppose that $\mc B$ is a birth cobordism from $(Y, L, P)$ to
\[
(Y, L_1, P_1) = (Y \# S^3, L \sqcup U_1, P \sqcup P_{U_1}),
\]
where $(S^3, U_1, P_{U_1})$ is an unknot with two decorations.
Then there is an isomorphism
\[
\HFLh(Y, L_1, P_1) \cong \HFLh(Y,L,P) \otimes V
\]
such that $F_{\mc B}(x) = x \otimes T$ for every $x \in \HFLh(Y,L,P)$.
\end{theorem}

\begin{proof}
Let $(M, \gamma) = \mc W(Y, L, P)$ and
$(M_1, \gamma_1) = \mc W(Y \# S^3, L \sqcup U_1, P \sqcup P_{U_1})$ denote the sutured manifolds complementary to the links.
As usual, we write the cobordism $\mc W(\mc B)$ as the composition of a boundary cobordism~$\Wb$
from $(M,\gamma)$ to $(N, \gamma_1)$, and a special cobordism~$\Ws$ from $(N, \gamma_1)$ to~$(M_1,\gamma_1)$, where
\[
(N, \gamma_1) = (M, \gamma) \sqcup (S^1 \times D^2, \nu),
\]
and $\nu$ consists of two longitudinal sutures. Note that $\SFH(S^1 \times D^2,\nu) \cong \F_2$, hence
\[
\SFH(N, \gamma_1) \cong \SFH(M, \gamma) \otimes \F_2
\]
in a canonical way by the multiplicativity of sutured Floer homology;
see \cite[Theorem 11.11]{cobordisms}.
Since one component of the boundary cobordism is the identity of $(M, \gamma)$,
by multiplicativity of $\SFH$, the gluing map
\[
\Phi_{-\xi} \colon \SFH(M, \gamma) \to \SFH(M, \gamma) \otimes \mb F_2
\]
is either the zero map or the tautological isomorphism.
Since $\mc B$ has a left inverse, which is a saddle cobordism,
by functoriality the map $\Phi_{-\xi}$ must be the tautological isomorphism.

We now consider the special cobordism $\Ws$, for which we have the following lemma.

\begin{lemma}
\label{lem:specialbirth}
The special cobordism $\Ws$ associated to a birth consists of
a 1-handle attached to $N \times I$ along $N \times \{1\}$
that connects the two components of $N \times \{1\}$.
\end{lemma}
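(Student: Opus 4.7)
The plan is to follow the approach used in Lemmas~\ref{lem:2handle} and~\ref{lem:product} for the saddle case: exhibit an explicit 1-handle $\handle \subset \Ws$ connecting the two components of $N$, and then verify that $\overline{\Ws \setminus \handle}$ is a product cobordism from the surgered sutured manifold $\bar N$ to $(M_1,\gamma_1)$.

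First I would set up local coordinates. Choose coordinates $(x, y, z, t)$ in a neighbourhood of the tip $(p, t_0)$ of $D$ in which the tip sits at the origin and $D$ is in the Morse normal form $D = \{\,(u, v, 0, t_0 + u^2 + v^2)\,\}$ for the index-$0$ critical point of $h|_D$. Outside a small ball $\mc U$ around the tip, $F$ is a product, so the cobordism is trivially a product there; by choosing the boundary cobordism $\Wb$ to absorb everything outside $\mc U$, all of the nontrivial topology of $\Ws$ is concentrated in $\mc U$, and the $Z_D$-component of $N$ can be realised as a copy of $\partial \nu_\rho(D) \cong D^2 \times S^1$ sitting just outside the tubular neighbourhood.

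Next I would identify the 1-handle explicitly. Let $\rho > 0$ be the radius of the tubular neighbourhood of $F$ used to form $\Bl_F(S^3 \times I)$, and pick small $x_0 > \rho$. The core of the handle is the arc
$$c = \left\{\,(x_0, 0, 0, t) \,\colon\, t \in \left[\,0,\, t_0 + (x_0 - \rho)^2\,\right]\,\right\}.$$
Its lower endpoint lies in the $M_0$-component of $N$ (since $p$ lies in the interior of $M_0$ and $x_0$ is small), and its upper endpoint is the first intersection of the vertical line $\{(x_0, 0, 0)\} \times I$ with $\partial \nu_\rho(D)$, and hence lies on the $Z_D$-component of $N$. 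The arc $c$ itself lies entirely in $\Ws$. A parallel pushoff of $c$ in the $y$-direction provides a framing $f$, and a regular neighbourhood of $c$ furnishes a 4-dimensional 1-handle $\handle \cong D^1 \times D^3$ whose attaching region $S^0 \times D^3 \subset N$ consists of one ball in each component.

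Finally I would prove that $\bar{\Ws} := \overline{\Ws \setminus \handle}$ is a product cobordism, closely mirroring Lemma~\ref{lem:product}. The vector field $\partial/\partial t$ is non-vanishing on $\bar{\Ws}$, precisely because $\handle$ has been designed to absorb the index-$0$ critical point of $h|_F$ at the tip of $D$; its flow lines therefore connect $\bar N$ to $M_1$. Rescaling $t$ as in~\eqref{eq:rescaling} converts it into a Morse function on $\bar{\Ws}$ without critical points, and standard Morse theory then yields the diffeomorphism $\bar{\Ws} \cong \bar N \times I$. The main obstacle is confirming, inside $\mc U$, that the $t$-slices of $\Ws \setminus \handle$ interpolate by a genuine product: the slice topology of $\Ws \cap \mc U$ jumps from a $3$-ball (for $t < t_0$) to a $3$-ball minus a thickened circle (for $t > t_0$), and the explicit choice of $\handle$ must account for exactly this change so that no residual critical behaviour remains. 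This verification is the direct analogue of the analysis carried out for the 2-handle in the saddle case and proceeds by the same flow-rescaling argument.
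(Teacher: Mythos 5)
Your overall strategy is the same as the paper's: exhibit an explicit 1-handle $\handle \subset \Ws$ joining the two components of $N$, and then show $\bar\Ws = \overline{\Ws \setminus \handle}$ is a product by the vertical flow of $\de/\de t$ together with the rescaling trick of Lemma~\ref{lem:product}. Your handle is even the right one up to isotopy. The gap is in the final verification. With your thin tube around the vertical arc at $(x_0,0,0)$, the complement $\bar\Ws$ still contains the whole region lying \emph{underneath} the birth disk (spatial points in the shadow of $D$, at times before the vertical line through them reaches the tubular neighbourhood of $D$, away from the tube). A vertical line through such a point meets $\Ws$ in \emph{two} segments: a lower one from $M_0 \times \{0\}$ up to the outside of the neighbourhood of $D$, and an upper one from the inside of that neighbourhood up to $M_1$. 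Since the entire boundary of the tubular neighbourhood of $D$ (the solid torus $D \times S^1$) is part of the incoming end $N$, the lower segments are flow lines of $\de/\de t$ that begin \emph{and end} on $\bar N$. So the assertion ``its flow lines therefore connect $\bar N$ to $M_1$'' is false for your handle, and the argument of Lemma~\ref{lem:product} --- which needs every vertical line to meet $\bar\Ws$ in a single segment running from the incoming end to the outgoing end --- does not apply. You correctly identify this as ``the main obstacle,'' but then dismiss it by saying the saddle-case flow-rescaling argument handles it; for your choice of $\handle$ it simply does not. (The claim that $\Wb$ can be ``chosen to absorb everything outside $\mc U$'' is also not available: the decomposition $\mc W = \Ws \circ \Wb$ is prescribed by the construction, not a free choice.)

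The paper's proof avoids this by taking a \emph{fat} handle: $\handle = B_R(O) \times [0,1/2]$, where $B_R(O)$ is the entire ball in which the birth occurs, and by parametrizing the sections of $\Ws$ so that $\Ws_t = M_0$ for $t \le 1/2$, $\Ws_t = M_0 \setminus B_R(O)$ for $1/2 < t < 3/4$, and the sections grow back to $M_1$ for $t \ge 3/4$. Then the problematic region under the disk is either contained in the handle or absent from the sections, every vertical line meets $\bar\Ws$ in a single segment from $\bar N$ to $M_1$, and the Lemma~\ref{lem:product} argument goes through verbatim; the 1-handle structure $\handle \cong [0,1/2] \times B_R(O) \cong D^1 \times D^3$ with feet $B_R(O) \times \{0\} \subset M_0$ and $B_R(O) \times \{1/2\}$ in the solid-torus component of $N$ is immediate. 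To repair your write-up, either replace your thin tube by this fat handle (adjusting the parametrization of $\Ws_t$ accordingly), or keep your handle and add an argument that it is isotopic, as a handle attached to $N$ inside $\Ws$, to the fat one; as written, the product structure on your $\bar\Ws$ is unproved.
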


\begin{proof}
Without loss of generality, we can assume that the birth takes place in a ball~$B$ disjoint from~$L$.
We can suppose that the sections $\Ws_t$ of the special cobordism inside $Y \times I$ appear as follows:
\begin{itemize}
\item{for $t \in [0,1/2]$, we have $\Ws = M$,}
\item{for $t \in (1/2,3/4)$, we have $\Ws = M \setminus B$, and}
\item{for $t \in [3/4,1]$, since $U_1 \subseteq B$, we can suppose that, if $t > \bar t$,
      then $\Ws_t \supseteq \Ws_{\bar t}$, and $\Ws_1 = M_1$.}
\end{itemize}

\begin{figure}
\centering
\resizebox{0.3\hsize}{!}{\begin{tikzpicture}[crossline/.style={preaction={draw=white, -, line width=5pt}}]
\usetikzlibrary{patterns}

\def\u{0.6cm};
\def\h{4*\u};
\def\v{4*\u};
\def\b{3*\u};
\def\c{4*\u};
\def\hshift{12*\u};
\def\d{0.23570226039*\u};
\def\smallradius{\u/15};

%
%

\begin{scope}[shift={(\hshift,0)}]

\begin{scope}[shift={(0,0-\v)}]
\def\halfradius{\h};
\draw (0,0) ellipse (\halfradius*1 and \halfradius/3);
\draw[very thin, white, fill=white] (0-1.2*\halfradius,0) rectangle (1.2*\halfradius,\halfradius/2);
\draw[dashed] (0,0) ellipse (\halfradius*1 and \halfradius/3);
\end{scope}

\begin{scope}[shift={(-\h/2,0-\v)}]
\def\halfradius{\u};
\draw (0,0) ellipse (\halfradius*1 and \halfradius/3);
\draw[very thin, white, fill=white] (0-1.2*\halfradius,0) rectangle (1.2*\halfradius,\halfradius/2);
\draw[dashed] (0,0) ellipse (\halfradius*1 and \halfradius/3);
\draw (-\halfradius,0) node[anchor=east] {$-$};
\draw (\halfradius,0) node[anchor=west] {$+$};
\end{scope}

%

\begin{scope}[shift={(-\h/2,0)}]
\draw (-\u, \v) -- (-\u, -\v);
\draw (\u, \v) -- (\u, -\v);
\draw[color=green] (-3*\d, \v-\d) -- (-3*\d, -\v-\d);
\draw[color=green] (3*\d, \v-\d) -- (3*\d, -\v-\d);
\end{scope}

\draw (\h/2-\u,\v) .. controls (\h/2-\u,\v-\c) and (\h/2+\u,\v-\c) .. (\h/2+\u,\v);
\draw[color=green] (\h/2-3*\d,\v-\d) .. controls (\h/2-3*\d,\v-\d-0.8*\c) and (\h/2+3*\d,\v-\d-0.8*\c) .. (\h/2+3*\d,\v-\d);


\begin{scope}[shift={(0,\v)}]
\def\halfradius{\h};
\draw[crossline] (0,0) ellipse (\halfradius*1 and \halfradius/3);
\end{scope}

\begin{scope}[shift={(\h/2,\v)}]
\def\halfradius{\u};
\draw (0,0) ellipse (\halfradius*1 and \halfradius/3);
\draw (-\halfradius,0) node[anchor=east] {$-$};
\draw (\halfradius,0) node[anchor=west] {$+$};
\end{scope}
\begin{scope}[shift={(-\h/2,\v)}]
\def\halfradius{\u};
\draw (0,0) ellipse (\halfradius*1 and \halfradius/3);
\draw (-\halfradius,0) node[anchor=east] {$-$};
\draw (\halfradius,0) node[anchor=west] {$+$};
\end{scope}

\begin{scope}[shift={(0,\v)}]
\draw[color=red,fill=red] (-\h/2, \u/3) circle (\smallradius);
\draw[color=red,fill=red] (-\h/2, -\u/3) circle (\smallradius);
\draw[color=red,fill=red] (\h/2, \u/3) circle (\smallradius);
\draw[color=red,fill=red] (\h/2, -\u/3) circle (\smallradius);
\end{scope}
\begin{scope}[shift={(-\h/2,-\v)}]
\draw[color=red,fill=red] (0, \u/3) circle (\smallradius);
\draw[color=red,fill=red] (0, -\u/3) circle (\smallradius);
\end{scope}

\draw (-\h, \v) -- (-\h, -\v);
\draw (\h, \v) -- (\h, -\v);

\begin{scope}[shift={(\h/2,0)}]
\draw[color=blue,fill=blue] (0, -\v) circle (\smallradius);
\draw[color=blue,fill=blue] (0, \v-0.753*\c) circle (\smallradius);
\draw[thick,blue] (-0, -\v) -- (0,\v-0.753*\c);
\end{scope}

\end{scope}

\end{tikzpicture}}
\caption{The figure shows a birth cobordism from $(U_1, P_{U_1})$ to $(U_2, P_{U_2})$.
The core of the 1-handle~$\handle^1$ is shown in blue.}
\label{fig:handles2}
\end{figure}
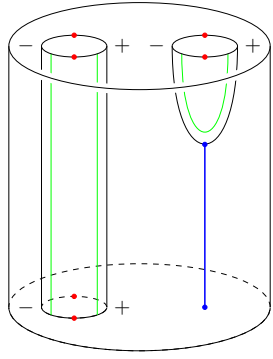

Choose a local coordinate system $\R^3 \subset Y$ containing~$B$. We define
\[
\handle^1 := \left\{\, (x,y,z,t) \in \R^3 \times I \,\colon\, (x,y,z) \in B \text{ and } t \in [0,1/2]\,\right\}.
\]
This is a 1-handle that connects the two components of~$N$.
An illustration of~$\handle^1$ in a particular link cobordism
can be found in Figure~\ref{fig:handles2}.
The 1-handle structure is clear from the diffeomorphism
\[
\handle^1 \cong [0,1/2] \times B \cong D^1 \times D^3.
\]
The height function on $\Ws\setminus\handle^1$ does not have any critical points,
and can be rescaled to be a Morse function.
It follows that $\Ws$ consists of the single 1-handle $\handle^1$.
\end{proof}

Hence, by \cite[Definition 7.5]{cobordisms}, the map $F_{\Ws}$ is
\begin{align*}
F_{\Ws} \colon \SFH(M, \gamma) &\to \SFH(M, \gamma) \otimes V.\\
x &\mapsto x \otimes T
\end{align*}
It follows that the composition $F_{\mc B} = F_{\Ws} \circ \Phi_{-\xi}$ is as claimed.
\end{proof}

\subsection{The map associated to a death cobordism}
\label{sec:morse-deaths}
\label{sec:death}

For a death cobordism $\mc D = (Y \times I, F, \sigma)$ from $(Y \# S^3, L \sqcup U_1, P \sqcup P_{U_1})$ to $(Y, L, P)$,
where $(S^3, U_1, P_{U_1})$ is the unknot with two decorations,
we have a result dual to Theorem~\ref{thm:births}.
As before, $V$ denotes a 2-dimensional vector space over~$\mb F_2$,
generated by two homogeneous elements: $T$ in Alexander grading~$0$ and Maslov grading~$1/2$,
and $B$ in Alexander grading~$0$ and Maslov grading~$-1/2$.

\begin{theorem}
\label{thm:deaths}
Let $\mc D$ be a death cobordism from
\[
(Y, L_0, P_0) = (Y \# S^3, L \sqcup U_1, P \sqcup P_{U_1})
\]
to $(Y, L, P)$. Then there is an isomorphism
\[
\HFLh(Y, L_0, P_0) \cong \HFLh(Y,L,P) \otimes V
\]
such that $F_{\mc D}(x \otimes T) = 0$ and $F_{\mc D}(x \otimes B) = x$ for every $x \in \HFLh(Y,L_0,P_0)$.
\end{theorem}

\begin{proof}
By turning the cobordism $\mc W(\mc D)$ upside down,
we obtain a cobordism $\bar{\W(\mc D)}$ complementary to a birth,
which we can break into a boundary cobordism $\bar{\Wb}$
followed by a special cobordism $\bar{\Ws}$.
By turning this decomposition upside down,
we get a decomposition $\W(\mc D) = \Wb \circ \Ws$, where
\begin{itemize}
\item $\Ws$ is the \emph{special} cobordism obtained by turning $\bar{\Ws}$ upside down;
\item $\Wb$ is a (\emph{not necessarily boundary}) cobordism, obtained by turning $\bar{\Wb}$ upside down.
\end{itemize}
Let $(M_0,\g_0) = \mc W(Y,L_0,P_0)$ and $(M,\g) = \mc W(Y,L,P)$.
Then $\Ws$ is a cobordism from $(M_0, \gamma_0) \cong (M, \gamma) \# (S^1 \times D^2, \nu)$
to $(M, \gamma) \sqcup (S^1 \times D^2, \nu)$, where $\nu$ consists of two longitudinal sutures.
The map $F_{\Ws}$ is dual to $F_{\bar{\Ws}}$ by \cite[Theorem~11.8]{cobordisms}.
More explicitly, $\Ws$ consists of a single 3-handle
dual to the 1-handle $\handle^1$ from Lemma~\ref{lem:specialbirth}.
By \cite[Definition~7.8]{cobordisms}, there is an isomorphism
\[
\SFH(M_0, \gamma_0) \cong \SFH(M, \gamma) \otimes V
\]
such that the map
\[
F_{\Ws} \colon \SFH(M, \gamma) \otimes V \to \SFH(M, \gamma) \otimes \mb F_2
\]
is given by $F_{\Ws}(x \otimes T) = 0$ and $F_{\Ws}(x \otimes B) = x \otimes 1$
for all $x \in \SFH(M, \gamma)$. Note that, as in Section~\ref{sec:morse-births},
we are identifying $\SFH((M, \gamma) \sqcup (S^1 \times D^2, \nu))$
with $\SFH(M, \gamma) \otimes \mb F_2$ via the canonical isomorphism
given by the multiplicativity of sutured Floer homology.

We now need to understand the map
\[
F_{\Wb} \colon \SFH(M, \gamma) \otimes \mb F_2 \to \SFH(M, \gamma).
\]
By multiplicativity of sutured Floer homology, this is either the tautological isomorphism or the zero map.
Since $\mc D$ has a right inverse, which is a saddle preceded by two stabilisations (see Figure~\ref{fig:saddle}),
the map $F_{\Wb}$ is the tautological isomorphism.
Hence the composition $F_{\mc D} = F_{\Wb} \circ F_{\Ws}$ is as claimed.
\end{proof}

\subsection{Duality}
\label{sec:duality}

We conclude this section with a few remarks about duality and gradings.
If $(Y \times I, F, \sigma)$ is a decorated link cobordism from $(Y, L_0, P_0)$ to
$(Y, L_1, P_1)$ (or more generally, a sutured cobordism from $(M_0, \gamma_0)$
to $(M_1, \gamma_1)$), then one can turn it upside down
to get a decorated link cobordism from $(-Y, L_1, P_1)$
to $(-Y, L_0, P_0)$ (or more generally a sutured cobordism
from $(-M_1, \gamma_1)$ to $(-M_0, \gamma_0)$).

In \cite[Theorem~11.9]{cobordisms}, the first author proved
that if you turn a \emph{special} cobordism upside down, then
the map induced on $\SFH$ is the dual map, and asked \cite[Question~11.10]{cobordisms}
whether this is true for any sutured cobordism. A weaker question
is whether this is true at least for decorated link cobordisms.
An approach to answer this (weaker) question would be to check
what happens for elementary cobordisms.

The positive (respectively negative) stabilisation and destabilisation
are obtained from each other by turning the cobordism upside down,
and we already checked that the maps that they induce are dual
to each other.
Analogously, the birth and the death cobordisms induce maps that are
dual to each other.

\begin{figure}
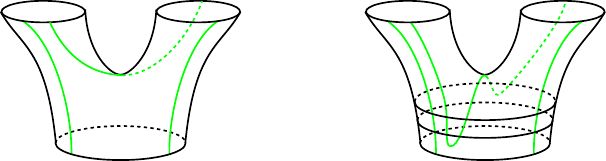
\caption{If we turn a saddle cobordism upside down, then we can write it as a composition of two
stabilizations, followed by another saddle cobordism.}
\label{fig:saddle}
\end{figure}
The situation is more complicated for the saddle maps.
First, note that if we turn a saddle cobordism upside down,
then the height function has a local minimum on~$\sigma$ as opposed to
a maximum, so this is not an elementary saddle cobordism. However,
as in Figure~\ref{fig:saddle}, we can write this as a composition of
two stabilizations and another saddle cobordism (which is a split saddle if the
original cobordism was a merge saddle, and vice versa).

Let $\mc W$ be the sutured cobordism complementary to a saddle.
As usual, we write it as $\Ws \circ \Wb$, where
\begin{itemize}
\item{$\Wb$ is a boundary cobordism from $(M_0, \gamma_0)$ to $(N, \gamma_1)$,
given by gluing along a product annulus, and}
\item{$\Ws$ is a special cobordism from $(N, \gamma_1)$ to $(M_1, \gamma_1)$
    that consists of a single 2-handle.}
\end{itemize}
By turning this decomposition upside down, we obtain a decomposition
for the upside down sutured cobordism $\bar\W$ as $\bar{\W}^b \circ \bar{\W}^s$, where
\begin{itemize}
\item{$\bar{\W}^s$ is a special cobordism from $(-M_1, \gamma_1)$ to $(-N, \gamma_1)$
    (consisting of the 2-handle dual to the one for $\Ws$), and}
\item{$\bar{\W}^b$ is a cobordism from $(-N, \gamma_1)$ to $(-M_0, \gamma_0)$
    that is \emph{not} a boundary cobordism.}
\end{itemize}
The map $F_{\Ws}$ induced by the special cobordism is dual to
the map $F_{\bar{\W}^s}$ by \cite[Theorem~11.9]{cobordisms}.

The map $F_{\bar{\W}^b}$ is not so clear.
If one could prove that this is the inverse of
the isomorphism induced by the product annulus decomposition
\[
(-N, \gamma_1) \rightsquigarrow (-M_0, \gamma_0),
\]
then the upside down cobordism would induce the dual map.
Therefore, the question about duality in the decorated link category
boils down to the following.

\begin{question}
\label{qn:duality}
Let $(N, \gamma_1) \overset{A}{\rightsquigarrow} (M_0, \gamma_0)$
be a product annulus decomposition, and let $\mc W$ be the boundary cobordism
from $(M_0, \gamma_0)$ to $(N, \gamma_1)$ given by gluing along the product annulus~$A$.
Then do we have $F_{\bar\W} = \Phi_A^{-1}$?
\end{question}

\subsection{Gradings}
\label{sec:morse-gradings}
\label{sec:gradings}

Let $(Y,L,P)$ be a null-homologous, oriented, decorated link
in a 3-manifold $Y$ and $\t \in \SpinC(Y)$ a torsion $\SpinC$ structure.
Recall that $\HFLh(Y,L,P,\t)$ can naturally be endowed with two gradings:
the Alexander grading, denoted by $\A$,
and the Maslov grading, denoted by $\agr$. They were both defined in Section~\ref{sec:origrad}.
We will also consider the $\delta$-grading, defined as $\delta = \A-\agr$,
following the convention established by Manolescu and Ozsv\'ath \cite{manolescu2007khovanov}.

\begin{table}%
\def\arraystretch{1.5}\tabcolsep=10pt
\begin{tabular}{l|cc}
 & Alexander grading & Maslov grading\\ \hline
Positive (de)stabilisation & $+\frac12$ & $+\frac12$\\
Negative (de)stabilisation & $-\frac12$ & $-\frac12$\\
Birth or death & $0$ & $+\frac{1}{2}$\\
Saddle & $0$ & $-\frac{1}{2}$\\
\end{tabular}
\vspace{5pt}
\caption[{Table \ref{tab:gradings}}]{The table shows the shift in the Alexander grading and the Maslov grading
associated to each elementary cobordism.}
\label{tab:gradings}
\end{table}

\begin{theorem}
\label{thm:grading}
Let $Y$ be a closed oriented 3-manifold and $\t \in \SpinC(Y)$ a torsion $\SpinC$ structure.
Furthermore, let $(Y,L_0,P_0)$ and $(Y,L_1,P_1)$ be oriented, null-homologous, decorated links.
If $\mc X = (Y \times I, F, \sigma)$ is a decorated link cobordism,
then the map
\[
F_{\mc X} \colon \HFLh(Y,L_0,P_0,\t) \to \HFLh(Y,L_1,P_1,\t)
\]
is homogeneous with respect to both the Alexander and the Maslov gradings.
Furthermore, if $\mc X$ is an elementary cobordism,
then the grading shifts are as given in Table~\ref{tab:gradings}.
Consequently, $F_{\mc X}$ shifts the Maslov grading by
\begin{equation} \label{eq:Mshift}
\frac{\chi(F) + \chi(R_+(\sigma)) - \chi(R_-(\sigma))}{2},
\end{equation}
and the Alexander grading by
\begin{equation} \label{eq:Ashift}
\frac{\chi(R_+(\sigma)) - \chi(R_-(\sigma))}{2}.
\end{equation}
\end{theorem}

\begin{remark}
As $\delta = \A-\agr$, an immediate consequence of Theorem~\ref{thm:grading} is that $F_{\mc X}$ is also $\delta$-homogeneous, and that the $\delta$-grading shift is $-\chi(F)/2$, and therefore does not depend on the decorations.
\end{remark}

\begin{proof}
Since every decorated cobordism in $Y \times I$ is a product of elementary cobordisms by Proposition~\ref{prop:generation},
and formulas~\eqref{eq:Mshift} and~\eqref{eq:Ashift} are additive under composition of decorated link cobordisms, it is sufficient to prove that the maps associated to elementary cobordisms are homogeneous.

By Proposition~\ref{prop:stab}, a stabilization map $F_{\mc S}$ factorizes as $I_{\mc S} \circ s_{\mc S}$,
where $I_{\mc S}$ is homogeneous and preserves the bigrading. Furthermore,
$s_{\mc S}(x) = x \otimes t$ in case of a positive stabilisation, and $s_{\mc S}(x) = x \otimes b$
in case of a negative stabilisation.
The bigrading of $t$ is $(1/2,1/2)$, and the bigrading of~$b$ is $(-1/2,-1/2)$,
which agrees with the grading shift.
By Proposition~\ref{prop:destab}, a destabilisation map $F_{\mc D}$ is of the form $d_{\mc D} \circ I_{\mc D}$,
where $I_{\mc D}$ is homogeneous and preserves the bigrading.
Furthermore, the grading shift of $d_{\mc D}$ is $(1/2,1/2)$ for a positive destabilisation,
and $(-1/2,-1/2)$ for a negative destabilisation.
Note that the grading shifts of stabilisations and destabilisations agree since they are dual to each other.

By Theorem \ref{thm:births}, the birth cobordism map is induced by the tensor product with the element~$T$
of the vector space~$V$ generated by~$T$ in bigrading $(0,1/2)$
and~$B$ in bigrading $(0,-1/2)$.
Therefore the map is homogeneous and the grading shift is $(\A, \agr) = (0, 1/2)$.
The death cobordism map is dual to the birth cobordism map (see Theorem \ref{thm:deaths}), so the grading shift is the same.

By Theorem~\ref{thm:exacttriangle}, the saddle map agrees with the map~$f$ in the skein exact
sequence in Theorem~\ref{thm:skein}, which preserves the Alexander grading and decreases the
Maslov grading by~$1/2$.

Finally, isotopies induce diffeomorphism maps, which preserve both the Alexander and Maslov gradings.
\end{proof}

\subsection{Elementary link cobordisms between unlinks}
\label{sec:computations}
\label{sec:TQFT-elc}

This section is devoted to computing the maps induced
on~$\HFLh$ by elementary decorated link cobordisms between unlinks in~$S^3$.
We denote by $(U_n, P_n)$, or, with a slight abuse of notation, just by~$U_n$,
the standard $n$-component unlink with two decorations on each component.
We often consider $U_n$ as embedded in the plane $\{z=0\} \subseteq \R^3 \subseteq S^3$,
with the centres of the components on the $x$-axis.
If $(c_k,0,0)$ is the centre of the $k$-th component $U_n^k$,
then we suppose that for every $(x,y,0) \in R_-(P_n) \cap U_n^k$, we have $x \le c_k$,
and for every $(x,y,0) \in R_+(P_n) \cap U_n^k$, we have $x \ge c_k$
(roughly speaking, the $-$ arc is on the left and the $+$ arc is on the right).
We will often omit the labels $-$ and $+$ in our figures.

The figures illustrating decorated link cobordisms will follow the conventions explained
in Figure~\ref{fig:handles}: The cylinder enclosing a cobordism
represents the section $\R^2 \times \{0\} \times I \subset S^3 \times I$,
with coordinates as shown in Figure~\ref{fig:handles}.
In this section, we focus on the following four cobordisms.

\begin{definition}
\label{def:defcobordisms}
Let $\CobV$ denote the split saddle cobordism from $(U_1, P_1)$ to $(U_2, P_2)$,
with decorations as in Figure~\ref{fig:defcobordisms}.
Note that this is \emph{not} an elementary decorated cobordism in the sense of Definition~\ref{def:mergesaddle},
because the height function restricted to the decorations has a minimum rather than a maximum.

Let $\CobLambda$ denote the merge saddle cobordism from $(U_2, P_2)$ to $(U_1, P_1)$,
with decorations as in Figure~\ref{fig:defcobordisms}.

Let $\CobGamma$ denote the birth cobordism from $(U_1, P_1)$ to $(U_2, P_2)$,
with decorations as in Figure~\ref{fig:defcobordisms}.

Let $\CobL$ denote the death cobordism from $(U_2, P_2)$ to $(U_1, P_1)$,
with decorations as in Figure~\ref{fig:defcobordisms}.
\end{definition}

We will now determine the maps associated to the cobordisms defined above.
Recall that $V$ is the vector space $\mb F_2\langle{B,T}\rangle$ generated by
$B$ in bigrading $(0,-1/2)$ and $T$ in bigrading $(0,1/2)$.

\begin{figure}
\resizebox{\textwidth}{!}{\input{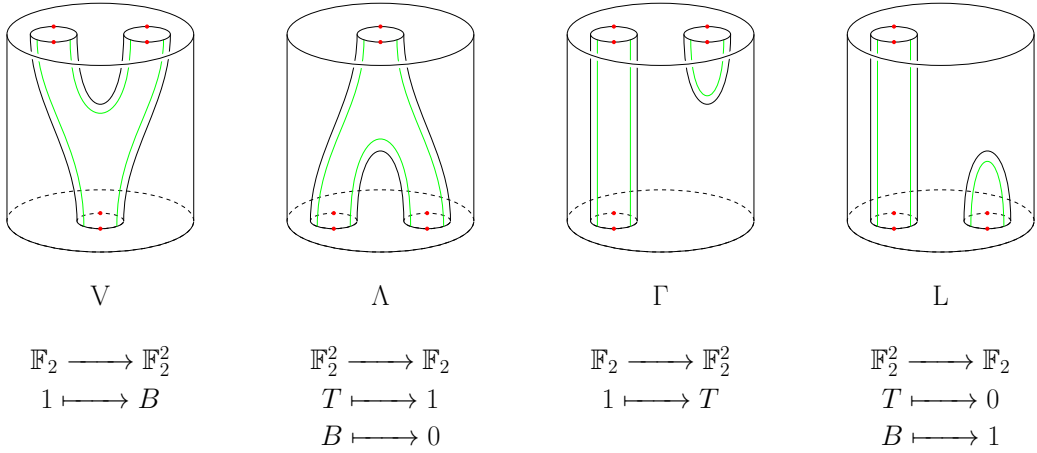}}
\caption{The decorated link cobordisms $\CobV$, $\CobLambda$, $\CobGamma$, and $\CobL$, and the maps induced on link Floer homology.
The cylinders represent the section $\{z=0\}$ in $\R^3 \times I$.
}
\label{fig:defcobordisms}
\end{figure}

\begin{theorem}
\label{thm:unlinkcob}
Under the canonical identifications
\[
\HFLh(U_1,P_1) \cong \mb F_2 \qquad \text{and} \qquad \HFLh(U_2,P_2) \cong V,
\]
the maps associated to the decorated link cobordisms in
Definition~\ref{def:defcobordisms} are as in Figure~\ref{fig:defcobordisms}.
\end{theorem}

\begin{proof}
Consider the cobordism $\CobV$. By Theorem~\ref{thm:grading},
we know that $F_{\CobV}$ is homogeneous with respect to the Maslov grading,
and that the Maslov grading shift is $-1/2$. Therefore $F_{\CobV}(1)$ is either to $0$ or to $B$.
Since $\CobL \circ \CobV = \id_{(U_1, P_1)}$, by functoriality $F_{\CobV} \neq 0$,
hence $F_{\CobV}(1) = B$.

Similarly, $F_{\CobLambda}$ shifts the Maslov grading by $-1/2$ and is non-vanishing
since $\CobLambda \circ \CobGamma = \id_{(U_1,P_1)}$, hence $F_{\CobLambda}(T) = 1$
and $F_{\CobLambda}(B) = 0$.

By the above, $F_{\CobGamma}$ and $F_{\CobL}$ are also non-vanishing, and they shift
the Maslov grading by $1/2$ according to Theorem~\ref{thm:grading}.
If follows that $F_{\CobGamma}(1) = T$, while $F_{\CobL}(T) = 0$ and $F_{\CobL}(B) = 1$.
\end{proof}

\begin{remarkpro}
\label{rem:deco}
If the decorated cobordism $(S,\sigma)$ has the same underlying surface~$S$ as one of
$\CobV$, $\CobLambda$, $\CobGamma$, or $\CobL$,
but possibly different decorations, then $F_{(S,\sigma)}$ is also given by
the respective formula in Figure~\ref{fig:defcobordisms},
provided that the following conditions are satisfied:
\begin{itemize}
\item there are two decorations on each boundary component,
\item there is a partial (left or right) inverse in the category of decorated link cobordisms, and
\item $\chi(R_+(\sigma)) = \chi(R_-(\sigma))$.
\end{itemize}
The proof goes in the same way as for Theorem~\ref{thm:unlinkcob}.
\end{remarkpro} 

\section{A disjoint union formula}
\label{sec:disjointunionformula}
In this section, we focus on a formula for the disjoint union of link
cobordisms in~$S^3 \times I$, with the aim of putting the maps associated to the cobordisms
in Figure~\ref{fig:defcobordisms} into a TQFT.
Although we actually need a minimal version of it, we give a proof
in the general setting that might be of independent interest;
cf.~Theorem~\ref{thm:cs}.

It is well known (see for instance \cite[Proposition 9.15]{SFH})
that the link Floer homology of the ``split'' disjoint union of two
decorated links $(K,P_K)$ and $(L,P_L)$ satisfies
\begin{equation}
\label{eq:disjointunion}
\HFLh(K \sqcup L, P_K \sqcup P_L) \cong \HFLh(K, P_K) \otimes \HFLh(L, P_L) \otimes \mb F_2^2.
\end{equation}
We would like to understand if the cobordism map associated to a
disjoint union of link cobordisms splits in a similar way.

\begin{definition}
Suppose that $\mc X_K=(F, \sigma_F)$ is a decorated link
cobordism from $(K_0, P_{K_0})$ to $(K_1, P_{K_1})$,
and that $\mc X_L=(G, \sigma_G)$ is a decorated link cobordism
from $(L_0, P_{L_0})$ to $(L_1, P_{L_1})$. We define
the \emph{disjoint union cobordism} to be the cobordism in
$S^3 \times I$ obtained by taking the split disjoint union of the two
surfaces $F$ and $G$ with decoration $\sigma_F \sqcup \sigma_G$.
We denote it by $\mc X_K \cs \mc X_L$.
Such a cobordism connects the decorated link $(K_0 \sqcup L_0, P_{K_0} \sqcup P_{L_0})$ (the ``split''
disjoint union of $(K_0, P_{K_0})$ and $(L_0, P_{L_0})$)
to the decorated link $(K_1 \sqcup L_1, P_{K_1} \sqcup P_{L_1})$.
\end{definition}

One would hope that the map induced by the cobordism
$\mc X_K \cs \mc X_L$ on link Floer homology becomes
$F_{\mc X_K} \otimes F_{\mc X_L} \otimes \id_{\mb F_2^2}$,
after the applying the formula in~\eqref{eq:disjointunion}
to $\hat\HFL(K_0 \sqcup L_0, P_{K_0} \sqcup P_{L_0})$ and
$\hat\HFL(K_1 \sqcup L_1, P_{K_1} \sqcup P_{L_1})$.
We will show that this is true under some mild assumptions.

We will actually prove it in a generalised setting, by considering
sutured manifolds. Recall that the complement of a decorated link is naturally
a sutured manifold; see Definition~\ref{def:dl}. By \cite[Proposition~9.15]{SFH},
equation~\eqref{eq:disjointunion} generalises as follows:
If $(M, \gamma)$ and $(N, \ni)$ are balanced sutured manifolds, then
\begin{equation}
\label{eq:connectedsum}
\SFH\left((M,\gamma)\#(N,\ni)\right) \cong \SFH(M,\gamma) \otimes \SFH(N,\ni) \otimes \mb F_2^2.
\end{equation}
Actually, we need to make the splitting in Equation~\eqref{eq:connectedsum}
functorial, and for this purpose, we use \emph{framed pairs of points},
\emph{bouquets}, and \emph{sutured diagrams subordinate to bouquets}, as
defined in \cite[Section~7]{cobordisms}.

Given a framed pair of points $\mb P$ in the interior of $(M,\gamma) \sqcup (N, \ni)$
such that $p_- \in M$ and $p_+ \in N$, let $M^\circ = \Bl_{p_-}(M)$
and $N^\circ = \Bl_{p_+}(N)$. We write $M \#_\P N$ for the connected
sum obtained by gluing the sphere $UNp_- \subset M^\circ$ to $UNp_+ \subset N^\circ$
using the framings of $T_{p_-}M$ and $T_{p_+}N$. (In other words, we perform
surgery on $(M,\gamma) \sqcup (N, \ni)$ along the framed 0-sphere $\mb P$.)
We can view $M^\circ$ and $N^\circ$ as embedded in $M \#_\P N$,
and we denote by $S^2$ the connected sum sphere $M^\circ \cap N^\circ$.
The relative Mayer-Vietoris sequences in cohomology for
$(M , \de M ) = (M^\circ, \de M) \cup B^3$,
$(N , \de N ) = (N^\circ, \de N) \cup B^3$, and
$(M \#_\P N, \de(M \#_\P N)) = (M^\circ, \de M) \cup (N^\circ, \de N)$
imply that the map
\begin{IEEEeqnarray*}{rccc}
\label{eq:phi1}
\phi: & \,\SpinC(M, \gamma) \times \SpinC(N, \ni)\,
& \xrightarrow{\qquad} & \,\SpinC(M \#_\P N, \gamma \cup \ni) \IEEEyesnumber\\
& \,(\mf s_M,\mf s_N)\, & \xmapsto{\qquad} & \, (\s_M)|_{M^\circ} \cup (\s_N)|_{N^\circ}
\end{IEEEeqnarray*}
is well-defined and injective. We will write $\s_M \#_\P \s_N = \phi(\s_M, \s_N)$.
Furthermore, $\mf s \in \SpinC(M \#_\P N, \gamma \cup \ni)$
is in the image of $\phi$ if and only if $\langle\, c_1(\s), [S^2] \,\rangle = 0$.
In this case, we call $\mf s|_M$ and $\mf s|_N$ the unique $\SpinC$ structures
such that $\phi(\mf s|_M, \mf s|_N) = \mf s$.

The following is a $\SpinC$ refinement of \cite[Proposition 9.15]{SFH}.

\begin{lemma}
\label{lem:canonicalcs}
Let $\mb P$ be a framed pair of points in the sutured manifold
$(M, \gamma) \sqcup (N, \ni)$, such that $p_- \in M$
and $p_+ \in N$. Let $\mc H_M = (\Sigma_M, \ba_M, \bb_M)$ and
$\mc H_N = (\Sigma_N, \ba_N, \bb_N)$ be admissible balanced
diagrams of $(M,\g)$ and $(N,\nu)$, respectively,
such that $\mc H_M \sqcup \mc H_N$ is subordinate to a bouquet $B(\P)$ for~$\P$.
Let $\mc H_M \#_\P \mc H_N$ be the diagram of $M \#_\P N$
obtained by taking the connected sum of~$\S_M$ and~$\S_N$
along~$\mb P$, and adding a meridional $\a$-curve and a $\b$-curve
on the connected sum tube that intersect in two points transversely.
If $\s \in \SpinC(M \#_{\mb P} N, \gamma \sqcup \ni)$ is
such that $\langle\, c_1(\s), [S^2] \,\rangle \neq 0$, then
\[
\SFH(\mc H_M \#_\P \mc H_N, \s) = 0.
\]
If $\langle\, c_1(\s), [S^2] \,\rangle = 0$, then there is an isomorphism
\[
\varphi_{\mc H_M, \mc H_N, \P, \s} \colon
\SFH(\mc H_M \#_\P \mc H_N, \s) \to \SFH(\mc H_M, \s|_M) \otimes \SFH(\mc H_N, \s|_N) \otimes V,
\]
where $V = \mb F_2\langle T, B \rangle$ denotes a 2-dimensional
vector space generated by two homogeneous elements $T$ (top) and
$B$ (bottom).
Furthermore, there is an isomorphism
\[
\varphi_{\mc H_M, \mc H_N, \P} \colon
\SFH(\mc H_M \#_\P \mc H_N) \to \SFH(\mc H_M) \otimes \SFH(\mc H_N) \otimes V.
\]
\end{lemma}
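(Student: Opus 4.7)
The plan is to exhibit an explicit Heegaard diagram for $\mc H_M \#_\P \mc H_N$, identify the Spin$^c$ decomposition of its chain complex via the Mayer--Vietoris description of $\phi$, and conclude via the K\"unneth/neck-stretching argument of \cite[Proposition~9.15]{SFH}.

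By assumption, $\mc H_M \sqcup \mc H_N$ is subordinate to the bouquet $B(\P)$, so the points $p_-$ and $p_+$ lift to small embedded disks in $\Sigma_M$ and $\Sigma_N$ disjoint from the attaching curves. I would form $\Sigma = \Sigma_M \# \Sigma_N$ by removing these disks and identifying the resulting boundary circles according to the framing of $\P$, producing a sutured Heegaard diagram for $(M,\gamma) \#_\P (N,\ni)$ together with a distinguished pair of curves $\alpha_0$ and $\beta_0$ on the connecting neck whose intersection $\{T, B\}$ has exactly two points; see the construction in \cite[Section~7]{cobordisms}. The generators of the resulting complex are tuples $(\x_M, \x_N, p)$ with $\x_M \in \mb T_{\alpha_M} \cap \mb T_{\beta_M}$, $\x_N \in \mb T_{\alpha_N} \cap \mb T_{\beta_N}$, and $p \in \{T, B\}$, and admissibility of the two summands forces admissibility of $(\Sigma, \ba, \bb)$ since no new positive periodic domain can be supported on the neck.

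Next I would analyse the Spin$^c$ decomposition. Let $S^2 \subset M \#_\P N$ denote the separating sphere $\partial \Bl_{p_-}(M) = \partial \Bl_{p_+}(N)$, which meets $\Sigma$ along the meridian of the neck. The Mayer--Vietoris description of $\phi$ preceding the statement shows that $\im(\phi)$ consists precisely of those Spin$^c$-structures on $M \#_\P N$ that restrict trivially to $S^2$. For any generator $(\x_M, \x_N, p)$, the chain used to define $\s(\x_M, \x_N, p)$ can be chosen to avoid the neck meridian, so $\s(\x_M, \x_N, p) = \s_M(\x_M) \#_\P \s_N(\x_N)$ independently of $p \in \{T, B\}$. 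In particular, every generator lies in the image of $\phi$, which simultaneously proves the vanishing $\SFH(\mc H_M \#_\P \mc H_N, \s) = 0$ whenever $\s|_{S^2} \neq 0$ and yields a Spin$^c$-graded chain-level identification
\[
\CF(\mc H_M \#_\P \mc H_N, \s) \,\cong\, \CF(\mc H_M, \s|_M) \otimes \CF(\mc H_N, \s|_N) \otimes V
\]
for $\s \in \im(\phi)$, where $V = \F_2 \langle T, B \rangle$.

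Finally, I would verify that this identification intertwines differentials. For this, I would invoke the neck-stretching argument of \cite[Proposition~9.15]{SFH}: as the length of the neck of $\Sigma$ is stretched to infinity, every rigid holomorphic disk limits to a configuration supported entirely in $\Sigma_M$ or $\Sigma_N$, paired with a trivial strip across the neck, so the differential respects the tensor decomposition. The only Whitney disks joining $T$ and $B$ are the two small bigons in the neck on either side of the meridian, whose contributions cancel over $\F_2$; hence the differential on the $V$-factor vanishes. Passing to homology produces $\varphi_{\mc H_M, \mc H_N, \P, \s}$ for each $\s$ with $\s|_{S^2} = 0$, and summing over all such $\s$ yields $\varphi_{\mc H_M, \mc H_N, \P}$. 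The main technical obstacle is the neck-stretching degeneration, which is standard in Heegaard Floer theory; everything else reduces to Mayer--Vietoris bookkeeping for Spin$^c$-structures and a verification that admissibility is preserved under the connected sum.
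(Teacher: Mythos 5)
Your construction of the connected-sum diagram with the extra pair of curves on the neck, the $\SpinC$ bookkeeping via the Mayer--Vietoris description of $\phi$, and the chain-level bijection all match the paper's argument. The gap is at the decisive step, where you claim the differential respects the tensor decomposition. You justify this by an appeal to neck-stretching (``every rigid holomorphic disk limits to a configuration supported entirely in $\Sigma_M$ or $\Sigma_N$, paired with a trivial strip across the neck'') together with the assertion that the only Whitney discs joining $T$ and $B$ are the two bigons in the neck. Neither claim is automatic for a tube attached at arbitrary interior points: a positive domain may have nonzero multiplicity in the regions of $\Sigma_M$ and $\Sigma_N$ containing the feet of the tube, and such a domain then spills onto the end-annuli of the neck, producing Whitney discs with corners at $T$ or $B$ that are not bigons and that genuinely mix the three tensor factors. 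Ruling these out is exactly what requires an argument, and citing ``standard'' neck-stretching here is not a proof --- one would have to carry out the degeneration and gluing analysis and exclude matched configurations crossing the neck, which you do not do.

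What you have not used is the full strength of the hypothesis that $\mc H_M \sqcup \mc H_N$ is subordinate to the bouquet $B(\P)$; you invoke it only to say that $p_\pm$ avoid the attaching curves. By definition, the arcs of the bouquet join $p_\pm$ to the boundary of the sutured manifolds, and in a subordinate diagram they lie on $\Sigma_M$ and $\Sigma_N$ disjoint from $\ba_M \cup \bb_M$ and $\ba_N \cup \bb_N$; hence the feet of the connecting tube lie in regions adjacent to $\partial\Sigma_M$ and $\partial\Sigma_N$. Since every domain counted in sutured Floer homology (and every periodic domain) has multiplicity zero in the regions meeting $\partial\Sigma$, no positive domain can touch the ends of the neck at all. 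Consequently every Whitney-disc domain splits as a disjoint union of a domain on $\Sigma_M$, a domain on $\Sigma_N$, and a domain on the neck annulus, holomorphic representatives split accordingly, and the only nonconstant contributions on the annulus are the two bigons between $T$ and $B$, which cancel over $\F_2$. This purely combinatorial observation is how the paper closes the argument; it also yields the admissibility of the glued diagram that you asserted, and it removes any need for the neck-stretching step on which your proof currently rests.
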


\begin{proof}
Notice that for each generator $\x \in \CF(\mc H_M \#_\P \mc H_N)$, we have $\langle\, c_1(\s(\x)), [S^2] \,\rangle = 0$.
Therefore, $\SFH(\mc H_M \#_\P \mc H_N, \s) = 0$ unless $\langle\, c_1(\s), [S^2] \,\rangle = 0$.
There is a self-evident bijection
\[
\CF(\mc H_M \#_\P \mc H_N, \s) \to \CF(\mc H_M, \s|_M) \otimes \CF(\mc H_N, \s|_N) \otimes V.
\]
Since $\mc H_M \sqcup \mc H_N$ is subordinate to the bouquet $B(\mb P)$,
all holomorphic discs on $\Sigma_M \# \Sigma_N$ split as the disjoint union
of a holomorphic disc on $\Sigma_M$, one on $\Sigma_N$, and one on the annulus
$A$ attached to the framed pair of points $\P$. Each of these discs has
multiplicity~$0$ near the boundary.
It follows that the above bijection descends to a map on homology.
\end{proof}

The following result shows that the isomorphisms defined
in Lemma~\ref{lem:canonicalcs} are natural.

\begin{lemma}
\label{lem:naturalitycs}
Let $\mb P$ be a framed pair of points in the sutured manifold
$(M, \gamma) \sqcup (N, \ni)$, such that $p_- \in M$
and $p_+ \in N$. Let $\mc H_M$ and $\mc H'_M$ be admissible balanced diagrams
for $(M, \gamma)$, and let $\mc H_N$ and $\mc H'_N$ be admissible balanced diagrams
for $(N, \ni)$, such that $\mc H_M \sqcup \mc H_N$ and
$\mc H'_M \sqcup \mc H'_N$ are subordinate to the
bouquets $B(\mb P)$ and $B(\mb P)'$ for~$\mb P$, respectively.
If $\s \in \SpinC(M \#_{\mb P} N, \gamma \sqcup \ni)$ is such that $\langle\, c_1(\s), [S^2] \,\rangle = 0$,
then the following diagram commutes:
\begin{center}
\begin{tikzpicture}[description/.style={fill=white,inner sep=2pt}]
\matrix (m) [matrix of math nodes, row sep=4em,
column sep=7.5em, text height=1.7ex, text depth=0.3ex]
{\SFH(\mc H_M \#_\P \mc H_N, \s) & \SFH(\mc H_M, \s|_M) \otimes \SFH(\mc H_N, \s|_N) \otimes V \\
 \SFH(\mc H'_M \#_\P \mc H'_N, \s) & \SFH(\mc H'_M, \s|_M) \otimes \SFH(\mc H'_N, \s|_N) \otimes V. \\};
	\path[->,font=\scriptsize]
		(m-1-1) edge node[above]{$\varphi_{\mc H_M, \mc H_N, \P, \s}$} (m-1-2)
		(m-2-1) edge node[above]{$\varphi_{\mc H'_M, \mc H'_N, \P, \s}$} (m-2-2)
		(m-1-1) edge node[description]{$F_{\mc H_M \#_\P \mc H_N, \mc H'_M \#_\P \mc H'_N}$} (m-2-1)	
		(m-1-2) edge node[description]{$F_{\mc H_M, \mc H'_M} \otimes F_{\mc H_N, \mc H'_N} \otimes \id_V$} (m-2-2);
\end{tikzpicture}
\end{center}
Here $F_{\mc H, \mc H'}$ denotes the naturality map defined in \cite{naturality}.
An analogous statement holds for $\varphi_{\mc H_M, \mc H_N, \P}$ and
$\varphi_{\mc H'_M, \mc H'_N, \P}$.
\end{lemma}
\begin{proof}
The proof is analogous to \cite[Theorem 7.6]{cobordisms}.
\end{proof}

\begin{corollary}
\label{cor:canonicalcs}
Let $\mb P$ be a framed pair of points in the sutured manifold
$(M, \gamma) \sqcup (N, \ni)$, such that $p_- \in M$ and $p_+ \in N$.
If $\s \in \SpinC(M \#_{\mb P} N, \gamma \sqcup \ni)$ is
such that $\s|_{S^2} \neq0$, then
\[
\SFH(M \#_\P N, \s) = 0.
\]
If $\s|_{S^2} = 0$, then there is a well-defined isomorphism
\[
\varphi_{\P, \s} \colon
\SFH(M \#_\P N, \s) \to \SFH(M, \s|_M) \otimes \SFH(N, \s|_N) \otimes V.
\]
Furthermore, there is an isomorphism
\[
\varphi_{\P} \colon
\SFH(M \#_\P N) \to \SFH(M) \otimes \SFH(N) \otimes V.
\]
\end{corollary}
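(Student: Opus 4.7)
The plan is to derive Corollary~\ref{cor:canonicalcs} as a direct consequence of Lemmas~\ref{lem:canonicalcs} and~\ref{lem:naturalitycs}, together with the naturality theorem for sutured Floer homology of Juh\'asz--D.~Thurston. Recall that, by naturality, the group $\SFH(M,\gamma)$ is defined abstractly as a transitive system: for every admissible balanced diagram $\mc H$ for $(M,\gamma)$ one has a concrete chain complex $\CF(\mc H)$, and for any two such diagrams $\mc H$, $\mc H'$ a canonical naturality isomorphism $F_{\mc H,\mc H'}$ on homology, satisfying the cocycle condition. The same framework applies to $(N,\ni)$ and to $(M\#_{\mb P} N,\gamma\cup\ni)$, and all of these groups split along $\SpinC$ structures functorially.

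First I would verify that, for any bouquet $B(\mb P)$ for the framed pair of points~$\mb P$, there exist admissible balanced diagrams $\mc H_M$ for $(M,\gamma)$ and $\mc H_N$ for $(N,\ni)$ such that $\mc H_M\sqcup\mc H_N$ is subordinate to $B(\mb P)$; this is standard and proceeds by choosing Heegaard diagrams whose Heegaard surfaces contain the feet of the $1$-handle attached along the bouquet, followed by winding if needed to achieve admissibility. Next, the vanishing statement for $\mf s|_{S^2}\neq 0$ is immediate: for any such subordinate diagrams, Lemma~\ref{lem:canonicalcs} gives $\SFH(\mc H_M\#_{\mb P}\mc H_N,\mf s)=0$, hence the abstract group $\SFH(M\#_{\mb P} N,\mf s)$ vanishes via any of the naturality isomorphisms.

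For $\mf s|_{S^2}=0$, Lemma~\ref{lem:canonicalcs} provides, for each choice of admissible diagrams $\mc H_M,\mc H_N$ subordinate to some bouquet $B(\mb P)$, an isomorphism
\[
\varphi_{\mc H_M,\mc H_N,\mb P,\mf s}\colon \SFH(\mc H_M\#_{\mb P}\mc H_N,\mf s)\to \SFH(\mc H_M,\mf s|_M)\otimes \SFH(\mc H_N,\mf s|_N)\otimes V.
\]
Lemma~\ref{lem:naturalitycs} asserts precisely that these isomorphisms commute with the naturality maps $F_{\mc H_M,\mc H'_M}$, $F_{\mc H_N,\mc H'_N}$, and $F_{\mc H_M\#_{\mb P}\mc H_N,\mc H'_M\#_{\mb P}\mc H'_N}$, \emph{even} when the two choices $(\mc H_M,\mc H_N)$ and $(\mc H'_M,\mc H'_N)$ are subordinate to different bouquets $B(\mb P)$ and $B(\mb P)'$ for the same~$\mb P$. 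Consequently, the $\varphi_{\mc H_M,\mc H_N,\mb P,\mf s}$ fit together coherently and descend to a single, diagram-independent isomorphism
\[
\varphi_{\mb P,\mf s}\colon \SFH(M\#_{\mb P} N,\mf s)\to \SFH(M,\mf s|_M)\otimes \SFH(N,\mf s|_N)\otimes V
\]
on the abstract groups. Summing over $\mf s\in\SpinC(M\#_{\mb P}N,\gamma\cup\ni)$ and using the vanishing for $\mf s|_{S^2}\neq 0$, the composition with the map $\phi$ from~\eqref{eq:phi1} yields the desired global isomorphism $\varphi_{\mb P}$.

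The only genuine work is in Step~3, i.e.\ confirming that Lemma~\ref{lem:naturalitycs} really handles the passage between different bouquets, not just between different diagrams subordinate to a fixed one; since any two bouquets for the same $\mb P$ differ by an isotopy in the complement of the Heegaard surface and of~$\mb P$, this is already included in the scope of~\cite[Theorem~7.6]{cobordisms} invoked in the proof of Lemma~\ref{lem:naturalitycs}. Apart from this bookkeeping, no new ideas are required: the corollary is essentially a formal upgrade of the diagram-level statements to the naturality-theoretic setting.
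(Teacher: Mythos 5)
Your proof is correct and is essentially the argument the paper intends: the corollary is obtained by combining Lemma~\ref{lem:canonicalcs} (diagram-level vanishing and splitting) with Lemma~\ref{lem:naturalitycs} (compatibility with the naturality maps of~\cite{naturality}, including for different bouquets $B(\mb P)$ and $B(\mb P)'$), so the diagram-level isomorphisms descend to the abstract groups $\SFH(M \#_\P N, \s)$. No genuinely different route or missing step is involved.
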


We now generalise the notion of framed pair of points to cobordisms,
in order to define the connected sum of sutured cobordisms in a
natural way.

\begin{definition}
Let $\mc W = (W, Z_W, [\xi_W])$ be a cobordism from $(M_0, \gamma_0)$ to $(M_1, \gamma_1)$,
and let $\mc U=(U, Z_U, [\xi_U])$ be a cobordism from $(N_0, \ni_0)$ to $(N_1, \ni_1)$.
A \emph{framed pair of arcs} $\mb A$ for $\mc W$ and $\mc U$ consists of two properly
embedded arcs $a_- \colon I \to W$ and $a_+ \colon I \to U$,
such that $a_-(i) \in M_i$ and $a_+(i) \in N_i$ for $i \in \{0,1\}$,
together with a negative frame $\langle v_1^-(t), v_2^-(t), v_3^-(t), v_4^-(t) \rangle$
of $T_{a_-(t)}W$ and a positive frame $\langle v_1^+(t), v_2^+(t), v_3^+(t), v_4^+(t) \rangle$
of $T_{a_+(t)}U$ for all $t\in I$, such that
\begin{itemize}
\item{$\mb A(i) := \{a_-(i), a_+(i)\}$ with the frames
$\langle v_1^\pm(i), v_2^\pm(i), v_3^\pm(i) \rangle$ is a framed pair
of points in $(M_i \sqcup N_i, \gamma_i \sqcup \ni_i)$ for $i \in \{0,1\}$, and}
\item{$v_4 = a'_\pm = da_\pm/dt$.}
\end{itemize}
\end{definition}

Given a framed pair of arcs, we can define the connected sum of
the two cobordisms, as explained in Definition~\ref{def:cs}, below.
We denote by $(M, \gamma) \#_\P (N, \ni)$ the connected
sum of the sutured manifolds $(M, \gamma)$ and $(N, \ni)$
along the framed pair of points $\P$, assuming that $p_- \in M$
and $p_+ \in N$.

\begin{definition}
\label{def:cs}
Let $\mc W = (W, Z_W, [\xi_W])$ be a cobordism from $(M_0, \gamma_0)$ to $(M_1, \gamma_1)$,
let $\mc U=(U, Z_U, [\xi_U])$ be a cobordism from
$(N_0, \ni_0)$ to $(N_1, \ni_1)$, and let
$\mb A$ be a framed pair of arcs for $\mc W$ and $\mc U$.

We define the \emph{connected sum} of $\mc W$ and $\mc U$ to be
the cobordism
\[
\mc W \cs_{\mb A} \mc U =(V, Z_V, [\xi_V])
\]
from
$(M_0, \gamma_0) \#_{\mb A(0)} (N_0, \ni_0)$
to $(M_1, \gamma_1) \#_{\mb A(1)} (N_1, \ni_1)$, where
\begin{itemize}
\item{$V$ is the $4$-dimensional manifold obtained by gluing
$\Bl_{a_-}W$ and $\Bl_{a_+}U$ along $UNa_-$ and $UNa_+$,
in such a way that the frame of $a_-(t)$ is identified with the
frame of $a_+(t)$,}
\item{$Z_V = Z_W \sqcup Z_U$, and}
\item{$\xi_V = \xi_W \sqcup \xi_U$.}
\end{itemize}
\end{definition}

\begin{remarkpro}
\label{rem:cs}
Let $\mc X_K = (F, \sigma_F)$ be a decorated link cobordism from $(K_0, P_{K_0})$ to $(K_1, P_{K_1})$,
and let $\mc X_L = (G, \sigma_G)$ be a decorated link cobordism from $(L_0, P_{L_0})$ to $(L_1, P_{L_1})$.
Let $(x_-, y_-, z_-) \in S^3$ be such that the arc
$a_-(t) = (x_-, y_-, z_-, t)$ for $t \in I$
is disjoint from $F$. This holds for generic $(x_-, y_-, z_-)$, after possibly isotoping~$F$.
Analogously, choose $(x_+, y_+, z_+) \in S^3$
and define $a_+$ to be disjoint from~$G$. Consider the pair of arcs
$\mb A = \{a_-, a_+\}$ with frames
$\langle \pm\de_x, \de_y, \de_z, \de_t \rangle$. Then
\[
\W(\mc X_K \cs \mc X_L) = \W(\mc X_K) \cs_{\mb A} \W(\mc X_L).
\]
\end{remarkpro}

As in the case of the connected sum of sutured manifolds,
one can define the connected sum of Spin$^c$ structures for cobordisms.
Let $\mc W =(W, Z_W, [\xi_W])$ and $\mc U=(U, Z_U, [\xi_U])$
be cobordisms of sutured manifolds, and let
$\mc W \cs_{\mb A} \mc U = (W \cs_{\mb A} U, Z, [\xi])$
denote their connected sum along some framed pair of arcs $\mb A$.
Let $W^\circ=\Bl_{a_-}W$ and $U^\circ=\Bl_{a_+}U$. Then $W^\circ$ and
$U^\circ$ are embedded in $W \cs_{\mb A} U$.
Denote by $S^2 \times I$ the thickened sphere $W^\circ \cap U^\circ$.
Then, the relative Mayer-Vietoris sequences for
$(W , Z_W ) = (W^\circ, Z_W) \cup B^3 \times I$,
$(U , Z_U ) = (U^\circ, Z_U) \cup B^3 \times I$, and
$(W \cs_{\mb A} U, Z) = (W^\circ, Z_W) \cup (U^\circ, Z_U)$ imply
that the map
\begin{IEEEeqnarray*}{rccc}
\phi: & \,\SpinC(W, Z_W) \times \SpinC(U, Z_U)\,
& \xrightarrow{\qquad} & \,\SpinC(W \cs_{\mb A} U, Z) \\
& \,(\s_W, \s_U)\, & \xmapsto{\qquad} & \, (\s_W)|_{W^\circ} \cup (\s_U)|_{U^\circ}
\end{IEEEeqnarray*}
is well-defined and injective. We will denote $\phi(\s_W, \s_U)$ by $\s_W \cs_{\mb A} \s_U$.
Furthermore, $\mf s \in \SpinC(W \cs_{\mb A} U, Z)$
is in the image of $\phi$ if and only if $\langle\, c_1(\s), [S^2 \times \{0\}] \,\rangle = 0$.
In this case, we call $\mf s|_W$ and $\mf s|_U$ the unique $\SpinC$ structures
such that $\phi(\mf s|_W, \mf s|_U) = \mf s$.

\begin{definition} \label{def:bouquet}
Let $\mb A = \left(a_\pm, \langle v_1^\pm, \dots, v_4^\pm \rangle \right)$
be a framed pair of arcs for the cobordisms $\mc W = (W, Z_W, [\xi_W])$
from $(M_0,\g_0)$ to $(M_1,\g_1)$ and $\mc U = (U, Z_U, [\xi_U])$ from $(N_0,\nu_0)$ to $(N_1,\nu_1)$.
We consider the coordinates $(s,t)$ on the square $I \times I$.
A \emph{bouquet} $B(\mb A)$ for $\mb A$ consists of a pair of embedded squares
$d_- \colon I \times I \to W$ and $d_+ \colon I \times I \to U$, and a framing
of $d_\pm$ given by a normal vector field $v_\pm$ such that
\begin{enumerate}
\item{$d_\pm(0,t) = a_\pm(t)$ and $d_\pm(1,t) \in Z_W \cup Z_U$ for all $t \in I$,}
\item{if $i \in \{0,1\}$, then the arc $d_\pm(s,i)$ with framing $v_\pm (s,i)$, which we denote by $B(\mb A)_i$,
is a bouquet for $\mb A(i)$ (in particular, $d_-(s,i) \subset M_i$ and $d_+(s,i) \subset N_i$),}
\item{$\de_s d_\pm (0, t) = v_1^\pm$ and $\de_s d_\pm (1,t)$ is transverse to $Z$,}
\item{$v_\pm (0, t) = v_2^\pm$,}
\item{$\xi = \langle \de_t d_\pm, v_\pm \rangle$ along $d_\pm(1,t)$, and} \label{it:xi}
\item{$\de_t d_\pm (s, i)$ is transverse to $M_i \sqcup N_i$ for $i \in \{0,1\}$.}
\end{enumerate}
\end{definition}

\begin{theorem}
\label{thm:cs}
Let $\mc W = (W, Z_W, [\xi_W])$ be a balanced cobordism from $(M_0, \gamma_0)$ to
$(M_1, \gamma_1)$, and let $\mc U = (U, Z_U, [\xi_U])$ be a balanced
cobordism from $(N_0, \ni_0)$ to $(N_1, \ni_1)$.
Let $\mb A$ be a framed pair of arcs for $\mc W$ and $\mc U$,
and suppose that there exists some bouquet $B(\mb A)$ for $\mb A$.
Then the following diagram commutes:
\begin{center}
\begin{tikzpicture}[description/.style={fill=white,inner sep=2pt}]
\matrix (m) [matrix of math nodes, row sep=4em,
column sep=7.5em, text height=1.7ex, text depth=0.3ex]
{\SFH(M_0 \#_{\mb A(0)} N_0, \g_0 \sqcup \nu_0) & \SFH(M_0,\g_0) \otimes \SFH(N_0,\nu_0) \otimes V \\
 \SFH(M_1 \#_{\mb A(1)} N_1, \g_1 \sqcup \nu_1) & \SFH(M_1,\g_1) \otimes \SFH(N_1,\nu_1) \otimes V \\};
	\path[->,font=\scriptsize]
		(m-1-1) edge node[above]{$\varphi_{\mb A(0)}$} (m-1-2)
		(m-2-1) edge node[above]{$\varphi_{\mb A(1)}$} (m-2-2)
		(m-1-1) edge node[description]{$F_{\mc W \cs_{\mb A} \mc U}$} (m-2-1)	
		(m-1-2) edge node[description]{$F_{\mc W} \otimes F_{\mc U} \otimes \id_V$} (m-2-2);
\end{tikzpicture}
\end{center}
\end{theorem}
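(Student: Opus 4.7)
My plan is to reduce to elementary building blocks of sutured cobordisms and verify the connected sum formula on each piece, using the naturality of the canonical isomorphism $\varphi_{\mb A(i)}$ from Corollary~\ref{cor:canonicalcs} and Lemma~\ref{lem:naturalitycs}. First, I would invoke Remark~\ref{rem:decomposition} to decompose $\mc W = \mc W^s \circ \mc W^b$ and $\mc U = \mc U^s \circ \mc U^b$ as boundary cobordisms followed by special cobordisms. I can then split the framed pair of arcs $\mb A$ into two halves, $\mb A = \mb A' * \mb A''$, with $\mb A''$ lying in the boundary part and $\mb A'$ in the special part, so that
\[
\mc W \cs_{\mb A} \mc U \;=\; (\mc W^s \cs_{\mb A'} \mc U^s) \circ (\mc W^b \cs_{\mb A''} \mc U^b),
\]
and the bouquet $B(\mb A)$ restricts to bouquets for each half. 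By functoriality of $\SFH$, it then suffices to prove the theorem separately for boundary and special cobordisms.

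For the boundary cobordism $\mc W^b \cs_{\mb A''} \mc U^b$, the induced map is the contact gluing map of Honda--Kazez--Mati\'c associated to $\xi_V = \xi_W \sqcup \xi_U$. Since the arcs $a_\pm$ can be chosen in the interior, disjoint from the vertical boundaries $Z_W$ and $Z_U$, I would choose partial open book decompositions supported away from the connected sum region, and then apply the composition formula for gluing maps~\cite[Proposition~6.2]{gluingmap}. This exhibits the gluing map for $\mc W^b \cs_{\mb A''} \mc U^b$ as the tensor product of the individual gluings with the identity on the $V$-factor coming from the connected sum tube, which is exactly what Corollary~\ref{cor:canonicalcs} records on the target side.

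For the special cobordism $\mc W^s \cs_{\mb A'} \mc U^s$, I would further decompose each factor into a sequence of $k$-handle attachments ($k \in \{1,2,3\}$), arranged by a small isotopy so that all handles are disjoint from the arcs $a_\pm$. For each handle, I would then choose a sutured Heegaard diagram subordinate to a bouquet for the attaching sphere that is compatible with a bouquet for $\mb A'$; concretely, the diagram for the connected sum cobordism should literally be the connected sum of the individual Heegaard diagrams along the framed pair of points, in the sense of Lemma~\ref{lem:canonicalcs}. The key point is then that since the handle and its framing are disjoint from the connected sum tube, holomorphic discs and (for 2-handles) holomorphic triangles split as disjoint unions of discs/triangles on the two sides together with small discs on the annulus attached to the framed pair of points. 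This splitting of moduli spaces, combined with the naturality statement of Lemma~\ref{lem:naturalitycs} (applied in the triple-diagram setting), yields the tensor-product factorization.

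The main obstacle I anticipate is the careful bookkeeping in the special cobordism step: ensuring that bouquets for attaching spheres and bouquets for the framed pair of arcs can be realized simultaneously by a single compatible Heegaard diagram, and promoting Lemma~\ref{lem:canonicalcs} from plain Heegaard diagrams to triple (and quadruple) diagrams so that the count of pseudo-holomorphic triangles factors over the connected sum as required. Once the triangle-counting argument is in place for a single 2-handle (which is the only nontrivial handle index), the 1- and 3-handle cases follow from analogous but strictly easier disc-count arguments, and stitching together all the elementary pieces via functoriality completes the proof.
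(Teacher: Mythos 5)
Your overall strategy matches the paper's: decompose $\mc W$ and $\mc U$ into boundary and special cobordisms, split $\mb A$ accordingly, handle the boundary piece via gluing maps and the special piece via a handle decomposition with Heegaard diagrams subordinate simultaneously to the attaching spheres and to a bouquet for the framed points, so that the disc and triangle counts split across the connected-sum tube. However, there is a genuine gap: you never address the fact that the framed arcs $a_\pm$ need not be vertical, so the connected-sum tube at the outgoing end sits at $\mb A(1)$, which is in general a different pair of points than $\mb A(0)$. Your boundary-cobordism step (diagrams ``supported away from the connected sum region,'' plus the composition formula for gluing maps) only computes the map when the connected-sum region is held fixed; it identifies $F$ with a tensor product relative to the splitting at $\mb A(0)$ viewed inside $M_1 \sqcup N_1$, whereas the vertical map in the statement at the outgoing end is $\varphi_{\mb A(1)}$, taken at the endpoints of the (possibly knotted, wandering) arcs. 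Bridging these two identifications is exactly the content of the paper's first case, where both $\mc W$ and $\mc U$ are identity cobordisms but $\mb A$ is arbitrary: there the squares $d_\pm$ of the bouquet $B(\mb A)$ are used to build a trivializing neighbourhood, produce a diffeomorphism carrying the standard local diagram at $\mb A(0)$ to the one at $\mb A(1)$, and show the induced map respects the splitting and is the identity on the $V$-factor by a relative-grading argument on the connected-sum tube. That case is then invoked to finish the boundary-cobordism step, and it is the place where the bouquet hypothesis is used in an essential, non-bookkeeping way; the example following the theorem (the cobordism $\CobGamma \circ \CobL$ written as $\mc X_1 \cs \mc X_2$) shows the conclusion genuinely fails without a bouquet, so any proof must exploit it beyond arranging convenient Heegaard diagrams.

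Concretely, to repair your argument you would either need to insert this ``identity cobordism with moving arcs'' lemma and compose it with your fixed-point computation, or show directly (again using the squares of $B(\mb A)$, not just bouquets for the endpoint pairs) that the arcs can be isotoped to products in a way that changes neither the equivalence class of $\mc W \cs_{\mb A} \mc U$ nor the identifications $\varphi_{\mb A(i)}$. In the special-cobordism step the paper sidesteps this by extending the height function on $B(\mb A)$ to a Morse function, which makes $B(\mb A)$ a product and forces $\mb A(0)$ and $\mb A(1)$ to coincide after the handle attachments; your version should state this explicitly, since otherwise the same moving-endpoint issue reappears there. The rest of your outline (splitting of holomorphic curve counts for handles attached away from the bouquet, the easier $1$- and $3$-handle cases, and naturality via Lemma~\ref{lem:naturalitycs}) is in line with the paper's argument.
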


\begin{example}
The existence of a bouquet $B(\mb A)$ in the statement of Theorem~\ref{thm:cs}
is not just a technical condition, as we are going to show in this example.
Consider the cobordism $\CobGamma \circ \CobL$ (see Figure~\ref{fig:defcobordisms}
for a definition of the cobordisms $\CobGamma$ and $\CobL$). By the computations
in Subsection~\ref{sec:computations} and functoriality, we know that the map
\[
F_{\CobGamma \circ \CobL}: \F_2^2 \xrightarrow{\qquad} \F_2^2
\]
induced on~$\SFH$
has rank~$1$.

This cobordism can also be seen as the disjoint union $\mc X_1 \cs \mc X_2$
of the identity cobordism $\mc X_1$ from $(U_1, P_1)$ to $(U_1, P_1)$
and a ``death and re-birth'' cobordism $\mc X_2$ again from $(U_1, P_1)$
to $(U_1, P_1)$; see Figure~\ref{fig:gammaelle}.
Choose $\mb A$ as in Remark~\ref{rem:cs}. Then
\[
F_{\CobGamma \circ \CobL} = F_{\W(\mc X_1) \cs_{\mb A} \W(\mc X_2)}.
\]
We cannot apply Theorem~\ref{thm:cs} here because there is no bouquet for $\mb A$.
If the diagram in Theorem~\ref{thm:cs} were commutative in this case,
then the map induced by $\W(\mc X_1) \cs_{\mb A} \W(\mc X_2)$ would have rank
either $0$ or $2$, contradicting our computations.
\end{example}

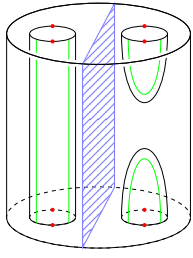
\begin{figure}
\resizebox{0.95\textwidth}{!}{\begin{tikzpicture}[crossline/.style={preaction={draw=white, -, line width=5pt}}]

\def\u{0.6cm};
\def\h{4*\u};
\def\v{4*\u};
\def\b{3*\u};
\def\c{4*\u};
\def\hshift{12*\u};
\def\d{0.23570226039*\u};
\def\smallradius{\u/15};
\def\seno{0.98480775301*\h/3};
\def\coseno{0.17364817766*\h};


\begin{scope}[shift={(0,0-\v)}]
\def\halfradius{\h};
\draw (0,0) ellipse (\halfradius*1 and \halfradius/3);
\draw[white, fill=white] (0-1.2*\halfradius,0) rectangle (1.2*\halfradius,\halfradius/2);
\draw[dashed] (0,0) ellipse (\halfradius*1 and \halfradius/3);
\end{scope}

\begin{scope}[shift={(-\h/2,0-\v)}]
\def\halfradius{\u};
\draw (0,0) ellipse (\halfradius*1 and \halfradius/3);
\draw[white, fill=white] (0-1.2*\halfradius,0) rectangle (1.2*\halfradius,\halfradius/2);
\draw[dashed] (0,0) ellipse (\halfradius*1 and \halfradius/3);
\end{scope}
\begin{scope}[shift={(\h/2,0-\v)}]
\def\halfradius{\u};
\draw (0,0) ellipse (\halfradius*1 and \halfradius/3);
\draw[white, fill=white] (0-1.2*\halfradius,0) rectangle (1.2*\halfradius,\halfradius/2);
\draw[dashed] (0,0) ellipse (\halfradius*1 and \halfradius/3);
\end{scope}

\begin{scope}[shift={(-\h/2,0)}]
\draw (-\u, \v) -- (-\u, -\v);
\draw (\u, \v) -- (\u, -\v);
\draw[color=green] (-3*\d, \v-\d) -- (-3*\d, -\v-\d);
\draw[color=green] (3*\d, \v-\d) -- (3*\d, -\v-\d);
\end{scope}

\draw (\h/2-\u,\v) .. controls (\h/2-\u,\v-\c) and (\h/2+\u,\v-\c) .. (\h/2+\u,\v);
\draw[color=green] (\h/2-3*\d,\v-\d) .. controls (\h/2-3*\d,\v-\d-0.8*\c) and (\h/2+3*\d,\v-\d-0.8*\c) .. (\h/2+3*\d,\v-\d);

\draw (\h/2-\u,-\v) .. controls (\h/2-\u,-\v+\c) and (\h/2+\u,-\v+\c) .. (\h/2+\u,-\v);
\draw[color=green] (\h/2-3*\d,-\v-\d) .. controls (\h/2-3*\d,-\v-\d+0.93*\c) and (\h/2+3*\d,-\v-\d+0.93*\c) .. (\h/2+3*\d,-\v-\d);

\begin{scope}[shift={(0,\v)}]
\def\halfradius{\h};
\draw[crossline] (0,0) ellipse (\halfradius*1 and \halfradius/3);
\end{scope}

\draw[thick, lightblue, pattern=north east lines, pattern color=lightblue] (-\coseno, \v-\seno) -- (\coseno, \v + \seno) -- (\coseno, -\v+\seno) -- (-\coseno, -\v-\seno) -- cycle;

\begin{scope}[shift={(0,\v)}]
\def\halfradius{\h};
\draw[] (0,0) ellipse (\halfradius*1 and \halfradius/3);
\end{scope}

\begin{scope}[shift={(\h/2,\v)}]
\def\halfradius{\u};
\draw (0,0) ellipse (\halfradius*1 and \halfradius/3);
\end{scope}
\begin{scope}[shift={(-\h/2,\v)}]
\def\halfradius{\u};
\draw (0,0) ellipse (\halfradius*1 and \halfradius/3);
\end{scope}

\begin{scope}[shift={(0,\v)}]
\draw[color=red,fill=red] (-\h/2, \u/3) circle (\smallradius);
\draw[color=red,fill=red] (-\h/2, -\u/3) circle (\smallradius);
\draw[color=red,fill=red] (\h/2, \u/3) circle (\smallradius);
\draw[color=red,fill=red] (\h/2, -\u/3) circle (\smallradius);
\end{scope}
\begin{scope}[shift={(-\h/2,-\v)}]
\draw[color=red,fill=red] (0, \u/3) circle (\smallradius);
\draw[color=red,fill=red] (0, -\u/3) circle (\smallradius);
\end{scope}
\begin{scope}[shift={(\h/2,-\v)}]
\draw[color=red,fill=red] (0, \u/3) circle (\smallradius);
\draw[color=red,fill=red] (0, -\u/3) circle (\smallradius);
\end{scope}

\draw (-\h, \v) -- (-\h, -\v);
\draw (\h, \v) -- (\h, -\v);

\draw (-1.5*\hshift-\h,0);
\draw (1.5*\hshift+\h,0);

\end{tikzpicture}}
\caption{The cobordism $\Gamma \circ \CobL$ can be seen as the disjoint
union cobordism of two cobordisms from $(U_1, P_1)$ to $(U_1, P_1)$,
namely the identity cobordism to the left of the blue rectangle, which
we denote by $\mc X_1$, and the one to its right, which we denote by~$\mc X_2$.}
\label{fig:gammaelle}
\end{figure}

\begin{proof}[{Proof of Theorem~\ref{thm:cs}}]

Split $\W$ and $\mc U$ into a boundary cobordism and a special
cobordism: $\mc W = \Ws \circ \Wb$ and $\mc U = \mc U^s \circ \mc U^b$.
Then $\mb A$ determines framed pairs of arcs $\mb A^b$ for $\mc W^b$ and
$\mc U^b$ and $\mb A^s$ for $\mc W^s$ and $\mc U^s$.
Note that
\[
\mc W \cs_{\mb A} \mc U =
\left(\mc W^s \cs_{\mb A^s} \mc U^s \right) \circ \left(\mc W^b \cs_{\mb A^b} \mc U^b \right)
\]
is a decomposition of $\mc W \cs_{\mb A} \mc U$ into a boundary cobordism
and a special cobordism.

Fix a bouquet $B(\mb A)$ for $\mb A$. Note that this induces bouquets
$B(\mb A^b)$ and $B(\mb A^s)$. Therefore, we can consider the cases of
boundary cobordism and special cobordism separately. We now split the
proof into three cases.

\paragraph{Case 1}
We first consider the case when $\mc W$ and $\mc U$ are identity
cobordisms, and $\mb A$ is any framed pair of arcs that admits a
bouquet~$B(\mb A)$. Let $B_-$ be a regular neighbourhood
of $\im(d_-)$ in $W$, and let $B_+$ be a regular neighbourhood
of $\im(d_+)$ in $U$.
For $i \in \{0,1\}$, let  $B_\pm(i) = B_\pm \cap (M_i \sqcup N_i)$.
By condition~\eqref{it:xi} of Definition~\ref{def:bouquet},
the contact structure~$\xi$ is parallel near the bouquet,
hence $B_\pm$ can be turned into a special cobordism
\[
\mc B_\pm \colon B_\pm(0) \to B_\pm(1)
\]
by extending~$\xi$ to $\de B_\pm \cap \text{Int}(W \cup U)$.
If we remove $\mc B_\pm$ from $\mc W$ and $\mc U$, we obtain
two identity cobordisms $\mc W^\circ = \id_M$ and $\mc U^\circ = \id_N$,
where $M = M_0 \sm B_-(0)$ and $N = N_0 \sm B_+(0)$.
Note that $\mb A$ is a framed pair of arcs
and $B(\mb A)$ is a bouquet in $\mc B_\pm$. The cobordism $\mc B_- \cs_{\mb A} \mc B_+$
is diffeomorphic to an identity cobordism, therefore it gives
a diffeomorphism
\[
\psi \colon B_-(0) \#_{\mb A(0)} B_+(0) \to B_-(1) \#_{\mb A(1)} B_+(1),
\]
and, without loss of generality, we can suppose that
\begin{itemize}
\item{$\psi(a_\pm(s,0)) = a_\pm(s,1)$, \text{ and}}
\item{$\mathrm{d}\psi (v_\pm(s,0)) = v_\pm(s,1)$.}
\end{itemize}

Let $D_\pm(0)$ be a $2$-disc embedded in $B_\pm(0)$ containing
$a_\pm(\cdot, 0)$ such that $v_\pm(\cdot, 0)$ is tangent to
$D_\pm(0)$, and let $D_\pm(1) = \psi(D_\pm(0))$. Then
$D_-(i) \#_{\mb A(i)} D_+(i)$, together with a meridional $\alpha$- and a $\beta$-curve
on the connecting tube that intersect in two points, is a Heegaard diagram~$\mc H_i$
for $B_-(i) \#_{\mb A(i)} B_+(i)$, and $\psi_*(\mc H_0) = \mc H_1$.
The map induced by such a diffeomorphism on sutured
Floer homology preserves the relative homological grading,
so it must be the identity map:
\begin{center}
\begin{tikzpicture}[description/.style={fill=white,inner sep=2pt}]
\matrix (m) [matrix of math nodes, row sep=4em,
column sep=4.5em, text height=1.7ex, text depth=0.3ex]
{\SFH(\mc H_0) & V \\
 \SFH(\mc H_1) & V. \\};
	\path[->,font=\scriptsize]
		(m-1-1) edge node[above]{$\simeq$} (m-1-2)
		(m-2-1) edge node[above]{$\simeq$} (m-2-2)
		(m-1-1) edge node[right]{$\psi_*$} (m-2-1)	
		(m-1-2) edge node[right]{$\id_V$} (m-2-2);
\end{tikzpicture}
\end{center}

We then choose Heegaard diagrams $\mc H_M$ and $\mc H_N$
for the sutured manifolds~$M$ and~$N$ such that
\[
\mc H_{M \#_{\mb A(0)} N} = \mc H_M \natural \mc H_0 \natural \mc H_N
\]
is a Heegaard diagram for the sutured manifold $M \#_{\mb A(0)} N$
(or, more precisely, for $M_0 \#_{\mb A(0)} N_0$).
The cobordism $\mc W \cs_{\mb A} \mc U$ induces a diffeomorphism
\[
\Psi_* \colon \mc H_{M \#_{\mb A(0)} N} \to \mc H_{M \#_{\mb A(1)} N} = \mc H_M \natural \mc H_1 \natural \mc H_N
\]
such that the following diagram of diffeomorphisms commutes:
\begin{center}
\begin{tikzpicture}[description/.style={fill=white,inner sep=2pt}]
\matrix (m) [matrix of math nodes, row sep=4em,
column sep=4.5em, text height=1.7ex, text depth=0.3ex]
{\mc H_{M \#_{\mb A(0)} N} & \mc H_M \natural \mc H_0 \natural \mc H_N \\
 \mc H_{M \#_{\mb A(1)} N} & \mc H_M \natural \mc H_1 \natural \mc H_N. \\};
	\path[->,font=\scriptsize]
		(m-1-1) edge node[above]{$\simeq$} (m-1-2)
		(m-2-1) edge node[above]{$\simeq$} (m-2-2)
		(m-1-1) edge node[right]{$\Psi_*$} (m-2-1)	
		(m-1-2) edge node[description]{$\id_{\mc H_M} \natural \psi_* \natural \id_{\mc H_N}$} (m-2-2);
\end{tikzpicture}
\end{center}
From this, it follows that the diagram below commutes:
\begin{center}
\begin{tikzpicture}[description/.style={fill=white,inner sep=2pt}]
\matrix (m) [matrix of math nodes, row sep=4em,
column sep=7.5em, text height=1.7ex, text depth=0.3ex]
{\SFH(\mc H_{M \#_{\mb A(0)} N}) & \SFH(\mc H_M) \otimes \SFH(\mc H_N) \otimes V \\
 \SFH(\mc H_{M \#_{\mb A(1)} N}) & \SFH(\mc H_M) \otimes \SFH(\mc H_N) \otimes V. \\};
	\path[->,font=\scriptsize]
		(m-1-1) edge node[above]{$\varphi_{\mc H_M, \mc H_N, \mb A(0)}$} (m-1-2)
		(m-2-1) edge node[above]{$\varphi_{\mc H_M, \mc H_N, \mb A(1)}$} (m-2-2)
		(m-1-1) edge node[description]{$F_{\mc H_{M \#_{\mb A(0)} N}, \mc H_{M \#_{\mb A(1)} N}}$} (m-2-1)	
		(m-1-2) edge node[description]{$\id_{\SFH(\mc H_M)} \otimes \id_{\SFH(\mc H_N)} \otimes \id_V$} (m-2-2);
\end{tikzpicture}
\end{center}

\paragraph{Case 2: $\mc W$ and $\mc U$ are boundary cobordisms.}
Then $M_1 \cong M_0 \cup -Z_W$ and $N_1 \cong N_0 \cup -Z_U$.
Given a bouquet $B(\mb A) = (d_\pm, v_\pm)$ for~$\mb A$, we define a bouquet
$B'(\mb A(0))$ for $\mb A(0)$ in the sutured manifold $(M_1, \gamma_1) \sqcup (N_1, \ni_1)$.
The arcs of $B'(\mb A(0))$ are $\eta_\pm = d_\pm(\cdot,0) \cup d_\pm(1,\cdot)$,
and the normal vector field is just the restriction of $v_\pm$.

Consider a sutured Heegaard diagram
$\mc H_{M_0}$ for
the sutured manifold $(M_0, \gamma_0)$ subordinate to the bouquet $B(\mb A(0))$, and an extension
of it to a sutured Heegaard diagram $\mc H_{M_1}$
for $(M_1, \gamma_1)$ subordinate to the bouquet $B'(\mb A(0))$
that are compatible with the contact structure $\xi_W$
as in the definition of the gluing map; cf.~\cite{gluingmap}.
Choose analogous sutured Heegaard diagrams $\mc H_{N_0}$ for $(N_0, \ni_0)$ and
$\mc H_{N_1}$ for $(N_1, \ni_1)$.
The gluing maps are then defined at the complex level as the tensor product
with contact classes, denoted by~$\mathbf x''_M$ and~$\mathbf x''_N$.

Thus, we can suppose that $\mc H_{M_0} \#_{\mb A(0)} \mc H_{N_0}$
is subordinate to the bouquet $B(\mb A(0))$ and $\mc H_{M_1} \#_{\mb A(0)} \mc H_{N_1}$
is subordinate to the bouquet $B'(\mb A(0))$. Furthermore, the sutured diagrams
$\mc H_{M_0} \#_{\mb A(0)} \mc H_{N_0}$ and $\mc H_{M_1} \#_{\mb A(0)} \mc H_{N_1}$
are contact-compatible, and that the contact class is just
$\mathbf x''_M \otimes \mathbf x''_N$. Therefore, the following diagram is commutative:
\begin{center}
\begin{tikzpicture}[description/.style={fill=white,inner sep=2pt}]
\matrix (m) [matrix of math nodes, row sep=4em,
column sep=7.5em, text height=1.7ex, text depth=0.3ex]
{\SFH(\mc H_{M_0} \#_{\mb A(0)} \mc H_{N_0}) & \SFH(\mc H_{M_0}) \otimes \SFH(\mc H_{N_0}) \otimes V \\
 \SFH(\mc H_{M_1} \#_{\mb A(0)} \mc H_{N_1}) & \SFH(\mc H_{M_1}) \otimes \SFH(\mc H_{N_1}) \otimes V. \\};
	\path[->,font=\scriptsize]
		(m-1-1) edge node[above]{$\varphi_{\mc H_{M_0}, \mc H_{N_0}, \mb A(0)}$} (m-1-2)
		(m-2-1) edge node[above]{$\varphi_{\mc H_{M_1}, \mc H_{N_1}, \mb A(0)}$} (m-2-2)
		(m-1-1) edge node[description]{$\Phi_{\xi_W \sqcup \xi_U}$} (m-2-1)	
		(m-1-2) edge node[description]{$\Phi_{\xi_W} \otimes \Phi_{\xi_U} \otimes \id_V$} (m-2-2);
\end{tikzpicture}
\end{center}

We are now left with the connected sum cobordism
$\id_{M_1} \cs_{\mb A} \id_{N_1}$, which still contains a bouquet,
induced by~$B(\mb A)$.
It follows from Case~1 that we have a commutative diagram as below,
which concludes the proof in the case of boundary cobordisms:
\begin{center}
\begin{tikzpicture}[description/.style={fill=white,inner sep=2pt}]
\matrix (m) [matrix of math nodes, row sep=4em,
column sep=7.5em, text height=1.7ex, text depth=0.3ex]
{\SFH(\mc H_{M_1} \#_{\mb A(0)} \mc H_{N_1}) & \SFH(\mc H_{M_1}) \otimes \SFH(\mc H_{N_1}) \otimes V \\
 \SFH(\mc H_{M_1} \#_{\mb A(1)} \mc H_{N_1}) & \SFH(\mc H_{M_1}) \otimes \SFH(\mc H_{N_1}) \otimes V. \\};
	\path[->,font=\scriptsize]
		(m-1-1) edge node[above]{$\varphi_{\mc H_{M_1}, \mc H_{N_1}, \mb A(0)}$} (m-1-2)
		(m-2-1) edge node[above]{$\varphi_{\mc H_{M_1}, \mc H_{N_1}, \mb A(1)}$} (m-2-2)
		(m-1-1) edge node[description]{$F_{\mc H_{M_1} \#_{\mb A(0)} \mc H_{N_1}, \mc H_{M_1} \#_{\mb A(1)} \mc H_{N_1}}$} (m-2-1)	
		(m-1-2) edge node[description]{$\id_{\SFH(\mc H_{M_1})} \otimes \id_{\SFH(\mc H_{N_1})} \otimes \id_V$} (m-2-2);
\end{tikzpicture}
\end{center}

\paragraph{Case 3: $\mc W$ and $\mc U$ are special cobordisms.}
Consider the function~$t$ on $B(\mb A)$, and extend it to a
Morse function on $\mc W \sqcup \mc U$.
This gives a decomposition of the cobordisms $\mc W$ and~$\mc U$
into 1-, 2-, and 3-handles that are attached
\emph{away from the bouquet}. Furthermore, $B(\mb A)$ is a product
with respect to this handle decomposition. In particular, $\mb A(0)$
coincides with $\mb A(1)$ and $B(\mb A)_0$ coincides with $B(\mb A)_1$
after the handle attachments.

For each handle attachment, one can choose adapted Heegaard diagrams
that are also subordinate to the bouquet $B(\mb A)_0$ for the framed
pair of points $\mb A(0)$.
We therefore obtain a commutative diagram
\begin{center}
\begin{tikzpicture}[description/.style={fill=white,inner sep=2pt}]
\matrix (m) [matrix of math nodes, row sep=4em,
column sep=7.5em, text height=1.7ex, text depth=0.3ex]
{\SFH(M_0 \#_{\mb A(0)} N_0) & \SFH(M_0) \otimes \SFH(N_0) \otimes V \\
 \SFH(M_1 \#_{\mb A(0)} M_1) & \SFH(M_1) \otimes \SFH(N_1) \otimes V. \\};
	\path[->,font=\scriptsize]
		(m-1-1) edge node[above]{$\varphi_{\mb A(0)}$} (m-1-2)
		(m-2-1) edge node[above]{$\varphi_{\mb A(0)}$} (m-2-2)
		(m-1-1) edge node[description]{$F^s_{\mc W \cs_{\mb A} \mc U}$} (m-2-1)	
		(m-1-2) edge node[description]{$F^s_{\mc W} \otimes F^s_{\mc U} \otimes \id_V$} (m-2-2);
\end{tikzpicture}
\end{center}
This concludes the proof of Theorem~\ref{thm:cs}.
\end{proof}

\section{A $(1+1)$-dimensional TQFT}
\label{sec:TQFT}
The aim of this section is to determine the $(1+1)$-dimensional
TQFT defined by the cobordism maps computed in Section~\ref{sec:computations}.

\subsection{The category of marked embedded cobordisms and the $\HFLh$ TQFT}
\label{sec:HFLTQFT}

We first define the cobordism category that we are interested in.

\begin{definition}
We define the category of \emph{marked decorated embedded cobordisms} $\EmbCob$ as follows.
The objects are the standard $n$-component unlinks $(U_n, P_n)$ for every $n \in \mb N$
with two decorations on each component. As Kronheimer and Mrowka suggest
in \cite[Section~8.2]{detection}, we can take a specific
model for it: We define $U_n$ to be the union of standard
circles $L_1, \dots, L_n$ in the $(x,y)$-plane, each of diameter $0.5$,
centered at the first $n$ integer lattice points along the
$x$-axis, and such that $R_-(P_n) \cap L_i = L_i \cap \{x \leq i\}$.
Furthermore, we suppose that the component~$L_1$ of the unlink $U_n$
is marked (in addition to being decorated),
and denote it by a dot on the component itself.

The morphisms are all cobordisms in $S^3 \times I$ generated by
\begin{itemize}
\item{$\CobV^e_n$ ($n \geq 1$), the cobordism from $(U_n, P_n)$ to $(U_{n+1}, P_{n+1})$
obtained by stacking horizontally the cobordism $\CobV$ in Figure~\ref{fig:defcobordisms}
and the identity cobordism of $(U_{n-1}, P_{n-1})$. Note that the marked component
is in the $\CobV$ part of the cobordism.}
\item{$\CobLambda^e_n$ ($n \geq 2$), the cobordism from $(U_n, P_n)$ to $(U_{n-1}, P_{n-1})$
obtained by stacking horizontally the cobordism $\CobLambda$ in Figure~\ref{fig:defcobordisms}
and the identity cobordism of $(U_{n-2}, P_{n-2})$. Note that the marked component
is in the $\CobLambda$ part of the cobordism.}
\item{$\IX^e_{i,n}$ (for $2 \leq i \leq n-1$ and $n \geq 3$), obtained by swapping the
$i$-th and the $(i+1)$-th component of $(U_n, P_n)$. Note that the marked component
is in the first component of the cobordism, and that therefore the marked component
cannot be swapped.}
\item{$\IV^e_n$ ($n \geq 2$), the cobordism from $(U_n, P_n)$ to $(U_{n+1}, P_{n+1})$
obtained by stacking horizontally an identity cobordism $\CobI$ between the marked components,
the cobordism $\CobV$ and, finally, identity cobordisms on the last $n-2$ components.}
\item{$\ILambda^e_n$ ($n \geq 3$), the cobordism from $(U_n, P_n)$ to $(U_{n-1}, P_{n-1})$
obtained by stacking horizontally an identity cobordism $\CobI$ between the marked components,
the cobordism $\CobLambda$ and, finally, identity cobordisms on the last $n-3$ components.}
\end{itemize}
The above cobordisms are shown in Figure~\ref{fig:HFLTQFT}.
\end{definition}

Our aim is to describe the TQFT
\[
\HFLh \colon \EmbCob \xrightarrow{\qquad} \Vect_{\F_2}.
\]

\begin{figure}
\resizebox{0.65\textwidth}{!}{
\input{HFLTQFT.tex}
}
\caption{The figure shows the cobordisms $\V^e_1$, $\Lambda^e_2$,
$\IX^e_{2,3}$, $\IV^e_2$ and $\ILambda^e_3$ in the category $\EmbCob$
(represented following the conventions explained in Figure~\ref{fig:handles}),
and the maps they induce via the TQFT $\HFLh$. Note that the
marked component allows us to choose a canonical identification
$\psi_n \colon \HFLh(U_n, P_n) \to V^{\otimes (n-1)}$.}
\label{fig:HFLTQFT}
\end{figure}

To do so, we need to deal with some technicalities due to the fact
that link Floer homology does not naturally arise as an invariant
of a marked link. For this reason, we now explain how marking a
component of $(U_n, P_n)$ gives an isomorphism
\begin{equation}
\label{eq:psiUn}
\psi_n \colon \HFLh(U_n, P_n)  \xrightarrow{\quad \simeq \quad} V^{\otimes (n-1)}.
\end{equation}

As usual, let $V$ be the $\F_2$ vector space of dimension~$2$, generated
by two homogeneous elements $T$ (top-graded) and $B$
(bottom-graded). We now construct a canonical
isomorphism
\[
\HFLh(U_n) \cong V^{\otimes (n-1)},
\]
where each factor is
associated to an unmarked component of $U_n$.
Note that a sutured Heegaard diagram for the unknot in $S^3$
is given by $(A, \emptyset, \emptyset)$, where $A$ is an annulus.
We can construct a (sutured) Heegaard diagram~$\mc H_n$ for $(U_n, P_n)$
by taking $n$ copies of this Heegaard diagram (each one associated
to a link component), and by connecting the first annulus (i.e., the
one that corresponds to the marked component) to all the other
annuli with connected sum tubes. Each tube contains a homotopically non-trivial
$\alpha$-curve and a homotopically non-trivial $\beta$-curve
that intersect in two points, which we call $B$ and $T$
(for bottom-graded and top-graded).
Then we have an isomorphism
\[
\psi_n \colon \HFLh(U_n, P_n) \xrightarrow{\quad \simeq \quad} V_2 \otimes \ldots \otimes V_{n},
\]
where by $V_i$ we mean the vector space $V$ associated to the tube
connecting the first component to the $i$-th component.

Having set the isomorphism in equation~\eqref{eq:psiUn}, our next aim
is to understand the cobordism maps induced by the generators of $\EmbCob$
with respect to the standard basis consisting of elements of the form
\[
v_2 \otimes \dots \otimes v_n,
\]
where $v_i \in \{B,T\}$ for every $i \in \{2,\dots,n\}$.

\subsubsection{The cobordism $\V^e_n$}
\label{sec:CobV}
This cobordism is the disjoint union of the cobordism $\CobV$ and
the identity cobordism $\CobI^{\otimes (n-1)} = \id_{U_{n-1}}$.
Consider a framed pair of arcs $\mb A$ for $\CobV$ and $\id_{U_{n-1}}$
such that they are close to the surfaces,
and their endpoints are close to the marked components of the links.
Then, by Corollary~\ref{cor:canonicalcs}, there are isomorphisms
\[
\begin{split}
\varphi_n \colon \HFLh(U_n, P_n) \to \mb F_2 \otimes V^{\otimes(n-2)} \otimes V, \text{ and}\\
\varphi_{n+1} \colon \HFLh(U_{n+1}, P_{n+1}) \to V \otimes V^{\otimes(n-2)} \otimes V
\end{split}
\]
such that the cobordism map can be expressed with respect to these isomorphisms
as in Theorem~\ref{thm:cs}. However, $\varphi_n$ and $\varphi_{n+1}$ are not the
isomorphisms~$\psi_n$ and~$\psi_{n+1}$
given in equation~\eqref{eq:psiUn} for decorated unlinks with a marked component.

We define the isomorphism $\varphi_n$ using the sutured diagram $\mc H_n' := \mc H_1 \#_{\mb A(0)} \mc H_{n-1}$,
where $\mb A(0)$ has one point in $\mc H_1$ and one point in the first
$(A,\emptyset,\emptyset)$ summand of $H_{n-1}$. The tube attached along $\mb A(0)$ corresponds to the last $V$
factor of $\mb F_2 \otimes V^{\otimes(n-2)} \otimes V$.
Similarly, we define $\varphi_{n+1}$ using the diagram $\mc H_{n+1}' := \mc H_2 \#_{\mb A(1)} \mc H_{n-1}$,
where $\mb A(1)$ has one point in the first summand of $\mc H_2$ and one point in the first summand
of $\mc H_{n-1}$. On the other hand, $\psi_n$ is defined using $\mc H_n$, and $\psi_{n+1}$
is defined using $\mc H_{n+1}$. We obtain $\mc H_n'$ from $\mc H_n$ by sliding the feet
of all tubes in the first summand over the tube attached along $\mb A(0)$. This gives rise
to the naturality map $F_{\mc H_n, \mc H_n'}$. We obtain $\mc H_{n+1}'$ from $\mc H_{n+1}$
in a similar way, except we do not slide the tube~$A_1$ connecting the first two summands of $\mc H_{n+1}$
(the tube attached along $\mb A(1)$ connects the first and third summands).
In particular, the map $F_{\mc H_{n+1},\mc H_{n+1}'}$ acts on the intersection points in $A_1$
via the identity, and on the other intersection points via $F_{\mc H_n, \mc H_n'}$.
This can be seen by observing that the triangle maps involved in $F_{\mc H_{n+1},\mc H_{n+1}'}$
have only small triangles in~$A_1$. Alternatively, one can apply Lemma~\ref{lem:naturalitycs},
by viewing $U_{n+1}$ as $U_1 \sqcup U_n$, where $U_1$ is the second component of $U_{n+1}$ and $U_n$ is the rest,
and $\mb P$ connects the first and second components:
\[
  \xymatrixcolsep{5pc}\xymatrix{\SFH(\mc H_{n+1}) \ar[r]^-{\varphi_{\mc H_1, \mc H_n, \P}} \ar[d]^{F_{\mc H_{n+1}, \mc H_{n+1}'}} &
    \SFH(\mc H_1) \otimes \SFH(\mc H_n) \otimes V \ar[d]^{\id_{\mb F_2} \otimes F_{\mc H_n, \mc H_n'} \otimes \id_V}
    \ar[r]^-{\tau \circ (\psi_n \otimes \id_V)} & V \otimes V^{\otimes (n-1)} \ar[d]^{\id_V \otimes f_n}\\
    \SFH(\mc H_{n+1}')\ar[r]^-{\varphi_{\mc H_1, \mc H_n', \P}} &
    \SFH(\mc H_1) \otimes \SFH(\mc H_n') \otimes V \ar[r]^-{\tau \circ (\varphi_n \otimes \id_V)}&
    V \otimes V^{\otimes (n-2)} \otimes V,}
\]
where $f_n = \varphi_n \circ F_{\mc H_n, \mc H_n'} \circ \psi_n^{-1}$ and $\tau(x \otimes y) = y \otimes x$
for $x \in V^{\otimes (n-1)}$ and $y \in V$.
Furthermore, $\psi_{n+1} =  \tau \circ (\psi_n \otimes \id_V) \circ \varphi_{\mc H_1, \mc H_n, \P}$ and
$\varphi_{n+1} = \tau \circ (\varphi_n \otimes \id_V) \circ \varphi_{\mc H_1, \mc H_n', \P}$,
hence we obtain that
\[
\varphi_{n+1} \circ F_{\mc H_{n+1}, \mc H_{n+1}'} \circ \psi_{n+1}^{-1} = \id_V \otimes f_n.
\]

If we consider the commutative diagram
\begin{center}
\begin{tikzpicture}[description/.style={fill=white,inner sep=2pt}]
\matrix (m) [matrix of math nodes, row sep=3em,
column sep=5em, text height=1.7ex, text depth=0.3ex]
{\HFLh(\mc H_n) & \F_2 \otimes V^{\otimes(n-1)} \\
 \HFLh(\mc H_n') & \F_2 \otimes \left( V^{\otimes(n-2)} \otimes V \right)\\
 \HFLh(\mc H_{n+1}') & V \otimes \left( V^{\otimes(n-2)} \otimes V \right)\\
 \HFLh(\mc H_{n+1}) & V \otimes V^{\otimes(n-1)}, \\};
	\path[->,font=\scriptsize]
		(m-1-1) edge node[above]{$\psi_n$} (m-1-2)
		(m-2-1) edge node[above]{$\varphi_n$} (m-2-2)
		(m-3-1) edge node[above]{$\varphi_{n+1}$} (m-3-2)
		(m-4-1) edge node[above]{$\psi_{n+1}$} (m-4-2)
		(m-1-1) edge node[right]{$F_{\mc H_n, \mc H_n'}$} (m-2-1)	
		(m-2-1) edge node[right]{$ F_{\V^e_n}$} (m-3-1)	
		(m-3-1) edge node[right]{$F_{\mc H_{n+1}', \mc H_{n+1}}$} (m-4-1)	
		(m-1-2) edge node[description]{$\id_{\mb F_2} \otimes  f_n$} (m-2-2)
		(m-2-2) edge node[description]{$ F_{\V} \otimes \id_{V^{\otimes(n-2)}} \otimes \id_V$} (m-3-2)
		(m-3-2) edge node[description]{$\id_{V} \otimes  f_n^{-1}$} (m-4-2);
\end{tikzpicture}
\end{center}
then we see that the map $ F_{\V^e_n}$ with respect to the isomorphisms
$\psi_n$ and $\psi_{n+1}$ is
\begin{IEEEeqnarray*}{rccc}
 F_{\V^e_n}: & \,\F_2 \otimes V^{\otimes(n-1)} \, & \xrightarrow{\qquad} & \, V \otimes V^{\otimes(n-1)}. \\
& \,1 \otimes *\, & \xmapsto{\qquad} & \,B \otimes *
\end{IEEEeqnarray*}

\subsubsection{The cobordism $\Lambda^e_n$}
\label{sec:CobLambda}

This cobordism is the disjoint union of the merge cobordism~$\Lambda$ as in
Figure~\ref{fig:defcobordisms}, from the first two components of~$U_n$
to the first component of~$U_{n-1}$, and the identity cobordism on the
last $n-2$ components. By arguing as in Section~\ref{sec:CobV},
we see that the map $ F_{\Lambda^e_n}$ with respect to the isomorphisms
$\psi_n$ and $\psi_{n-1}$ is
\begin{IEEEeqnarray*}{rccc}
 F_{\Lambda^e_n}: & \,V \otimes V^{\otimes(n-2)} \, & \xrightarrow{\qquad} & \, \F_2 \otimes V^{\otimes(n-2)}. \\
& \,T \otimes *\, & \xmapsto{\qquad} & \,1 \otimes * \\
& \,B \otimes *\, & \xmapsto{\qquad} & \,0 \\
\end{IEEEeqnarray*}

\subsubsection{The cobordism $\IX^e_{i,n}$}

The effect of the cobordism $\IX^e_{i,n}$ on the Heegaard diagram
that induces the isomorphism $\psi_n$ in equation~\eqref{eq:psiUn}
is a swap between the two (unmarked) components $i$ and $i+1$.
The map on $\HFLh$ is therefore
\begin{IEEEeqnarray*}{rccc}
 F_{\IX^e_{i,n}} \colon & \,V^{\otimes(i-2)} \otimes V \otimes V \otimes V^{\otimes(n-i-1)} \, & \xrightarrow{\qquad}
& \,V^{\otimes(i-2)} \otimes V \otimes V \otimes V^{\otimes(n-i-1)}. \, \\
& \,* \otimes a \otimes b \otimes *\, & \xmapsto{\qquad} & \,* \otimes b \otimes a \otimes *\, \\
\end{IEEEeqnarray*}

\subsubsection{The cobordism $\IV^e_n$}
\label{sec:CobIV}

We now turn our attention to the cobordism $\IV^e_n$.
By Theorem~\ref{thm:cs}, and arguing as in Section~\ref{sec:CobV},
we can restrict to the case $n=2$. Then we can split the
cobordism $\IV^e_2$ as the disjoint union of the identity
cobordism~$\CobI$ and the split cobordism~$\CobV$ from Figure~\ref{fig:defcobordisms}.
By Theorem~\ref{thm:cs}, we have isomorphisms
\[
\begin{split}
\varphi_2 \colon \HFLh(U_2, P_{U_2}) &\to \F_2 \otimes \F_2 \otimes V, \text{ and} \\
\varphi_3 \colon \HFLh(U_3, P_{U_3}) &\to \F_2 \otimes V \otimes V,
\end{split}
\]
such that the map $ F_{\IV^e_2}$ under these identifications is as follows:
\begin{IEEEeqnarray*}{rccc}
 F_{\IV^e_{2}} \colon & \,\F_2 \otimes \F_2 \otimes V \, & \xrightarrow{\qquad} & \,\F_2 \otimes V \otimes V. \, \\
& \,1 \otimes 1 \otimes T\, & \xmapsto{\qquad} & \,1 \otimes B \otimes T\, \\
& \,1 \otimes 1 \otimes B\, & \xmapsto{\qquad} & \,1 \otimes B \otimes B\, \\
\end{IEEEeqnarray*}

However, we are interested in writing the map with respect to the identifications
$\psi_2$ and $\psi_3$ in equation~\eqref{eq:psiUn}. Notice that the map
\[
\varphi_2 \circ \psi_2^{-1} \colon V \to \F_2 \otimes \F_2 \otimes V
\]
is clear because it preserves the relative grading. We can therefore identify
\[
\HFLh(U_2, P_{U_2}) \cong V \cong \F_2 \otimes \F_2 \otimes V.
\]
We need to understand $\psi_3 \circ  F_{\IV^e_2}(T)$ and $\psi_3 \circ  F_{\IV^e_2}(B)$.
Notice that $\varphi_3 \circ F_{\IV^e_2}(B)$ is the unique bottom-graded
element of $\F_2 \otimes V \otimes V$, therefore we deduce that
\[
\psi_3 \circ F_{\IV^e_2}(B) = B \otimes B \in V \otimes V.
\]
On the other hand, $T$ is mapped to one of the three middle-graded
homogeneous elements of $\F_2 \otimes V \otimes V$, so $\psi_3 \circ F_{\IV^e_2} (T)$
is also one of the middle-graded homogeneous elements: $T \otimes B$, $B \otimes T$,
or $T \otimes B + B \otimes T$.
If we consider the composition of cobordisms $\IX^e_{2,3} \circ \IV^e_2$,
we obtain the cobordism $\IV^e_2$ \emph{with different decorations}, which, by
Remark~\ref{rem:deco}, induces the same map as $\IV^e_2$ on $\HFLh$.
Therefore, the map $\psi_3 \circ F_{\IV^e_2}$ is invariant
under the swap. In the end, the map associated to $ F_{\IV^e_n}$ with respect
to the identifications in equation~\eqref{eq:psiUn} is
\begin{IEEEeqnarray*}{rccc}
 F_{\IV^e_{n}} \colon & \, V \otimes V^{\otimes(n-2)} \, & \xrightarrow{\qquad}
& \,V^{\otimes 2} \otimes V^{\otimes(n-2)}. \, \\
& \, T \otimes *\, & \xmapsto{\qquad} & \,(T \otimes B + B \otimes T) \otimes *\, \\
& \, B \otimes *\, & \xmapsto{\qquad} & \,(B \otimes B) \otimes *\, \\
\end{IEEEeqnarray*}

\subsubsection{The cobordism $\ILambda^e_n$}
\label{sec:CobILambda}

The last case that we have to study is the cobordism~$\ILambda^e_n$.
As in Section~\ref{sec:CobIV}, we can restrict our attention to $\ILambda^e_3$.
By Theorem~\ref{thm:cs}, we have isomorphisms $\varphi_2$ and $\varphi_3$
as in Section~\ref{sec:CobIV} such that the map $ F_{\ILambda^e_3}$ under
these identifications is as follows:
\begin{IEEEeqnarray*}{rccc}
 F_{\ILambda^e_{3}} \colon & \,\F_2 \otimes V \otimes V \, & \xrightarrow{\qquad} & \,\F_2 \otimes \F_2 \otimes V. \, \\
& \,1 \otimes T \otimes T\, & \xmapsto{\qquad} & \,1 \otimes 1 \otimes T\, \\
& \,1 \otimes T \otimes B\, & \xmapsto{\qquad} & \,1 \otimes 1 \otimes B\, \\
& \,1 \otimes B \otimes T\, & \xmapsto{\qquad} & \,0\, \\
& \,1 \otimes B \otimes B\, & \xmapsto{\qquad} & \,0\, \\
\end{IEEEeqnarray*}
As in Section~\ref{sec:CobIV}, we need to understand this map with respect
to the isomorphisms~$\psi_2$ and~$\psi_3$. By the fact that
$\ILambda^e_3 \circ \IX^e_{2,3}$ is the cobordism $\ILambda^e_3$ (with
different decorations) and by Remark~\ref{rem:deco}, we know that
the map $ F_{\ILambda^e_3}$ with respect to~$\psi_2$ and~$\psi_3$ is
invariant under swaps.
Since we know that there is a unique middle-graded homogeneous element
of $V^{\otimes 2}$ that is in the kernel of $ F_{\ILambda^e_{3}}$, and
since $\ker( F_{\ILambda^e_{3}})$ is invariant under swaps, we deduce that
this element is $T \otimes B + B \otimes T$.
Therefore, the map $ F_{\ILambda^e_n}$ with respect to the identifications
$\psi_n$ and $\psi_{n-1}$ is given by
\begin{IEEEeqnarray*}{rccc}
 F_{\ILambda^e_{n}} \colon & \,V^{\otimes 2} \otimes V^{\otimes (n-3)} \, & \xrightarrow{\qquad} & \,V \otimes V^{\otimes (n-3)}. \, \\
& \,(T \otimes T) \otimes *\, & \xmapsto{\qquad} & \,T \otimes *\, \\
& \,(T \otimes B) \otimes *\, & \xmapsto{\qquad} & \,B \otimes *\, \\
& \,(B \otimes T) \otimes *\, & \xmapsto{\qquad} & \,B \otimes *\, \\
& \,(B \otimes B) \otimes *\, & \xmapsto{\qquad} & \,0\, \\
\end{IEEEeqnarray*}

\begin{remark}
The results above completely determine the TQFT $\HFLh$. This
is expressed in a concise form in Figure~\ref{fig:HFLTQFT}.
\end{remark}

\subsection{The category of marked abstract cobordisms and the reduced Khovanov TQFT}

We now compare the TQFT $\HFLh$ from Section~\ref{sec:HFLTQFT} with another TQFT
from a category of marked cobordisms, namely the reduced Khovanov TQFT, which gives
rise to the reduced Khovanov homology of marked links.
We start with the following definition.

\begin{definition}
We define the category of \emph{marked abstract cobordisms} $\AbsCob$ as follows.
The objects are closed 1-manifolds with a marked component, which we will denote
by a dot on the component itself. Let $U_n$ be a particular closed 1-manifold
with $n$ components; we will suppose that each component is labeled by a number
between $1$ and $n$, and that the marked component is the first one. Notice that
any object of $\AbsCob$ is diffeomorphic to some $U_n$.

The morphisms are all abstract cobordisms between any two closed 1-manifolds.
These are generated by:
\begin{itemize}
\item{$\CobV^a_n$ ($n \geq 1$), the cobordism from $U_n$ to $U_{n+1}$ that consists
of a pair-of-pants between the first component of $U_n$ and the first two components of
$U_{n+1}$, and, for each $i > 1$, of a cylinder between the $i$-th component of $U_n$
and the $(i+1)$-th component of $U_{n+1}$. Notice that the marked components of $U_n$
and $U_{n+1}$ are both contained in the pair-of-pants.}
\item{$\CobLambda^a_n$ ($n \geq 2$), the cobordism from $U_{n}$ to $U_{n-1}$ that consists
of a pair-of-pants between the first two components of $U_n$ and the first component of
$U_{n-1}$, and, for each $i > 2$, of a cylinder between the $i$-th component of $U_n$
and the $(i-1)$-th component of $U_{n-1}$. Notice that the marked components of $U_n$
and $U_{n-1}$ are both contained in the pair-of-pants.}
\item{$\IX^a_{i,n}$ (for $2 \leq i \leq n-1$ and $n \geq 3$), obtained by swapping the
$i$-th and the $(i+1)$-th component of $U_n$. Notice that the marked components
are on the first $\CobI$ component of the cobordism, and that therefore the marked
component cannot be swapped.}
\item{$\IV^a_n$ ($n \geq 2$), the cobordism from $U_n$ to $U_{n+1}$ that consists
of a pair-of-pants between the second component of $U_n$ and the second and the third
components of $U_{n+1}$, and of cylinders between all the other components.
The marked components of $U_n$ and $U_{n+1}$ are connected by a cylinder.}
\item{$\ILambda^a_n$ ($n \geq 3$), the cobordism from $U_n$ to $U_{n-1}$ that consists
of a pair-of-pants between the second and the third components of $U_n$ and the second
component of $U_{n-1}$, and of cylinders between all the other components.
The marked components of $U_n$ and $U_{n-1}$ are connected by a cylinder.}
\end{itemize}
The above cobordisms are represented in Figure~\ref{fig:redKhTQFT}.
\end{definition}

\begin{figure}
\resizebox{0.7\textwidth}{!}{
\input{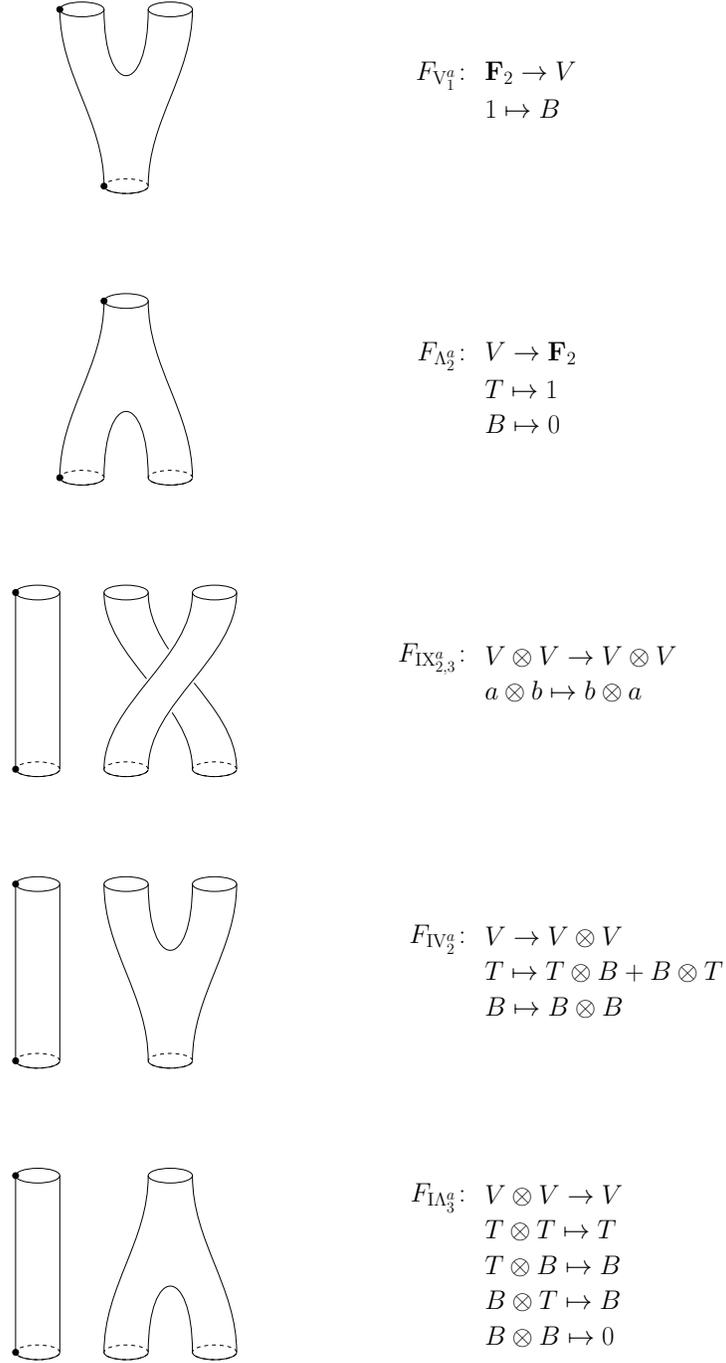}
}
\caption{The figure shows the cobordisms $\V^a_1$, $\Lambda^a_2$,
$\IX^a_{2,3}$, $\IV^a_2$, and $\ILambda^a_3$ in the category $\AbsCob$
and the maps they induce via the reduced Khovanov TQFT $\td\Kh$.
Note that the marked component gives a canonical identification
$\td\Kh(U_n) = V^{\otimes (n-1)}$.}
\label{fig:redKhTQFT}
\end{figure}

\begin{definition}
Let
\[
\Ob \colon \EmbCob \to \AbsCob,
\]
be the forgetful functor such that on the objects $\Ob(U_n, P_n) = U_n$,
and on the cobordisms it simply forgets all the decorations and the embedding.
\end{definition}

\begin{definition}
The reduced Khovanov TQFT is the functor
\[
\td\Kh \colon \AbsCob \to \Vect_{\F_2}
\]
such that on the objects $\td\Kh(U_n) = V^{\otimes (n-1)}$,
where each $V$ factor should be thought of as associated to
an unmarked component, and on the morphisms it is defined
as in Figure~\ref{fig:redKhTQFT}.
\end{definition}

The main result of this section is the following theorem.

\begin{theorem}
\label{thm:triangle}
The following triangle of functors is commutative:
\begin{center}
\begin{tikzpicture}[description/.style={fill=white,inner sep=2pt}]
\matrix (m) [matrix of math nodes, row sep=3em,
column sep=3em, text height=1.7ex, text depth=0.3ex]
{\EmbCob & & \Vect_{\F_2} \\
  & \AbsCob & \\};
	\path[->,font=\scriptsize]
		(m-1-1) edge node[above]{$\HFLh$} (m-1-3)
		(m-1-1) edge node[description]{$\Ob$} (m-2-2)
		(m-2-2) edge node[description]{$\td\Kh$} (m-1-3);
\end{tikzpicture}
\end{center}
\end{theorem}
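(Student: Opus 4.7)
The plan is to show that the two functors $\HFL$ and $\td\Kh \circ \Ob$, both defined on $\EmbCob$, agree on objects and on a generating family of morphisms; since both preserve composition, agreement on generators is enough. By definition, $\EmbCob$ is generated (as a category, under composition) by the five families $\V^e_n$, $\Lambda^e_n$, $\IX^e_{i,n}$, $\IV^e_n$, $\ILambda^e_n$, and $\Ob$ sends these to the corresponding generators $\V^a_n$, $\Lambda^a_n$, $\IX^a_{i,n}$, $\IV^a_n$, $\ILambda^a_n$ of $\AbsCob$. So the task reduces to comparing two short lists of linear maps between tensor powers of~$V$.

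First I would record agreement on objects. The canonical isomorphism $\psi_{U_n}$ from equation~\eqref{eq:psiUn}, built from the standard multi-tube Heegaard diagram anchored at the marked component, identifies $\HFL(U_n,P_{U_n})$ with $V^{\otimes(n-1)}$, where each factor corresponds to one unmarked component. This matches the definition $\td\Kh(U_n) = V^{\otimes(n-1)}$ exactly, and the factors are canonically indexed by the same unmarked components on both sides, so the identification is functorial in the swap maps $\IX^e_{i,n}$.

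Next I would do the morphism comparison. The five maps $\td\Kh(\V^a_n)$, $\td\Kh(\Lambda^a_n)$, $\td\Kh(\IX^a_{i,n})$, $\td\Kh(\IV^a_n)$, $\td\Kh(\ILambda^a_n)$ are read off from Figure~\ref{fig:redKhTQFT}. The corresponding maps $F_{\V^e_n}$, $F_{\Lambda^e_n}$, $F_{\IX^e_{i,n}}$, $F_{\IV^e_n}$, $F_{\ILambda^e_n}$ were computed in Subsections~\ref{sec:CobV}--\ref{sec:CobILambda} and summarized in Figure~\ref{fig:HFLTQFT}. A side-by-side comparison shows the two figures are literally the same, which yields the equality on generators and hence the desired commutativity.

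The only genuinely delicate point, which has already been addressed, concerns the pair $\IV^e_n$ and $\ILambda^e_n$. For these, the connected-sum formula of Theorem~\ref{thm:cs} produces splittings $\varphi_{U_n}, \varphi_{U_{n+1}}$ that differ from the canonical $\psi_{U_n}, \psi_{U_{n+1}}$, and in the middle Alexander/Maslov grading there is a two-dimensional ambiguity. This was resolved by invoking Remarks~\ref{rem:FVgen} and~\ref{rem:FLambdagen}: precomposing (resp.\ postcomposing) with the swap $\IX^e_{2,3}$ reproduces the same cobordism with different decorations but the same induced map, forcing the image (resp.\ the kernel) of the middle-graded part to be symmetric under the $V \otimes V$ swap, and hence equal to $T\otimes B + B\otimes T$. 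Given this, no further work is needed, and the commutativity of the triangle follows.
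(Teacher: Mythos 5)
Your proposal is correct and follows essentially the same route as the paper: agreement on objects via the identification $\psi_{U_n}$, reduction to the generating cobordisms, and a direct comparison of the maps in Figures~\ref{fig:HFLTQFT} and~\ref{fig:redKhTQFT}, with the middle-graded ambiguity for $\IV^e_n$ and $\ILambda^e_n$ resolved by the swap-symmetry argument of Remarks~\ref{rem:FVgen} and~\ref{rem:FLambdagen}, exactly as in Sections~\ref{sec:CobIV} and~\ref{sec:CobILambda}. The only slip is cosmetic: for $\IV^e_2$ one \emph{post}composes with $\IX^e_{2,3}$ and for $\ILambda^e_3$ one \emph{pre}composes (your ``resp.'' is reversed), which does not affect the argument.
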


\begin{proof}
By definition,
\[
\HFLh(U_n, P_n) = V^{\otimes (n-1)} = \td\Kh(U_n) = \td\Kh\left(\Ob\left(U_n, P_n\right)\right).
\]
On the morphisms, it suffices to check the commutativity of the triangle
for the generators. This is
achieved by comparing Figures~\ref{fig:HFLTQFT} and~\ref{fig:redKhTQFT}.
\end{proof}

As a consequence of Theorem~\ref{thm:triangle}, one can compute the reduced
Khovanov homology of~$L$ using the TQFT $\HFLh$. See
Section~\ref{sec:ss} for the definition of \emph{cube of resolutions}.

\begin{corollary}
\label{cor:TQFT}
The reduced Khovanov homology (with $\F_2$ coefficients) of a marked link
$L$ can be computed by applying the TQFT $\HFLh$ to a cube of resolutions
of $L$.
\end{corollary}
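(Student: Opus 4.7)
The plan is to deduce Corollary~\ref{cor:TQFT} as a direct consequence of Theorem~\ref{thm:triangle}, once we verify that the cube of resolutions of a marked link diagram actually fits into the category $\EmbCob$, so that the two TQFTs have something to compare on it. Given a marked link diagram $D$ for $L$ with $n$ crossings, each of the $2^n$ complete resolutions is an unlink in the plane, and each edge of the cube is a pair-of-pants cobordism changing a single $0$-resolution to a $1$-resolution at one crossing. Reduced Khovanov homology is by construction the homology of the complex obtained by applying $\td\Kh \colon \AbsCob \to \Vect_{\F_2}$ to this cube (with signs trivial in characteristic $2$), after identifying each vertex with the standard unlink $U_n$ and each edge with a generator of $\AbsCob$.

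First I would lift the cube of resolutions to a diagram in $\EmbCob$. For each vertex $v$, the resolution $D_v$ is an unlink embedded in $S^3$, with the marking inherited from the marked component of $D$; I would choose an ambient isotopy $\iota_v$ taking $D_v$ to the standard marked decorated unlink $(U_{n_v},P_{U_{n_v}})$, decorated canonically so that the isotopy is a morphism of $\EmbCob$. Each edge of the cube is realized by a saddle cobordism between two adjacent resolutions, which, after conjugating by the isotopies $\iota_v$ and inserting trivial decorations (two decorations on each component, respecting Definition~\ref{def:dlc}), becomes a decorated link cobordism between standard marked unlinks. By Proposition~\ref{prop:generation} together with the disjoint union formulation, each such cobordism is equivalent in $\EmbCob$ to a composition of one of the generators $\V^e_{n}$ or $\Lambda^e_{n}$ (or $\IV^e_n$, $\ILambda^e_n$) with swap generators $\IX^e_{i,n}$, so the entire cube lives in $\EmbCob$.

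Next I would apply the forgetful functor $\Ob$ and observe that $\Ob$ sends the lifted cube back to the original cube in $\AbsCob$ underlying the reduced Khovanov complex: on objects this is immediate, and on edges it simply drops the decorations, embedding, and isotopies $\iota_v$, which are automatically trivial in $\AbsCob$ up to the standard identification. Then Theorem~\ref{thm:triangle} gives $\hat\HFL = \td\Kh \circ \Ob$ on morphisms, so applying $\hat\HFL$ to the lifted cube produces exactly the same chain complex as applying $\td\Kh$ to the Khovanov cube, and taking homology yields $\rKh(L;\F_2)$.

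The one delicate point I expect is verifying that the specific saddles that appear in the cube of resolutions, with their inherited decorations coming from the marked link diagram and from our choice of isotopies $\iota_v$, really do coincide (up to $\EmbCob$-equivalence, and hence induce the same maps by Remarks~\ref{rem:FVgen} and~\ref{rem:FLambdagen}) with the generators $\V^e_n$, $\Lambda^e_n$, $\IV^e_n$, $\ILambda^e_n$ whose maps were computed in Section~\ref{sec:HFLTQFT}. This amounts to checking that the decorations can be chosen so that each saddle has the appropriate one-sided inverse (a death or birth) in $\EmbCob$ and that the marked component is threaded through the cobordism exactly as in the corresponding generator; once this is arranged, commutativity of the resulting chain complex with the Khovanov cube is purely formal from Theorem~\ref{thm:triangle}.
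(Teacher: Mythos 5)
Your proposal is correct and takes essentially the same route as the paper: Corollary~\ref{cor:TQFT} is deduced directly from the commutative triangle of Theorem~\ref{thm:triangle}, with the identification of the cube of resolutions as a diagram in $\EmbCob$ (standard marked unlinks at the vertices, standardly embedded merge/split saddles written as compositions of $\V^e_n$, $\Lambda^e_n$, $\IV^e_n$, $\ILambda^e_n$ and swaps $\IX^e_{i,n}$) left implicit in the paper. Your ``delicate point'' about decorations and one-sided inverses is exactly what Remarks~\ref{rem:FVgen} and~\ref{rem:FLambdagen} and the Section~\ref{sec:TQFT} computations are there to handle, so nothing essential is missing.
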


\begin{remark}
Let $\underline{\EmbCob}$ denote the category defined as $\EmbCob$,
but without any decorations on links and surfaces, and for
only for links in~$\R^3$ and surfaces in $\R^3 \times I$.
Jacobsson~\cite{Jacobsson} and Bar-Natan~\cite{bar2005khovanov} defined cobordism maps induced
on Khovanov homology by oriented link cobordisms in~$\R^3 \times I$.
When restricted to $\underline{\EmbCob}$, we recover the reduced Khovanov TQFT.
\end{remark}

Rasmussen~\cite{rasmussen2005knot} conjectured that there exists
a spectral sequence from the totally reduced Khovanov homology of a link
with one marked point on each link component
to its knot Floer homology. This motivates the following conjecture.

\begin{conjecture}
Let $(L_0,P_0)$ and $(L_1,P_1)$ be links with two decorations on each link component.
Given a (suitably) decorated link cobordism $\mc S$ from $L_0$ to $L_1$,
it functorially induces a morphism $\td\Kh(\mc S) \colon \td\Kh(L_0,P_0) \to \td\Kh(L_1,P_1)$,
where $\td\Kh(L_i,P_i)$ is the totally reduced Khovanov homology of the link~$L_i$
marked with the $z$ decorations in~$P_i$.
Furthermore, assuming Rasmussen's conjecture above, there is a morphism of
spectral sequences that induces $\td\Kh(\mc S)$ on the $E^2$ page and
$F_{\mc S} \colon \hat\HFK(L_0,P_0) \to \hat\HFK(L_1,P_1)$ on the $E^\infty$ page,
where we identify $\hat\HFK(L_i,P_i)$ with $\hat\HFL(L_i,P_i)$ via
the isomorphisms $\Phi_{L_i,P_i}$ defined in Section~\ref{sec:skein}.
\end{conjecture} 

\section{A spectral sequence from Khovanov homology}
\label{sec:ss}
In this last section, we note that one can define a spectral sequence
from Khovanov homology (or reduced Khovanov homology) by using the
cobordism maps as higher differentials. We prove that
the spectral sequence is invariant under Reidemeister moves and that
it is therefore a link invariant up to isomorphism.

This spectral sequence has also been studied independently by Baldwin, Hedden, and
Lobb~\cite{BHL}. They use the machinery of ``Khovanov-Floer theories''
to prove invariance. Their work also implies that the spectral sequence is functorial
with respect to link cobordisms in $\R^3 \times I$.

We first recall the definition of a cube of resolutions for a link diagram.

\begin{definition}
\label{def:resolution}
Let $L$ be a marked link in $\R^3$, and let~$D$ be a marked diagram of~$L$.
A \emph{resolution} of~$D$ is a map~$u$ from the set of crossings of~$D$
to $\left\{0,1\right\}$.
If $u$ is a resolution, then we denote by~$L_u$ the marked unlink
obtained by smoothing each crossing of~$D$ according to the conventions
in Figure~\ref{fig:smoothings}, which we also call \emph{resolution}.
If $u$ and $v$ are two resolutions of~$D$, then we say that $u < v$ if $u \neq v$ and
$u(c) \le v(c)$ for every crossing~$c$ of~$D$.
\end{definition}

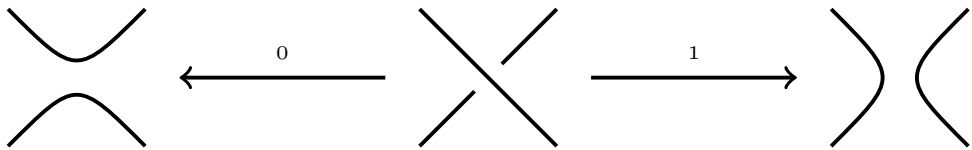
\begin{figure}
\resizebox{0.95\textwidth}{!}{\begin{tikzpicture}[cross line/.style={preaction={draw=white, -, line width=8pt}}]

\def\u{0.5cm};
\def\cdx{6*\u};

\begin{scope}[shift={(0,0)}]
\draw[thick] ( -\u,-\u) -- (\u,\u);
\draw[cross line, thick] (\u,-\u) -- (-\u,\u);
\end{scope}

\begin{scope}[shift={(\cdx,0)}]
\draw[thick, cross line, -] (-\u,-\u) .. controls (0,0) and (0,0) .. (-\u,\u);
\draw[thick, cross line, -] (\u,\u) .. controls (0,0) and (0,0) .. (\u,-\u);
\end{scope}
\draw[->, thick] (1.5*\u,0) -- (4.5*\u,0);
\draw (3*\u,0) node [anchor=south] {\fontsize{4pt}{4pt}\selectfont $1$};

\begin{scope}[shift={(-\cdx,0)}]
\draw[thick, cross line, -] (-\u,-\u) .. controls (0,0) and (0,0) .. (\u,-\u);
\draw[thick, cross line, -] (\u,\u) .. controls (0,0) and (0,0) .. (-\u,\u);
\end{scope}
\draw[->, thick] (-1.5*\u,0) -- (-4.5*\u,0);
\draw (-3*\u,0) node [anchor=south] {\fontsize{4pt}{4pt}\selectfont $0$};

\end{tikzpicture}}
\caption{The figure shows the $0$-smoothing and the $1$-smoothing
for a crossing of an unoriented link.}
\label{fig:smoothings}
\end{figure}

\begin{definition}
\label{def:cube}
Let $u$ and $v$ be resolutions of the marked link diagram~$D$.
If $u<v$ and $u$ and $v$ only differ at a single crossing, then
$L_v$ is obtained from $L_u$ by a pair-of-pants cobordism,
which we denote by $\mc G_{u,v}$.

If $u<v$ and $u$ and $v$ differ at $k$ crossings, then choose a
sequence $u=w_0<\ldots<w_k=v$, where $w_i$ and $w_{i+1}$ differ
only at one crossing. Let $\mc G_{u,v}$ denote the composition
$\mc G_{w_{k-1},w_k} \circ \ldots \circ \mc G_{w_{0},w_1}$.
This is independent of the choice of intermediate
resolutions, up to equivalence.

We call the set of all resolutions $L_u$ of $D$,
together with the cobordisms $\mc G_{u,v}$
such that $u < v$ and $u$ and $v$ only differ at a single crossing,
the \emph{cube of resolutions} of the marked diagram $D$.

We call the set of all resolutions $L_u$ of $D$,
together with the cobordisms $\mc G_{u,v}$ for $u < v$,
the \emph{full cube of resolutions} of the marked diagram $D$.
\end{definition}

We now apply the TQFT $\HFLh$ defined in Section~\ref{sec:TQFT}
to the full cube of resolutions of a diagram of a link~$L$ in~$\R^3$.

\begin{definition}
\label{def:rKhcomplex}
Let $L$ be a marked link in $\R^3$, together with a diagram $D$. We define
a complex associated to its full cube of resolutions as follows.
Given a resolution~$u$, let $P_u$ be a decoration of~$L_u$ that
consists of two points on each component of~$L_u$.
Since $(L_u, P_u)$ and $(U_{|L_u|}, P_{|L_u|})$ are diffeomorphic
as marked decorated links, we have an isomorphism
\[
\HFLh(L_u, P_u) \cong \HFLh(U_{|L_u|}, P_{|L_u|}).
\]
To every resolution $L_u$, we associate the vector space
\[
\td C_u = \HFLh(L_u, P_u). 
\]
Let $\td C$ be the $\F_2$ vector space
\[
\td C = \bigoplus_{\text{ all resolutions }u} \td C_u.
\]
If $u < v$, let $\td\de_{u,v} \colon \td C_u \to \td C_v$ denote the map
induced by the cobordism $\mc G_{u,v}$ on $\HFLh$. Let the map
$\td\de$ be
\[
\td\de = \sum_{u<v} \td\de_{u,v}.
\]
\end{definition}

\begin{lemma}
The map $\td\de$ is a differential; i.e., $\td\de^2 = 0$.
\end{lemma}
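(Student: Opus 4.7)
The plan is to exploit functoriality of $\hat\HFL$ together with the key observation built into Definition~\ref{def:cube}: the cobordism $\mc G_{u,w}$ depends (up to equivalence) only on the pair $u<w$ and not on the choice of intermediate resolutions used to factor it into pair-of-pants. In particular, for every chain $u<v<w$ one has $\mc G_{v,w}\circ\mc G_{u,v}\sim \mc G_{u,w}$ as sutured manifold cobordisms, so by functoriality (Definition~\ref{def:HFL})
\[
\td\de_{v,w}\circ \td\de_{u,v} \;=\; F_{\mc G_{v,w}}\circ F_{\mc G_{u,v}} \;=\; F_{\mc G_{u,w}}.
\]
This will reduce the whole problem to a combinatorial count.

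First I would expand $\td\de^2$ using the definition, collecting terms by their source and target:
\[
\td\de^{\,2}\;=\;\sum_{u<w}\;\Biggl(\sum_{u<v<w} \td\de_{v,w}\circ\td\de_{u,v}\Biggr).
\]
Fix $u<w$ and let $k=k(u,w)$ be the number of crossings at which the resolutions $u$ and $w$ disagree. Intermediate resolutions $v$ with $u<v<w$ correspond bijectively to nonempty proper subsets of this set of $k$ crossings, so there are exactly $2^k-2$ of them. Applying the functoriality identity above, the inner sum equals
\[
(2^k-2)\cdot F_{\mc G_{u,w}},
\]
which vanishes over $\F_2$ because $2^k-2$ is even for every $k\ge 1$. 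Summing over all pairs $u<w$ gives $\td\de^{\,2}=0$.

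The only step that requires any care is the invocation $\mc G_{v,w}\circ\mc G_{u,v}\sim \mc G_{u,w}$; this is precisely the independence statement in Definition~\ref{def:cube}, which holds because the elementary saddles occur at disjoint crossings and can therefore be reordered by an ambient isotopy of $S^3\times I$, making any two factorizations equivalent as decorated link cobordisms. Once this is in hand, the combinatorial step $2^k-2\equiv 0\pmod{2}$ is immediate, so there is no substantive obstacle; the essential content of the lemma is that the ``full cube'' differential is a sum of compositions whose intermediate contributions cancel in pairs over $\F_2$ thanks to functoriality.
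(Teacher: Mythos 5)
Your proposal is correct and follows essentially the same route as the paper: expand $\td\de^2$, use functoriality (and the independence of $\mc G_{u,z}$ from the chosen factorization, as noted in Definition~\ref{def:cube}) to rewrite each composition as $\td\de_{u,z}$, and observe that the $2^k-2$ intermediate resolutions give an even number of identical terms, which cancel over $\F_2$. The only difference is that you spell out the equivalence $\mc G_{v,w}\circ\mc G_{u,v}\sim\mc G_{u,w}$ explicitly, which the paper leaves implicit in its definition of the full cube of resolutions.
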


\begin{proof}
The proof is immediate as we are working with $\F_2$ coefficients.
Notice that
\[
\td\de^2 = \sum_{w<z} \td\de_{w,z} \circ \sum_{u<v} \td\de_{u,v} =
\sum_{u<v=w<z} \td\de_{w,z} \circ \td\de_{u,v} =
\sum_{u<v=w<z} \td\de_{u,z} =
\sum_{u<z} \sum_{\substack{v\\u<v<z}} \td\de_{u,z}.
\]
The set $\left\{\,v\,\colon\, u<v<z\,\right\}$ has cardinality
$2^k-2$, where $k$ is the number of crossings
where $u$ is different from $z$. Since we are working over $\F_2$,
we immediately obtain that $\td\de^2=0$.
\end{proof}

The complex $(\td C,\td\de)$ is also filtered, as we explain in the next definition.

\begin{definition}
\label{def:rKhfilteredcomplex}
Let $L$ be a marked link in $\R^3$, together with a marked diagram $D$.
Consider the complex $(\td C, \td\de)$ from Definition~\ref{def:rKhcomplex}.
For $i \in \Z$, let
\begin{equation*}
  \td C_i =
  \bigoplus_{\substack{\mbox{\scriptsize{resolutions $u$ with}}\\
                     \mbox{\scriptsize{$i$ $0$-smoothings}}
                     }}
        \td C_u,
\end{equation*}
and define the filtration
\begin{equation}
\label{eq:filtration}
\mc F_p (\td C) = \bigoplus_{i \le p} \td C_i.
\end{equation}
Then $\left(\bigoplus_{i \in \Z} \td C_i, \td\de\right)$
is a graded filtered complex, which we call the \emph{reduced Khovanov filtered
complex} associated to the marked diagram $D$.
\end{definition}

Strictly speaking, the complex $(\td C, \td\de)$ is not graded,
because the differential does not respect the grading. However,
we use the term \emph{graded filtered} to underline the fact that
the filtration comes from a grading on $\td C$; cf.~equation~\eqref{eq:filtration}.

As explained for instance by McCleary~\cite{mccleary}, a filtered complex
yields a spectral sequence whose $E^2$ page is the homology of
the associated graded complex. Hence, by Corollary~\ref{cor:TQFT},
in our case the $E^2$ page is the reduced Khovanov homology of~$L$.

\begin{remark}
The definition of the reduced Khovanov filtered complex actually does not require
any Floer theory. It can also be defined using the formalism of
Bar-Natan~\cite{bar2002khovanov} or Khovanov~\cite{kh}.
The vector space $\td C_u$ can be defined as
\[
\td C_u = V^{\otimes |L_u|} /\left(\langle v_- \rangle \otimes V^{\otimes (|L_u| - 1)}\right),
\]
where $v_-$ is associated to the marked component of $L_u$.
The cobordism maps are given by the reduced Khovanov TQFT, which
we can denote either by $\rKh$ or $\HFLh$, and whose definition does not
require any Floer theory.
\end{remark}

In a similar way as above, we can also define an unreduced \emph{Khovanov
filtered complex} for a link diagram by applying the usual Khovanov
TQFT~\cite{bar2002khovanov} to the full cube of resolutions in
Definition~\ref{def:cube}.

\begin{definition}
\label{def:Khcomplex}
Let $L$ be a link in $\R^3$, together with a diagram $D$. We define
a complex associated to its full cube of resolutions as follows.
To every resolution $L_u$, associate the vector space $C_u \cong V^{\otimes{|L_u|}}$,
where $|L_u|$ denotes the number of components of $L_u$.
Let $C$ be the $\F_2$ vector space
\[
C = \bigoplus_{\text{all resolutions }u} C_u.
\]
If $u < v$, let $\de_{u,v}:C_u \to C_v$ denote the map
induced by applying the Khovanov TQFT to the cobordism $\mc G_{u,v}$.
The map
\[
\de= \sum_{u<v}\de_{u,v}
\]
is a differential on $C$; i.e., $\de^2 = 0$.
\end{definition}

Again, the complex $(C,\de)$ comes with a filtration.

\begin{definition}
\label{def:Khfilteredcomplex}
Let $L$ be a link in $\R^3$, together with a diagram $D$.
Consider the complex $(C, \de)$ from Definition~\ref{def:Khcomplex}.
For $i \in \Z$, let
\begin{equation*}
  C_i =
  \bigoplus_{\substack{\mbox{\scriptsize{resolutions $u$ with}}\\
                     \mbox{\scriptsize{$i$ $0$-smoothings}}
                     }}
        C_u,
\end{equation*}
and define the filtration
\begin{equation*}
\mc F_p (C) = \bigoplus_{i \le p} C_i.
\end{equation*}
Then $\left(\bigoplus_{i \in \Z} C_i, \de\right)$
is a graded filtered complex, which we call the \emph{Khovanov filtered
complex} associated to the diagram $D$.
\end{definition}

The $E^2$ page of the spectral sequence arising from the Khovanov
filtered complex is by definition the Khovanov homology of $L$.

It follows from the work of Baldwin, Hedden, and Lobb~\cite{BHL} that the spectral
sequences above are (marked) link invariants. We give here an
elementary proof of the fact that the spectral sequences
are (marked) link invariants \emph{up to isomorphism}, based on the
proof of the invariance of Khovanov homology under Reidemeister moves~\cite{bar2002khovanov}.

\begin{theorem} \label{thm:ss}
The spectral sequence defined by the Khovanov filtered complex
is an invariant of the link up to isomorphism.
\end{theorem}

\begin{proof}
What we need to check is that for each Reidemeister move (R1), (R2), and (R3),
as defined in~\cite{bar2002khovanov}, there is a morphism of filtered
chain complexes that induces an isomorphism on all pages.

Let $(C, \de)$ denote the Khovanov filtered
complex. The grading allows us to split the differential as
\[
\de = \de^h_1 + \de^h_2 + \ldots,
\]
where $\de^h_i$ denotes the component of the differential $\de$ which is
homogeneous of degree~$i$.

It is known that one can cancel a horizontal arrow in a graded filtered complex
without changing the filtered chain homotopy type of the complex;
see for example Krcatovich~\cite{Krcatovich} or Hedden and Ni \cite[Lemma~4.2]{hedden2013khovanov}.
The following lemma can be seen as a generalisation of this cancellation result.

\begin{lemma}
\label{lem:cancellation}
Let $(C = \bigoplus_{i \in \Z} C_i, \de)$ be a graded filtered complex.
Suppose that $b$ and $c$ are homogeneous elements of $C$, and that $C'' \subset C$
is a vector subspace spanned by a homogeneous basis, such that
\begin{itemize}
\item as vector spaces, $C = C' \oplus C''$, where $C' = \langle b,c \rangle$,
\item $\mc F(b) - \mc F(c) = \mc F(b) - \mc F(\de b)$, and denote this difference by~$k$,
\item if $\langle\de a, c \rangle =1$ for $a \in C''$, then $\mc F(a) -\mc F(c) \geq k$, and
\item $\textnormal{pr}_{C'}(\de b) = c$.
\end{itemize}
Let $\pi \colon C \to \quotient{C}{C'}$ and $i \colon \quotient{C}{C'} \to C''\subseteq C$
be the projection and inclusion maps. Let $h \colon C \to C$ denote the map such that $h|_{C''} = 0$,
$h(b)=0$, and $h(c)=b$, and let $\bar \de = \pi \circ (\de + \de h \de) \circ i$.
Then $(\quotient{C}{C'}, \bar\de)$ is graded filtered,
and the map
\[
f:= \pi \circ (\id + \de h) \colon C \to \quotient{C}{C'}
\]
induces isomorphisms $E^r_{p,q}(C) \cong E^r_{p,q}(\quotient{C}{C'})$ for all $r > k$.
\end{lemma}

By $\langle x, b\rangle$ and $\langle x, c\rangle$ we mean the unique elements of $\mb F_2$ such that
\[
\textnormal{pr}_{C'}(x) = \langle x, b\rangle b + \langle x, c\rangle c.
\]

\begin{proof}
The fact that $(\quotient{C}{C'}, \de)$ is graded filtered follows
from \cite[Lemmas~4.1 and~4.2]{hedden2013khovanov}.
The grading on $\quotient{C}{C'}$ is defined by
\[
\left(\quotient{C}{C'}\right)_i = \quotient{C_i}{C'_i}.
\]

Under the hypotheses of the lemma, the maps $f$ and
\[
g := (\id + h \de) \circ i \colon \quotient{C}{C'} \to C'' \subseteq C
\]
are filtered maps of degree 0, and they commute with the differential.
Therefore, they induce morphisms of spectral sequences.

Since $h^2=0$, $h \circ i=0$, and $\pi \circ h=0$,
it is straightforward to check that $f \circ g = \id_{\quotient{C}{C'}}$.
Therefore, we focus on the map $g \circ f$,
and prove that it induces the identity map on all pages $E^r$ with $r > k$.
Recall that the pages of the spectral sequence are defined as follows:
\[
E^r_{p,q} = \frac{Z^r_{p,q}}{Z^{r-1}_{p-1, q+1} + B^{r-1}_{p, q}}, \text{ where}
\]
\begin{align*}
Z^r_{p,q} &= \mc F_p(C_{(p+q)}) \cap \de^{-1} (\mc F_{p-r}(C_{(p+q-1)})),\\
B^r_{p,q} &= \mc F_p(C_{(p+q)}) \cap \de(\mc F_{p+r}(C_{(p+q+1)})).
\end{align*}
The grading on $C$ within brackets is the homological grading. We use this notation to distinguish it from the grading that yields the filtration. In the rest of the proof we will drop the homological grading to keep the notation simple.

Let $x = a + \lambda b + \mi c \in C$, with $a \in C''$ and $\lambda$, $\mi \in \mb F_{2}$.
Suppose that $x \in Z^r_p$. We claim that $g \circ f (x) + x \in Z^{r-1}_{p-1} + B^{r-1}_{p}$.
A computation shows that
\[
g \circ f (x) + x = \mi \de b + \langle \de x, c \rangle b.
\]
If $\mi \neq 0$, then $p \geq \mc F(c)$, and therefore $b \in \mc F_{p+k}(C)$.
Since $r > k$, we have $\de b \in B^{r-1}_p$.
For the term $\langle \de x, c \rangle b$, suppose that $\langle \de x, c \rangle \neq 0$.
Then $p-r \geq \mc F(c)$, since $x \in Z^r_p$.
Thus $p > \mc F(c) + k = \mc F(b)$, so $b \in \mc F_{p-1}(C)$,
and therefore $\langle \de x, c \rangle b \in Z^{r-1}_{p-1}$.
\end{proof}

The following is an analogue of \cite[Lemma~3.7]{bar2002khovanov}.
We say that a subcomplex $C' \subseteq C$ of a graded filtered complex is
a \emph{graded filtered subcomplex} if $C' = \bigoplus_{i \in \Z} C'_i$,
where $C'_i \subseteq C_i$, and $\de C' \subseteq C'$.

\begin{lemma}
\label{lem:bn}
Let $C' \subseteq C$ be a graded filtered subcomplex of a graded filtered complex $(C, \de)$,
and suppose that $\de^h_0 =0$ and $\H^*(C', \de^h_1) = 0$ (we say that $C'$ is \emph{$\de^h_1$-acyclic}).
Then $\quotient{C}{C'}$ is also a graded filtered complex, and the quotient map
$C \to \quotient{C}{C'}$ induces an isomorphism on all pages $E^r$ with $r \geq 2$.
\end{lemma}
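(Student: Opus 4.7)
The strategy is to compare the spectral sequences of $C'$, $C$, and $\quotient{C}{C'}$ via the long exact sequences arising from the short exact sequence of graded filtered complexes
\[
0 \longrightarrow C' \longrightarrow C \xrightarrow{\,\,p\,\,} \quotient{C}{C'} \longrightarrow 0,
\]
and then to use the hypothesis to show that $C'$ contributes nothing from the $E^2$ page onward.

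First I would verify that $\quotient{C}{C'}$ inherits a natural structure of graded filtered complex. Because $C' = \bigoplus_i C'_i$ with $C'_i \subseteq C_i$, the grading descends to $\quotient{C}{C'} = \bigoplus_i \quotient{C_i}{C'_i}$, the filtration $\mc F_p(\quotient{C}{C'})$ is defined as the image of $\mc F_p(C)$, and the condition $\de(C') \subseteq C'$ guarantees that $\de$ descends to a filtration-preserving differential. Hence $p$ is tautologically a morphism of graded filtered complexes, and in particular induces morphisms of spectral sequences $p_* \colon E^r(C) \to E^r(\quotient{C}{C'})$ compatible with all differentials $d^r$.

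Next I would appeal to the standard exact couple formalism: a short exact sequence of filtered complexes induces, on every page $r \ge 1$, a long exact sequence
\[
\cdots \longrightarrow E^r(C') \longrightarrow E^r(C) \xrightarrow{\,\,p_*\,\,} E^r(\quotient{C}{C'}) \longrightarrow E^r(C') \longrightarrow \cdots
\]
commuting with $d^r$; see for instance~\cite{mccleary}. Using the identification $E^{i+1} \cong \H^*(\cdot, \de|_i)$ recorded earlier in the excerpt, the hypothesis $\H^*(C', \de|_1) = 0$ says precisely that $E^2(C') = 0$. Since each page $E^{r+1}$ is a subquotient of the previous page $E^r$, this vanishing propagates to $E^r(C') = 0$ for all $r \ge 2$. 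The long exact sequence then forces $p_* \colon E^r(C) \to E^r(\quotient{C}{C'})$ to be an isomorphism for every $r \ge 2$, yielding the claimed isomorphism of spectral sequences.

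The main (and really only) subtle point is to check that the conventions used here for the spectral sequence of a singly-graded complex with a differential of \emph{variable} (strictly positive) filtration degree genuinely fit into the standard exact couple machinery; once this is confirmed, the argument is essentially automatic. An alternative, more hands-on route, modelled on Bar-Natan's original cancellation argument in~\cite{bar2002khovanov}, would be to construct explicit chain homotopy equivalences realising the isomorphism page by page, but this seems unnecessary given the clean input $E^2(C') = 0$.
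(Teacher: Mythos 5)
Your setup of the quotient complex, and your observation that the hypothesis says exactly $E^2(C')=0$ (hence $E^r(C')=0$ for all $r\ge 2$, since later pages are subquotients), are fine. The problem is the tool you lean on: a short exact sequence of filtered complexes does \emph{not} induce a long exact sequence of $E^r$ pages for every $r$, and this is not in McCleary. What is true is that, because the filtrations on $C'$ and $\quotient{C}{C'}$ are the induced ones (here they even come from gradings), the sequence of associated graded complexes
\[
0 \longrightarrow E^1(C') \longrightarrow E^1(C) \longrightarrow E^1\bigl(\quotient{C}{C'}\bigr) \longrightarrow 0
\]
is a short exact sequence of chain complexes with respect to $d^1$, and its homology long exact sequence relates the $E^2$ terms. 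Beyond that, the three-term sequences of pages are in general not exact, and the connecting morphisms (whose filtration shift would have to grow with $r$) do not exist: spectral sequences do not form an abelian category. So the step ``the long exact sequence then forces $p_*\colon E^r(C)\to E^r(\quotient{C}{C'})$ to be an isomorphism for every $r\ge 2$'' is unjustified for $r\ge 3$ as written; this is the genuine gap, and it is not the convention-checking issue you flag at the end.

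The repair is small, after which your argument is correct but still organized differently from the paper's. From the genuine long exact sequence at the $E^2$ level and $E^2(C')=\H^*(C',\de|_1)=0$ you get that $p_*$ is an isomorphism on $E^2$; then the standard comparison theorem --- a morphism of spectral sequences that is an isomorphism on some page is an isomorphism on all subsequent pages --- gives the isomorphism for all $r\ge 2$. That comparison theorem is the statement you should cite instead. The paper avoids any claim about exactness of spectral-sequence pages altogether: it uses that the filtration comes from a grading to identify $E^{i+1}$ with $\H^*(\cdot,\de|_i)$, observes that $\de|_1$-acyclicity of $C'$ implies $\de|_i$-acyclicity for every $i$, and then applies the ordinary long exact sequence in homology to the short exact sequence of honest (unfiltered) complexes $(C',\de|_i)\subseteq (C,\de|_i)$ for each $i$ separately.
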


\begin{proof}
We argue by induction on $\dim (C')$. If this is $0$, then there is nothing to prove. Suppose that $\dim (C') = 2m >0$.
Let
\[
s := \min\left\{\,p\,\colon\,C_p' \neq 0 \,\right\},
\]
and choose $c \in C_s'$. By $\de^h_1$-acyclicness, there exists a $b \in C_{s+1}'$ such that $\de^h_1(b) = c$.
Since $C'$ is a subcomplex, this also implies that $\de b = c$.
We now apply Lemma~\ref{lem:cancellation} with $k=1$.
Since $\de b = c$, we have that $\pi \circ (\id+ \de h)=\pi$,
and therefore the differential $\bar \de$ is induced by $\de$ on the quotient.
Thus, the quotient map $\pi$ induces an isomorphism on all pages $E^r$ with $r > 1$.
Furthermore, $\pi(C')$ is still $\de^h_1$-acyclic,
because $\pi$ is a $\de^h_1$ chain homotopy equivalence
by applying Lemma~\ref{lem:cancellation} to $(C', \de^h_1)$ with $r=2$.

To conclude the proof, note that $\dim(\pi(C')) = 2(m-1)$,
and that the composition of all the quotient maps gives the quotient map $C \to \quotient{C}{C'}$.
\end{proof}

We now prove invariance under Reidemeister moves.
\renewcommand{\qedsymbol}{}
\end{proof}

\subsubsection*{Invariance under (R1)}
Consider Bar-Natan's proof in \cite[Section~3.5.1]{bar2002khovanov}.
The subcomplex $\mc C'$ defined by Bar-Natan is actually a graded filtered
subcomplex of $\mc C$. The fact that $\mc C'$ is $\de^h_1$-acyclic was
already observed there by Bar-Natan.

By Lemma~\ref{lem:bn}, we can consider the graded filtered complex
$\quotient{\mc C}{\mc C'}$, which is isomorphic (up to a grading shift)
to the graded filtered complex $\llbracket D \rrbracket$, where $D$ is the
diagram of the link without the curl arising from the Reidemeister move (R1).
As in the proof by Bar-Natan, the grading shift is adjusted by defining the
complex $\mc C(L)$ from $\llbracket L \rrbracket$.

\subsubsection*{Invariance under (R2)}
We follow the second proof of the invariance under (R2), from
\cite[Sections~3.5.2 and~3.5.4]{bar2002khovanov}.
The complex $\mc C'$ is again a graded filtered subcomplex. Bar-Natan
proves that it is $\de^h_1$-acyclic, so, by Lemma~\ref{lem:bn}, we
can focus on $\quotient{\mc C}{\mc C'}$.

What we need to check next is that the subcomplex $\mc C''' \subseteq \quotient{\mc C}{\mc C'}$,
as defined by Bar-Natan, is a $\de^h_1$-acyclic graded filtered complex.
The map $d_{\star0} \circ \Delta^{-1}$ is homogeneous, so $C'''$ is
a graded \emph{subspace} of $\quotient{C}{C'}$.
Bar-Natan already proved that $C'''$ is $\de^h_1$-acyclic, so
we only need to prove that it is closed under the differential
$\de$. We need to check this on the elements of the form $\alpha$
or $(\beta, \tau(\beta))$, using the notation of \cite[Section~3.5.4]{bar2002khovanov}.
\begin{itemize}
\item{Consider an element of the form $\alpha$ in $\mc C'''$.
Any component $\mc G$ of the differential~$\de$ that does not contain
the maps $\Delta$ and $d_{\star0}$ sends $\alpha$ to some $\alpha' \in \mc C'''$.
On the other hand, the other components of $\de$ are sums of
maps that can be written as $(\Delta + d_{\star0}) \circ \mc G$,
where $\mc G$ is the composition of some other edge maps. As
already seen, $\mc G (\alpha) = \alpha' \in \mc C'''$, and the
fact that $(\Delta + d_{\star0})(\alpha') \in \mc C'''$ was
already observed by Bar-Natan.}
\item{We now show that $\de(\beta,\tau(\beta)) \in \mc C'''$.
We prove that the subspace $\left\{(\beta, \tau(\beta))\right\}$
is invariant under any edge map. If $\mc G$ is an edge map, we need
to check that $\mc G \circ \tau = \tau \circ \mc G$.
Since $\Delta$ is an isomorphism, this is equivalent to proving
that $\mc G \circ \tau \circ \Delta = \tau \circ \mc G \circ \Delta$.
The fact that $\Kh$ is a TQFT implies that the edge maps commute,
so we have
\begin{align*}
\mc G \circ \tau \circ \Delta &= \mc G \circ (d_{\star0} \circ \Delta^{-1}) \circ \Delta \\
&= \mc G \circ d_{\star0} \\
&= d_{\star0} \circ (\Delta^{-1} \circ \Delta) \circ \mc G \\
&= (d_{\star0} \circ \Delta^{-1}) \circ \mc G \circ \Delta \\
&= \tau \circ \mc G \circ \Delta.
\end{align*}
}
\end{itemize}

\subsubsection*{Invariance under (R3)}
In the same way as we did for (R2), one can prove that the
complexes $\mc C'$ and $\mc C'''$ are also $\de^h_1$-acyclic
graded filtered subcomplexes in this case.
To conclude the proof, it is sufficient to note that, since the
map $\Upsilon$ defined by Bar-Natan commutes with all edge maps,
it also commutes with $\de$, so it is an isomorphism of graded
filtered chain complexes. \qed

\subsection{Final remarks}

The spectral sequence arising from the Khovanov filtered complex is an instance of
a \emph{Khovanov-Floer theory}, as defined by Baldwin, Hedden, and Lobb~\cite{BHL}.
Their theory not only implies that the spectral sequence is a link invariant up to
isomorphism, but also achieves naturality and functoriality under link cobordisms.

A result analogous to Theorem~\ref{thm:ss} also holds for marked links and the reduced
Khovanov filtered complex.
The proof is obtained by adapting the proof of Theorem~\ref{thm:ss}. First, notice that
in order to go from a pointed diagram $D$ to another pointed diagram $D'$ of a marked
link $L$, it is sufficient to use Reidemeister moves that do not cross the basepoint:
As explained by Hedden and Ni~\cite[page 3032]{hedden2013khovanov}, when one encounters a Reidemeister
move that crosses the basepoint, one can trade it for a sequence of Reidemeister moves
that do not cross it by pulling the string in the other direction, and letting it pass
over the point at infinity.
Recall that the reduced Khovanov complex is defined by quotienting the Khovanov complex
by $\langle v_-\rangle$, where the $v_-$ is associated to the marked circle in each resolution.
In order to check that the Reidemeister moves that do not cross the basepoint induce
filtered chain isotopies between the reduced Khovanov complexes, we need to check the
following two things:
\begin{itemize}
\item{the maps of filtered complexes defined in the sections on the invariance under
Reidemeister moves send $\langle v_- \rangle$ to $\langle v_- \rangle$;}
\item{if $\td{\mc C'}$ and $\td{\mc C'''}$ denote the quotients of the complexes $\mc C'$
and $\mc{C'''}$ by $\langle v_-\rangle$, we need to check that $\td{\mc C'}$ and
$\td{\mc C'''}$ are $\td\partial^h|_1$-acyclic.}
\end{itemize}
Both points can be checked to be true for any Reidemeister move that happens away from
the basepoint.

The spectral sequences above split along the quantum grading,
because all the differentials that we consider are $q$-homogeneous.
So, actually, one gets a spectral sequence for each $q$-grading,
and this is a (marked) link invariant.

It is straighforward from the definition of the (reduced) Khovanov
filtered complex that the spectral sequences above are trivial on
$\Kh$-thin knots. The page $E^2$ is indeed supported along two
diagonals in the $(p,q)$ grading, so there is no room for non-trivial
higher differentials.

With $\F_2$ coefficients, the spectral sequence associated to the
reduced Khovanov complex does not depend on the basepoint (the same
happens for the usual reduced Khovanov homology).
If $u$ is a resolution of the diagram $D$, let $V(u)$ be the free
vector space generated by the circles in the resolution. Call the
obvious generators $X_1, \ldots, X_n$. (Here, it is more
convenient to use Khovanov's notation~\cite{kh}. The vector $X_i$ can be
translated to Bar-Natan's notation~\cite{bar2002khovanov} as the $v_-$ corresponding to
the $i$-th circle of $u$.) At the $u$-vertex of the Khovanov complex,
one finds the vector space $\bigwedge V(u)$. (In Bar-Natan's notation,
$X_1 \wedge X_2 \wedge \cdots \wedge X_j$ corresponds to
$v_-^{(1)} \otimes \cdots \otimes v_-^{(j)} \otimes v_+^{(j+1)} \otimes \cdots \otimes v_+^{(n)}$.)
If $W$ is the subspace of $V$ spanned by the differences (or sums) of the
generators, then the subcomplex $\bigwedge W \subseteq \bigwedge V(u)$
is isomorphic to the usual reduced Khovanov complex $\bigwedge V(u)/\langle X_n \rangle$
(here, we assume that the marked component is the $n$-th one.)
The isomorphism $\bigwedge V(u)/\langle X_n \rangle \rightarrow \bigwedge W$
is given by
\[
X_{i_1} \wedge \cdots \wedge X_{i_k} \mapsto (X_{i_1} - X_n) \wedge \cdots \wedge (X_{i_k} - X_n).
\]
This isomorphism preserves the filtration and commutes with the
edge maps, so it induces an isomorphism of spectral sequences.
This point of view has been communicated to us by Andrew Lobb, and
was firstly observed by Ozsv\'ath and Szab\'o~\cite{bdcs}.

The fact that the spectral sequence is an invariant implies
that, in particular, each page of the spectral sequence is a
bigraded vector space $\Kh_{p,q}^r$ that is a link invariant
for every $r \in \{\,2,3,\ldots,\infty\,\}$.
The abutment of this spectral sequence is unknown. A trivial
lower bound to the dimension of $\Kh_{*,*}^\infty$ is the number
of non-zero coefficient of the Jones polynomial, but this is
a rough bound.
We are aware that Baldwin and Lobb ran a simulation to explore
the limit of this spectral sequence, but could not find
any higher differentials. We therefore conclude with their
conjecture, based on their data.

{\renewcommand{\theproposition}{\ref{conj}}
\begin{conjecture}[{Baldwin-Lobb}]
The limit $\Kh_{p,q}^\infty$ of the spectral sequence is
Khovanov homology $\Kh_{p,q}=\Kh_{p,q}^2$.
\end{conjecture}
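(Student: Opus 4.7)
The plan is to show that the tail $\partial - \partial_1^h = \sum_{r \geq 2} \partial_r^h$ of the Khovanov filtered differential is nullhomotopic as an operator on the filtered complex $(C, \partial_1^h)$; this is equivalent to the conjectured collapse, and it automatically preserves the $(p,q)$-bigrading because each $\partial_r^h$ is $q$-homogeneous. Working over $\F_2$ is crucial: a direct computation in the reduced Khovanov Frobenius algebra gives $\mu \circ \Delta(v_+) = 2 v_- = 0$ and $\mu \circ \Delta(v_-) = 0$, so by Theorem~\ref{thm:main} any component of a composite cobordism $\mathcal{G}_{u,v}$ carrying a handle is sent to zero under the TQFT. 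This removes the most delicate topological contributions at the outset.

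First I would reduce to the genus-zero case. For each pair $u < v$ with $|v|-|u| = k$, the cobordism $\mathcal{G}_{u,v}$ is sent by the reduced Khovanov TQFT to a map determined by the abstract topology of the underlying surface. Decomposing this surface as a union of elementary merges and splits, any positive-genus contribution vanishes, leaving only planar trees of saddles. On each $2$-face of the cube I would then build an explicit local homotopy $H_{u,v}$ satisfying $\partial_1^h H_{u,v} + H_{u,v} \partial_1^h = \partial_{u,v}$ using the Frobenius identity $(\id \otimes \mu) \circ (\Delta \otimes \id) = \Delta \circ \mu$, and bootstrap to higher $r$ by induction on $r$, factoring any length-$r$ composite cobordism through already-treated length-$(r-1)$ pieces.

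The main obstacle will be global coherence: the local pieces $\{H_{u,v}\}$ must assemble into a single filtered chain homotopy on all of $C$, not merely face-by-face. This compatibility is equivalent to the vanishing of an infinite tower of Massey-type operations on $\Kh(L;\F_2)$ defined by the cobordism TQFT, and it is precisely here that the conjecture is delicate. A promising route is to perform the construction at the tangle level inside Bar-Natan's tangle calculus \cite{bar2002khovanov}: build the homotopies on basic tangle pieces (crossings and their two resolutions) and let the gluing axioms of the universal TQFT propagate them to the full cube, so that compatibility is forced by topological composition rather than by Frobenius-algebra accidents. Baldwin and Lobb's failure to detect a nontrivial higher differential is strong computational evidence that such a coherent system of tangle-level homotopies exists, but producing one conceptually — and thereby ruling out all higher Massey-type obstructions at once — is the crux of the problem.
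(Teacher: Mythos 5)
This statement is not a theorem of the paper but an open conjecture (attributed to Baldwin and Lobb, supported only by their computer experiments), so there is no proof in the paper to compare against; the question is whether your proposal actually closes it, and it does not. The genuine gap is the one you yourself flag at the end: the entire content of the conjecture is the \emph{global} statement that the filtered complex $(C,\partial)$ can be coherently simplified so that no differential survives past the $E^2$ page, and your argument defers exactly this point. Constructing a homotopy $H_{u,v}$ on each $2$-face, and then ``bootstrapping by induction on $r$,'' does not prove collapse unless the local homotopies satisfy compatibility (side) conditions allowing them to be assembled into a filtered gauge transformation of $(C,\partial)$; this is precisely the vanishing of the tower of Massey-type obstructions you mention, for which no argument is given. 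Proposing to force coherence via Bar-Natan's tangle calculus is a research programme, not a proof step: the delooping/gluing formalism does not by itself guarantee that the homotopies chosen on elementary tangles compose consistently over an arbitrary cube, and the paper's only evidence for the conjecture is computational. Relatedly, your opening claim that nullhomotopy of the tail $\sum_{r\ge 2}\partial_r^h$ with respect to $\partial_1^h$ is ``equivalent'' to collapse at $E^2$ needs justification: a bare identity $\partial_1^h H + H\partial_1^h=\partial-\partial_1^h$ does not immediately kill the induced differentials on higher pages unless $H$ can be fed into a perturbation-lemma argument, which again requires the very coherence you have not established.

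A smaller but real inaccuracy: the observation that $\mu\circ\Delta=0$ over $\F_2$ only kills components of $\mathcal{G}_{u,v}$ of positive genus, and these are not where the difficulty lies. The potentially nontrivial higher terms of the differential come from genus-zero composites such as a split followed by a merge involving different circles ($\Delta$ followed by $\mu$ on distinct factors), which are nonzero on the chain level; so the reduction ``to planar trees of saddles'' does not remove the delicate contributions, it isolates them. In short, the proposal is a plausible strategy outline whose decisive step --- producing a coherent system of homotopies, or equivalently showing all higher Massey-type operations vanish for every diagram --- is exactly the open problem, so the conjecture remains unproved by this argument.
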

\addtocounter{proposition}{-1}
}

The referee pointed out that the second page and third pages
of the spectral sequence are isomorphic.
Unfortunately, we were not able to generalise their proof to show the above conjecture.
 We now outline the referee's proof.

\begin{lemma}
\label{lem:E2}
Let $(C = \bigoplus_{p \in \Z} C_p, \de)$ be a graded filtered complex such that $\de^h_0 = 0$. Then
\[
E^3_{p,q} = \H_q(E^2_{p, *}, [\de^h_2]).
\]
\end{lemma}

\begin{proof}
We drop the homological grading $q$ to simplify the notation.
By \cite[Theorem~2.6]{mccleary}, we have $E^3_p = \H_*(E^2_p, [\de])$, where the map
\[
[\de] \colon E^2_p = \frac{Z^2_p}{Z^1_{p-1} + B^1_p} \to \frac{Z^2_{p-2}}{Z^1_{p-3} + B^1_{p-2}} = E^2_{p-2}
\]
is induced by the differential $\de$. Since $\de^h_0 = 0$, we have $Z^1_{p-1} = \mc F_{p-1}(C)$,
therefore any element $[x] \in E^2_p$ can be represented by some homogeneous element
$x \in C_p \cap Z^2_p$. As $x \in Z^2_p$, we have that
\[
[\de] \left([x]\right) = [\de^h_2(x) + \de^h_3(x) + \ldots] = [\de^h_2(x)],
\]
where the last equality follows from the fact that $Z^1_{p-3} = \mc F_{p-3}(C)$. Thus
\[
E^3_p = \H_*(E^2_p, [\de]) = \H_*(E^2_p, [\de^h_2]). \qedhere
\]
\end{proof}

\begin{proposition}
For every knot $K$, we have $\Kh^2_{p,q}(K) \cong \Kh^3_{p,q}(K)$.
\end{proposition}

\begin{proof}
Let $C = \bigoplus_{r \in \{0,1\}^n} C_r$. Given a resolution $r$ and an integer $i \in [1,n]$,
denote by $r+e_i$ the function where $e_i(j) = \delta_{ij}$. Define
\[
\xi_{r,i} =
\begin{cases}
0 & \text{if $r(1) + \dots + r(i-1)$ is even, and}\\
1 & \text{otherwise,}
\end{cases}
\]
and let $H \colon C \to C$ be the map defined by the formula
\[
H(a_r) = \sum_{i=1}^n \de_{r, r + e_i}(a_r) \cdot \xi_{r,i}
\]
for $a_r \in C_r$, where we set $\de_{r, r + e_i} = 0$ if $r + e_i$
is not a resolution; i.e., if $(r + e_i)(i)=2$.

Then $\de^h_2$ is a chain map from the complex $(C, \de^h_1)$ to itself,
and it is null-homotopic, because a computation shows that
\[
\de^h_2 = H \de^h_1 + \de^h_1 H.
\]
In view of Lemma~\ref{lem:E2}, we have $\Kh^2_{p,q} \cong \Kh^3_{p,q}$.
\end{proof} 

\bibliographystyle{amsalpha}
\bibliography{bibliography}

\end{document}